\theoremstyle{plain}
\newtheorem{theorem}{Theorem}
\newtheorem{lemma}[theorem]{Lemma}
\newtheorem{proposition}[theorem]{Proposition}
\newtheorem{corollary}[theorem]{Corollary}
\newtheorem{conjecture}[theorem]{Conjecture}
\theoremstyle{definition}
\theoremstyle{remark}
\newtheorem{remark}[theorem]{Remark}
\newcommand{\N}{\mathbb{N}}
\newcommand{\R}{\mathbb{R}}
\renewcommand{\P}{\mathbb{P}}
\newcommand{\E}{\mathbb{E}}
\newcommand{\e}{\operatorname{e}} 
\renewcommand{\d}{\mathop{}\!\mathrm{d}}
\newcommand{\map}{\mathfrak{m}}
\newcommand{\Map}{\mathrm{M}}
\newcommand{\Triv}{\mathrm{T}}
\newcommand{\Tree}{\mathfrak{T}}
\newcommand{\Mapb}{\mathbb{M}}
\newcommand{\face}{\mathrm{f}}
\newcommand{\defect}{\mathrm{d}}
\newcommand{\gen}{\mathrm{g}}
\newcommand{\ver}{\mathrm{v}}
\newcommand{\spar}{\mathrm{s}}
\DeclareMathOperator{\Core}{Core}
\DeclareMathOperator{\Ker}{Ker}
\DeclareMathOperator{\Defect}{Defect}
\newcommand{\kernel}{\mathfrak{K}}
\newcommand{\cv}[1][n]{\enskip\mathop{\longrightarrow}^{}_{#1 \to \infty}\enskip}
\newcommand{\cvloi}[1][n]{\enskip\mathop{\longrightarrow}^{(d)}_{#1 \to \infty}\enskip}
\newcommand{\cvproba}[1][n]{\enskip\mathop{\longrightarrow}^{\P}_{#1 \to \infty}\enskip}
\newcommand{\equi}[1][n]{\enskip\mathop{\sim}^{}_{#1 \to \infty}\enskip}
\DeclarePairedDelimiter\floor{\lfloor}{\rfloor}
\let\originalleft\left
\let\originalright\right
\renewcommand{\left}{\mathopen{}\mathclose\bgroup\originalleft}
\renewcommand{\right}{\aftergroup\egroup\originalright}
\DeclareSymbolFont{extraup}{U}{zavm}{m}{n}
\DeclareMathSymbol{\vardspade}{\mathalpha}{extraup}{81}
\DeclareMathSymbol{\varheart}{\mathalpha}{extraup}{86}
\DeclareMathSymbol{\vardiamond}{\mathalpha}{extraup}{87}
\DeclareMathSymbol{\varclub}{\mathalpha}{extraup}{84}
\renewcommand*{\@fnsymbol}[1]{\ensuremath{\ifcase#1\or  \vardspade \or \varheart \or \vardiamond\or \varclub \or
   \mathsection\or \mathparagraph\or \|\or **\or \dagger\dagger   \or \ddagger\ddagger \else\@ctrerr\fi}}
\author{
Nicolas \textsc{Curien}\thanks{Universit\'e Paris-Saclay.\hfill  \href{mailto:nicolas.curien@gmail.com}{\texttt{nicolas.curien@gmail.com}}}
\qquad\&\qquad
Igor \textsc{Kortchemski}\thanks{CNRS \& CMAP, \'{E}cole polytechnique.\hfill  \href{mailto:igor.kortchemski@math.cnrs.fr}{\texttt{igor.kortchemski@math.cnrs.fr}}}
\qquad\&\qquad
Cyril \textsc{Marzouk}\thanks{CMAP, \'{E}cole polytechnique.\hfill  \href{mailto:cyril.marzouk@polytechnique.edu}{\texttt{cyril.marzouk@polytechnique.edu}}}
}
\title{The mesoscopic geometry of sparse random maps}
\begin{document}
\date{}
\maketitle

\begin{abstract}
We investigate the structure of large uniform random maps with $n$ edges, $\face_n$ faces, and with genus $\gen_n$ in the so-called sparse case, where the ratio between the number vertices and edges tends to $1$. We focus on two regimes: the planar case $(\face_n, 2\gen_n) = (\spar_n, 0)$ and the unicellular case with moderate genus $(\face_n, 2 \gen_n) = (1, \spar_n-1)$, both when $1 \ll \spar_n \ll n$. Albeit different at first sight, these two models can be treated in a unified way using a probabilistic version of the classical core--kernel decomposition. In particular, we show that the number of edges of the core of such maps, obtained by iteratively removing degree $1$ vertices, is concentrated around $\sqrt{n \spar_{n}}$. Further, their kernel, obtained by contracting the vertices of the core with degree $2$, is such that the sum of the degree of its vertices exceeds that of a trivalent map by a term of order $\sqrt{\spar_{n}^{3}/n}$; in particular they are trivalent with high probability when $\spar_{n} \ll n^{1/3}$. This enables us to identify a mesoscopic scale $\sqrt{n/\spar_n}$ at which the scaling limits of these random maps can be seen  as the local limit of their kernels, which is the dual of the UIPT in the planar case and the infinite three-regular tree in the unicellular case, where each edge is replaced by an independent (biased) Brownian tree with two marked points.
\end{abstract}

\begin{figure}[!ht]\centering
\includegraphics[width=0.55\linewidth]{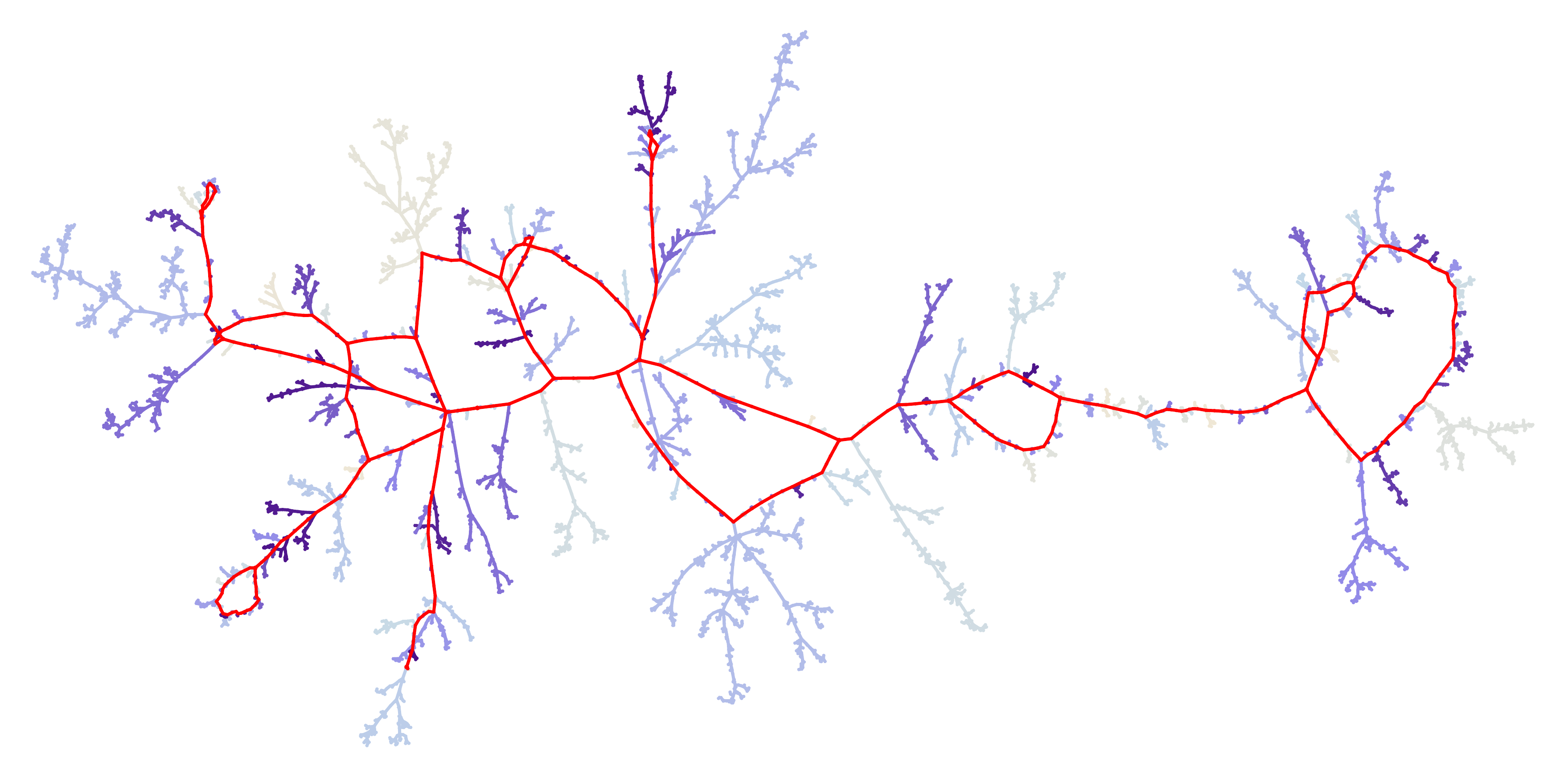}%
\includegraphics[width=0.45\linewidth]{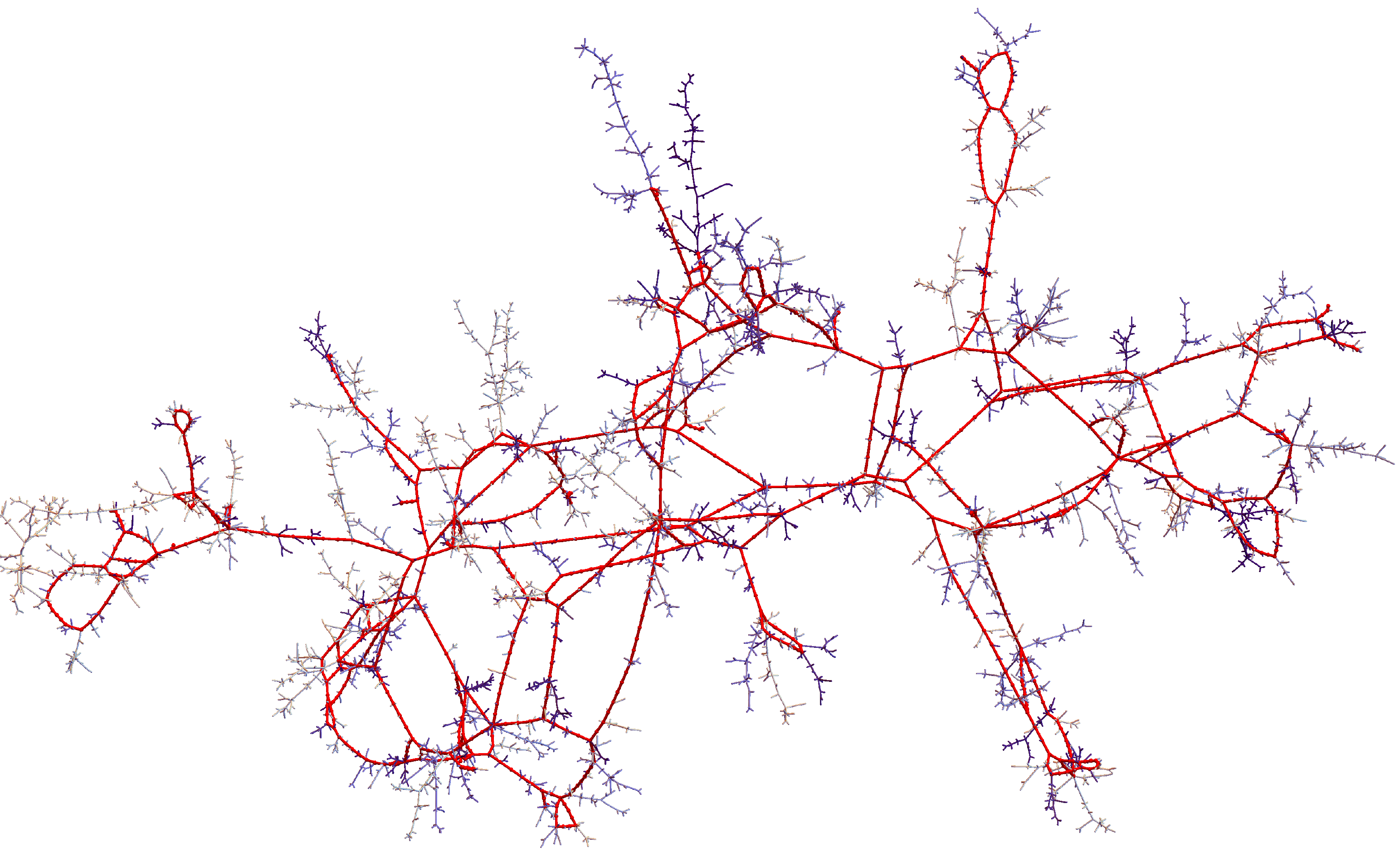}
\caption{Simulations of random plane maps with $n=\numprint{10000}$ edges and $\face_{n}$ faces, with respectively $\face_{n}=\lfloor n^{0.3}\rfloor$ and $\face_{n}=\lfloor n^{0.5} \rfloor$. Their core is represented in red.
}
\label{fig:scales_intro}
\end{figure}

\clearpage
\tableofcontents

\section{Introduction}

\subsection{Random maps with prescribed Euler-parameters}

Sampling uniform random maps with a prescribed number of edges, faces, and genus (by Euler's  formula, this also fixes the numbers of vertices) is a convenient way to probe different random geometries under various topological constraints. After the deep and intensive works devoted to the study of large random plane maps and the Brownian sphere,  recently much attention has been  devoted to the study of (classes of) maps all fixed ``Euler-parameters''.
See e.g.~\cite{FG14,KM21b} for plane maps and~\cite{BL21,BL20,Ray15,ACCR13,Lou21,JL21b} for high genus maps. 

We shall denote by $   \mathfrak{M}_n{(\face,\gen)}$ the set of all (rooted, non necessarily bipartite) maps with $n$ edges, $ \face$ faces, and genus $ \gen$, and by $ \Map_n{( \face,\gen)}$ a map chosen uniformly at random in this set. In this paper, we propose to study large random maps in the so-called \textbf{sparse} regime, where the ratio between the number vertices and edges tends to $1$. Precisely, by Euler's formula the map $ \Map_n( \face_n, \gen_n)$ has  $n+2- \spar_n$ vertices with $\spar_n = \face_n + 2 \gen_n$, quantity which will be called below the \emph{sparsity parameter}, and the sparse regime consists in $\spar_n = o(n)$.
Although we shall not treat here this model in full generality, the big picture we uncover is that such random maps look like uniform almost trivalent maps with  $\face_n$ faces in genus $ \gen_n$, and where each edge is replaced by a bipointed plane tree of size of order $n/\spar_n$.

Specifically, in the present work, we shall be interested in the two ``extreme'' cases  namely the \textbf{planar} case $ \gen_n=0$ and the \textbf{unicellular} case $ \face_n = 1$. We shall fix in the rest of this paper a sequence of integers $\spar_n$ so that
\begin{equation}\label{eq:standing}
\spar_n \cv \infty \qquad \text{and} \qquad  \frac{\spar_n}{n}   \cv 0,
\end{equation} 
and investigate the geometry of $ \Map_n{(\spar_n,0)}$ and $ \Map_n{(1, (\spar_n-1)/2)}$, which both have the same sparsity parameter $\spar_n = \face_n + 2 \gen_n$. 
Obviously, we implicitly restrict ourselves to odd integers $\spar_n$ when considering the second case. 
Let us first review the literature about those models.
\[\begin{array}{|c|c|c|}
\hline
& \mbox{Planar case}~\cite{FG14,KM21b} & \mbox{Unicellular case}~\cite{JL21b} \\
\hline
\mbox{Genus} &  \mathbf{0} & (\spar_n-1)/2 \\
\#\mbox{Faces} & \spar_n &  \mathbf{1} \\
\#\mbox{Edges} & n & n \\
\mbox{Uniform map} & \Map_n(\spar_n,0) & \displaystyle \Map_n(1, (\spar_n-1)/2)\\
\hline
\end{array}\]

\paragraph{Planar case.} Recently, Fusy and Guitter~\cite{FG14}  were interested in two- and three-point functions of biconditioned planar maps $ \Map_n(\spar_n,0)$ and have predicted that outside the so-called ``pure gravity'' class, typical distances in uniform planar maps with $n$ edges and $n^\alpha$ faces, with $\alpha \in (0,1)$, are of order $n^{(2-\alpha)/4}$. This has been recently confirmed in~\cite{KM21b} in the case of \emph{bipartite} planar maps.
Precisely, it is shown there that the scaling limit of such maps, after scaling distances by $n^{(2- \alpha)/4}$, is the celebrated Brownian sphere, which was first proved to be the limit of large uniform random quadrangulations~\cite{LG13, Mie13}, and then of many different discrete models of planar maps, as in~\cite{ABA21,BJM14,NR18,CLG19,Mar19}
and many other papers. Let us mention that~\cite{KM21b} actually deals with the more general model of Boltzmann maps, with face weights. This was proved by combining a classical bijective encoding of bipartite maps via labelled trees and the criterion from~\cite{Mar19} with new local limit estimates for random walks.

\paragraph{Unicellular case.} Very recently, Janson \& Louf~\cite{JL21b} have been interested in the geometry of uniform unicellular maps with moderate high genus, i.e.~$\Map_n( 1, (\spar_{n}-1)/2)$ with $\spar_n$ satisfying~\eqref{eq:standing}. Their main result is  that, after rescaling by $ \sqrt{n/\spar_n}$, the distribution of the sequence of the lengths of the shortest cycles in the map asymptotically matches that of the shortest non-contractible loops in Weil--Peteresson random surfaces in high genus, which are both given by an inhomogeneous Poisson process with explicit intensity, see Section~\ref{sec:comments}. Let us mention that unicellular maps $ \Map_n(1, \gen_{n})$ whose genus  is proportional to the number of edges
 have also been investigated~\cite{ACCR13,JL21a,Ray15}.  They also form a toy model of hyperbolic geometry.
\medskip

In this work, we investigate the combinatorial structure as well as the geometry of $\Map_n(\spar_{n},0)$ and $\Map_n( 1, (\spar_{n}-1)/2)$ at the mesoscopic scale $ \sqrt{n/ \spar_n}$. As opposed to the works cited above, we rely here on a different approach based on the core--kernel decomposition, diffracted through a probabilistic lens. This casts a new light on the above results, see Section~\ref{sec:comments}. Let us first review these decompositions.

\subsection{Core--Kernel decompositions of maps}
Without further notice, all maps considered in this work will be finite and rooted, i.e.~with a distinguished oriented edge.  
If $ \map$ is a map, we write $\mathrm{Vertice}(\map)$, $\mathrm{Edges}(\map)$, and $\mathrm{Face}(\map)$ for its set of vertices, edges, and faces respectively. 
Let us  recall the concepts of core and kernel, which are instrumental in the classical theory of random graphs, see e.g.~\cite{JKLP93,Luc91,NRR15,NR18,Lou21,CMS09,Cha10}. Starting from $ \map$ and repeatedly removing vertices of degree $1$, we obtain a map $ \Core( \map)$, called the \textbf{core} of $\map$.
We then replace all maximal paths of vertices of degree $2$ in this core by single edges to get another map $ \Ker( \map)$,  called the \textbf{kernel} of $\map$,  which only has vertices of degree at least $3$. When $ \map$ is not a tree, the core and its kernel are nonempty.  The root edge is canonically transferred from $ \map$ to $ \Core( \map)$ and then to $  \Ker( \map)$, see Figure~\ref{fig:decomposition} and Figure~\ref{fig:decomp-torus}. Notice that the three maps $\map$, $ \Core( \map)$, and $ \Ker(\map)$ all have the same number of faces and the same genus. 

\begin{figure}[!ht]\centering
\includegraphics[width=1\linewidth]{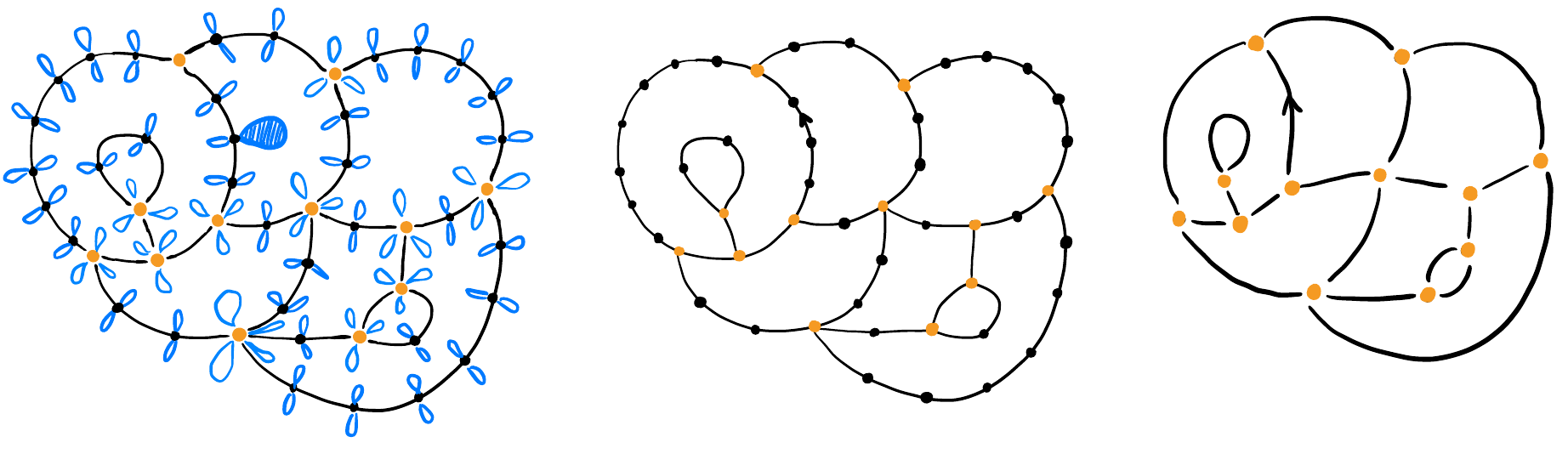}
\caption{Decomposition of a planar map (on the left) into its core after  iteratively removing degree one vertices (on the middle), and then into its kernel by contracting vertices of degree $2$ (on the right). The root edge $ \vec{e}$ of the map is carried by the thick blue tree on the left hand side or on the first edge of the core in clockwise order around the root face. It is transferred to the core and the kernel in a natural way: the root edge $ \vec{e}_c$ of the core is $\vec{e}_c= \vec{e}$ if it already belongs to the core, otherwise it is carried by the tree grafted to the right of the origin of $\vec{e}_c$. }
\label{fig:decomposition}
\end{figure}

\begin{figure}[!ht]
 \begin{center}
 \includegraphics[width=1\linewidth]{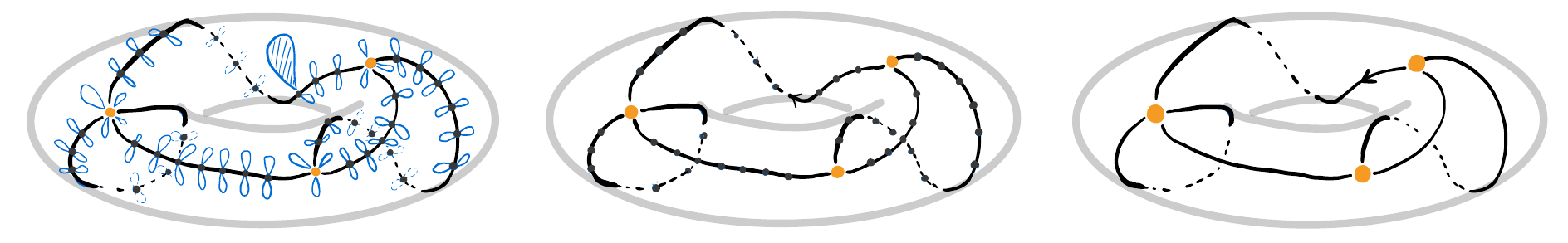}
 \caption{Decomposition of a map of the torus (left) into its core (middle) and kernel (right). Here also, the root edge is carried by the thick blue tree and is transferred naturally to its core and kernel. \label{fig:decomp-torus}}
 \end{center}
 \end{figure}

In the above decomposition, the kernel is a map with only vertices of degree at least $3$. If  $ \mathfrak{K}$ is such a map, then we denote by $\Defect( \mathfrak{K}) \ge 0$ the number defined by
\[\Defect( \mathfrak{K}) = \sum_{v \in \mathrm{Vertices}(  \mathfrak{K})} (\deg(v)-3)
=  2\#\mathrm{Edges}(\mathfrak{K}) - 3\#\mathrm{Vertices}(\mathfrak{K}).\]
We call this number the \textbf{defect number} of $ \mathfrak{K}$; it quantifies how far $ \mathfrak{K}$ is from being \emph{trivalent}, which corresponds to the case $\Defect( \mathfrak{K}) = 0$. 
For $\face \geq 1, \gen \geq 0$, and $\defect \geq 0$, we  denote by $\mathcal{T}_{\defect}( \face, \gen)$ the set of all rooted maps with $ \face$ faces, in genus $ \gen$, whose vertices all have  degree at least $3$, and which have defect number $\defect$.  Let us note that Euler's formula yields for maps in $\mathcal{T}_{\defect}( \face, \gen)$:
\begin{equation}\label{eq:Euler_defauts}
\#\mathrm{Edges} = 3\#\mathrm{Faces} - \Defect + 6(\mathrm{Genus}-1),
\end{equation}
so controlling the defect number is equivalent to controlling the number of edges (and thus of vertices). It turns that  the laws of the core and kernel of a uniform random map with fixed number of edges, faces, and genus are explicit, see Proposition~\ref{prop:loi_core_kernel}. This will be instrumental to all our results. It is interesting to note that the core--kernel decompositions have been used to study enumerative properties of large unicellular maps with fixed genus~\cite{CMS09,Cha10}. Here, we pursue a  probabilistic approach in a more general context.

\paragraph{Volumes of the core and of the kernel.} 
Recall our standing assumptions~\eqref{eq:standing} and denote by $ \Mapb_{n}^{\spar_{n}}$ either $ \Map_{n}(\spar_{n},0)$ or $ \Map_{n}(1,(\spar_{n}-1)/2)$. Our first main results, Theorems~\ref{thm:defauts_ker} and~\ref{thm:aretes_core}, describe the asymptotic behaviour of the size of the core and kernel of $ \Mapb_{n}^{\spar_{n}}$. In particular, we identify a phase transition according to whether the sparsity parameter $\spar_{n}$ is at most of the same order as $n^{1/3}$, or is much larger. 
A model-dependent constant $\lambda_\circ$ appears in these results:
\begin{equation}\label{eq:lambda}
\lambda_\circ = 1-\frac{\sqrt{3}}{2} \quad\text{in the planar case }\enskip 
\qquad\text{and}\qquad
\lambda_\circ = 0 \quad\text{in the unicellular case}.
\end{equation}
It turns out to be the density of loop-edges in the local limit of the kernel of $ \Mapb_n^{s_n}$, see Section~\ref{sec:defauts_bis}.

Throughout this work, we use the notation $X_n \displaystyle\mathop{\to}^\P X$ and $X_n \displaystyle\mathop{\to}^{(d)} X$ to refer  to respectively  convergence in probability and in distribution of a sequence of random variables $X_n$ to a limit $X$. By abuse of notation, we shall also write $X_n \displaystyle\mathop{\to}^{(d)} \mu$ when $\mu$ is a probability measure to refer to the weak convergence of the law of $X_n$ to $\mu$.
We denote by $ \mathrm{Poi}(c)$ the Poisson law with mean $c \ge 0$, which is interpreted as the Dirac mass at $0$ when $c=0$.

\begin{theorem}[Defect number of the kernel] 
\label{thm:defauts_ker}
Assume that $\spar_n$ satisfies~\eqref{eq:standing} and let $\lambda_\circ$ as in~\eqref{eq:lambda}.
\begin{enumerate}
\item\label{thm:defauts_ker-Poisson} If $n^{-1/3} \spar_n \to a$ for some $a \in [0,\infty)$, then
\[\Defect(\Ker( \Mapb_n^{\spar_{n}})) \cvloi \mathrm{Poi} \left(3 (1-\lambda_{\circ}) \sqrt{\frac{3}{2} a^3} \right).\]
In particular  if $n^{-1/3} \spar_n \to  0$, then the probability that $\Ker(\Mapb_n^{\spar_{n}})$ is trivalent tends to $1$ as $n \to \infty$.

\item\label{thm:defauts_ker-Gaussien} If $n^{-1/3} \spar_n \to \infty$, then
\[\sqrt{\frac{n}{\spar_n^{3}}} \cdot \Defect(\Ker(\Mapb_n^{\spar_{n}})) \cvproba 3 (1-\lambda_{\circ}) \sqrt{\frac{3}{2}}.\]
\end{enumerate}
\end{theorem}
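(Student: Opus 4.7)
The plan is to work directly with the explicit joint law of the kernel and the core size provided by Proposition~\ref{prop:loi_core_kernel}. Realising a map as a kernel whose edges are subdivided into paths of length $\ge 1$, together with a plane tree grafted at each corner of the resulting core, this law factorises in such a way that the weight assigned to a kernel $\mathfrak{K}$ depends on $\mathfrak{K}$ only through its number $E$ of edges, which by~\eqref{eq:Euler_defauts} equals $3\spar_n - \Defect(\mathfrak{K}) - 6$. With $T(x) = (1 - \sqrt{1-4x})/(2x)$ the Catalan generating function, summing over subdivision lengths via $T = 1 + xT^{2}$ yields the compact weight $w_n(E) := [x^n]\bigl(\frac{T(x)-1}{2-T(x)}\bigr)^{E}$ and the key identity
\[
\P\bigl(\Defect(\Ker(\Mapb_n^{\spar_n})) = d\bigr) \;\propto\; N_{d,\spar_n}\, w_n(3\spar_n - d - 6),
\]
where $N_{d,\spar_n}$ counts rooted maps with the prescribed face-count and genus, minimum degree $3$, and defect $d$. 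The theorem then reduces to controlling the two ratios $w_n(E-1)/w_n(E)$ and $N_{d+1,\spar_n}/N_{d,\spar_n}$ as $n\to\infty$.

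For the analytic ratio, in the variable $u := \sqrt{1-4x}$ one has $T = 2/(1+u)$, hence $f(x) := (T(x)-1)/(2-T(x)) = (1-u)/(2u)$, which near $x = 1/4$ behaves like $\tfrac{1}{2}(1-4x)^{-1/2}$. A standard singularity-analysis / saddle-point argument in the regime $E = \Theta(\spar_n) = o(n)$ yields
\[
\frac{w_n(E-1)}{w_n(E)} \;\sim\; \sqrt{\frac{2E}{n}} \;\sim\; \sqrt{\frac{6\, \spar_n}{n}}.
\]
For the combinatorial ratio, the approach is a bijective ``edge-splitting'' argument: a rooted kernel with defect $d+1$ is obtained from one with defect $d$ by choosing an edge of the latter and subdividing it by a new vertex carrying an extra pair of half-edges. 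Only non-loop edges contribute a clean $+1$ to the defect with Poissonian multiplicity, and using the local-limit description of the trivalent kernel developed in Section~\ref{sec:defauts_bis}---the dual of the UIPT in the planar case, the infinite $3$-regular tree in the unicellular case---the proportion of non-loop edges in a uniform defect-$d$ kernel converges to $1 - \lambda_\circ$. Accounting for the symmetry factor $d+1$ coming from the order in which defect units have been introduced, this gives
\[
\frac{N_{d+1,\spar_n}}{N_{d,\spar_n}} \;\sim\; \frac{\tfrac{3}{2}(1 - \lambda_\circ)\, \spar_n}{d+1}.
\]

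Combining the two estimates, for every fixed $d \ge 0$,
\[
\frac{\P(\Defect(\Ker(\Mapb_n^{\spar_n})) = d+1)}{\P(\Defect(\Ker(\Mapb_n^{\spar_n})) = d)} \;\sim\; \frac{\lambda_n}{d+1}, \qquad \lambda_n := 3(1-\lambda_\circ)\sqrt{\tfrac{3}{2}}\, \frac{\spar_n^{3/2}}{\sqrt{n}},
\]
since $\tfrac{3}{2}(1-\lambda_\circ)\cdot\sqrt{6} = 3(1-\lambda_\circ)\sqrt{3/2}$. When $\spar_n/n^{1/3} \to a$, $\lambda_n \to \lambda := 3(1-\lambda_\circ)\sqrt{3a^3/2}$; combined with a uniform-in-$d$ tail bound derived from a crude upper estimate on $w_n(E)$ for $E$ bounded away from $3\spar_n$, this yields $\Defect \cvloi \mathrm{Poi}(\lambda)$, which is part~(\ref{thm:defauts_ker-Poisson}). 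When $\spar_n/n^{1/3} \to \infty$, $\lambda_n \to \infty$ but the same uniform ratio estimate shows that $\Defect$ is asymptotically distributed as a Poisson variable of mean $\lambda_n$, which concentrates around its mean by the weak law of large numbers; multiplying by $\sqrt{n/\spar_n^3}$ gives part~(\ref{thm:defauts_ker-Gaussien}). The main obstacle in executing the plan is justifying the ``edge-splitting'' asymptotics and identifying $\lambda_\circ$ as the loop-edge density of the trivalent kernel local limit: this is the step where the planar and unicellular cases genuinely diverge, and where one must invoke Tutte-style enumerative formulas for trivalent planar maps on the one hand, and Chapuy-type recursions for trivalent unicellular maps in high genus on the other~\cite{CMS09,Cha10}.
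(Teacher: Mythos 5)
Your overall strategy coincides with the paper's: write $\P(\Defect(\Ker(\Mapb_n^{\spar_n}))=d)$ as proportional to $\#\mathcal{T}_d(\face_n,\gen_n)\cdot\Phi_n(3\spar_n-d-6)$ via Proposition~\ref{prop:loi_core_kernel}, estimate the two consecutive-defect ratios separately, and conclude by a Poisson-type ratio analysis with uniform tail bounds. Your generating-function treatment of the ``analytic'' ratio is a cosmetic variant of Lemma~\ref{lem:phi_et_Phi}, which manipulates the binomials $\binom{c}{k}\binom{2n}{n+c}$ directly, and your final bookkeeping reproduces the paper's $D_n$.

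The genuine gap is the combinatorial ratio $\#\mathcal{T}_{d+1}/\#\mathcal{T}_{d}$, which you essentially assert. First, the elementary move is stated backwards and imprecisely: one passes from defect $d$ to defect $d+1$ by \emph{contracting} a non-loop, non-root edge (subdividing an edge creates a forbidden degree-$2$ vertex and changes the Euler parameters once extra half-edges are attached). The inverse move blows a vertex of degree $\ell+3$ up into a trivalent tree, and the number of ways of doing so is the Catalan number $\frac{1}{\ell+2}\binom{2\ell+2}{\ell+1}$; hence the correspondence is not a clean ``$(d+1)$-to-one with Poissonian multiplicity'' but carries the weights $\defect!\,\prod_v \mathrm{Cat}(\deg v-2)$ of~\eqref{eq:bijection_defauts}, which must be shown not to distort the count beyond an exponential factor (this is the content of~\eqref{eq:RNexp}). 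Second, invoking the local limit of the kernel to identify the non-loop density $1-\lambda_\circ$ is not sufficient: the ratio asymptotics must hold \emph{uniformly} for $d=o(\spar_n)$ --- indispensable both for the tail bound in part~1 and for all of part~2, where the relevant $d$ is of order $\spar_n^{3/2}/\sqrt{n}\to\infty$, whereas your combined ratio estimate is only claimed for fixed $d$. The paper obtains this uniformity from a \emph{large-deviation} (exponentially small) bound on the loop count --- Budzinski's estimate for trivalent planar maps, transferred to $\mathcal{T}_d$ through the $C^{\pm\defect}$ Radon--Nikodym bound (a polynomially small error would be destroyed by this exponential factor), and a configuration-model switching argument in the unicellular case --- not from Tutte- or Chapuy-type enumeration. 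Until these inputs are supplied, the central asymptotic $\#\mathcal{T}_{d+1}/\#\mathcal{T}_{d}\sim 3\spar_n(1-\lambda_\circ)/(2(d+1))$, which is exactly where $\lambda_\circ$ and the planar/unicellular dichotomy enter, remains unproved.
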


\begin{remark}
Theorem~\ref{thm:defauts_ker} establishes a phase transition in the appearance of vertices of degree larger than or equal to $4$ in the kernel of $ \Mapb_n^{\spar_n}$ at  order $ \spar_n \approx n^{1/3}$. This is consistent with~\cite[Lemma~3]{Cha10}, which shows that for fixed $\gen$ the kernel of $ \Map_{n}(1,\gen)$ is trivalent with probability tending to $1$ as $n \rightarrow \infty$. We suspect similar phases transitions to occur at orders $ \spar_n \approx n^{1- 2/(3k)}$ for the appearance of vertices of degree larger than or equal to $3+k$ for $k = 2,3, \dots$ and perhaps similar Poisson statistics for the number of such vertices when $ \spar_n \sim \text{Cst } n^{1- 2/(3k)}$.
\end{remark}

Since the kernel of a map has the same genus and number of  faces as the original map, by Euler's formula~\eqref{eq:Euler_defauts}, Theorem~\ref{thm:defauts_ker} also provides the asymptotic behaviour of the number of edges (and therefore of vertices) of the kernel  of $ \Map_{n}(\spar_{n},0)$ or $ \Map_{n}(1, (\spar_{n}-1)/2)$. 
The main tool to leverage the explicit laws of the core and kernel of a uniform random maps with fixed number of edges, faces and, genus in order to obtain these limit theorems is a  careful analysis of a so-called \emph{contraction operation}, which allows to iteratively create defects from a trivalent map; see Section~\ref{sec:contraction}.

\begin{theorem}[Number of edges in the core] 
\label{thm:aretes_core}
Assume that $\spar_n$ satisfies~\eqref{eq:standing}.
Then
\[\frac{1}{\sqrt{n\spar_n}} \cdot \#\mathrm{Edges}(\Core( \Mapb_n^{\spar_{n}})) \cvproba \sqrt{\frac{3}{2}}.\]
\end{theorem}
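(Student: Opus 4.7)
The plan is to exploit the identity
\[
\#\mathrm{Edges}(\Core(\Mapb_n^{\spar_n})) = \#\mathrm{Edges}(\Ker(\Mapb_n^{\spar_n})) + D_2,
\]
where $D_2$ denotes the number of degree-$2$ vertices of the core (each such vertex disappears upon contraction, merging two consecutive edges into one), and to combine Theorem~\ref{thm:defauts_ker} with a Laplace-type analysis of the core given the kernel, using the explicit distribution provided by Proposition~\ref{prop:loi_core_kernel}.

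\textbf{Reduction to $D_2$.} Since $\Ker(\Mapb_n^{\spar_n})$ has the same face count $\face_n$ and genus $\gen_n$ as $\Mapb_n^{\spar_n}$, Euler's formula~\eqref{eq:Euler_defauts} gives
\[
\#\mathrm{Edges}(\Ker(\Mapb_n^{\spar_n})) = 3\spar_n - \Defect(\Ker(\Mapb_n^{\spar_n})) - 6.
\]
By Theorem~\ref{thm:defauts_ker}, the defect is $o_\P(\spar_n)$ in all regimes (it is $O_\P(1)$ when $\spar_n \ll n^{1/3}$ and of order $\sqrt{\spar_n^3/n} = \spar_n\sqrt{\spar_n/n}$ otherwise), so $\#\mathrm{Edges}(\Ker(\Mapb_n^{\spar_n})) = 3\spar_n\,(1 + o_\P(1))$, which is $o_\P(\sqrt{n\spar_n})$ under~\eqref{eq:standing}. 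It therefore suffices to prove that $D_2/\sqrt{n\spar_n} \cvproba \sqrt{3/2}$.

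\textbf{Conditional law of the core.} By Proposition~\ref{prop:loi_core_kernel}, given a rooted kernel $\mathfrak{K}$ with $N_k$ edges, the map is reconstructed by inserting $(X_1, \ldots, X_{N_k})$ degree-$2$ vertices on the $N_k$ kernel edges to form a core of $C = N_k + \sum_i X_i$ edges, and then grafting a rooted plane forest of $2C$ trees with $n - C$ total edges on the $2C$ corners of the core. The cycle lemma provides $\frac{C}{n}\binom{2n}{n-C}$ such forests, while the number of non-negative tuples $(X_1, \ldots, X_{N_k})$ with sum $c - N_k$ equals $\binom{c-1}{N_k-1}$. Up to a normalization depending only on $(N_k, n)$,
\[
\P(C = c \mid N_k) \;\propto\; \binom{c-1}{N_k-1} \cdot c \cdot \binom{2n}{n-c}.
\]

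\textbf{Laplace analysis.} In the relevant regime $N_k \ll c \ll n$, Stirling's formula yields
\[
\log \P(C = c \mid N_k) = N_k \log c - \frac{c^2}{n} + O\!\Bigl(\frac{N_k^2}{c} + \frac{c^4}{n^3}\Bigr) + \mathrm{const}.
\]
The leading function is maximized at $c_\star = \sqrt{N_k n/2}$ with second derivative $-4/n$, so conditionally on $N_k$ the variable $C$ concentrates in a Gaussian window of width $O(\sqrt n)$ around $c_\star$. Substituting $N_k = 3\spar_n\,(1 + o_\P(1))$ from the first step yields
\[
C = \sqrt{3\spar_n n/2}\,(1 + o_\P(1)) + O_\P(\sqrt n) = \sqrt{\tfrac{3}{2}}\,\sqrt{n\spar_n}\,(1 + o_\P(1)),
\]
since $\sqrt n/\sqrt{n\spar_n} = 1/\sqrt{\spar_n} \to 0$.

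The main technical obstacle is making the Laplace analysis rigorous and uniform in the random value of $N_k$: one needs tail bounds $\P(|C - c_\star| \geq \varepsilon c_\star \mid N_k) \to 0$ uniformly for $N_k$ in a high-probability window around $3\spar_n$, together with a check that the contributions to the normalizing sum from $c \lesssim N_k$ and from $c \asymp n$ are negligible. Both reduce to standard Gaussian/Stirling estimates under~\eqref{eq:standing}. Rooting conventions only affect the polynomial factor $c$ above and hence do not alter the leading asymptotic.
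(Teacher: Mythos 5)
Your argument is correct and follows essentially the same route as the paper: concentration of the kernel size at $3\spar_n(1+o_\P(1))$ via Theorem~\ref{thm:defauts_ker}, then a Laplace/Stirling analysis of the explicit conditional law $\varphi_n(c,k)/\Phi_n(k)$ of the core size from Proposition~\ref{prop:loi_core_kernel}, locating the maximizer at $\sqrt{kn/2}$ with a Gaussian window of width $O(\sqrt n)$ — which is exactly the content of Lemma~\ref{lem:phi_et_Phi} and Corollary~\ref{cor:TCL_core} that the paper invokes. The detour through the degree-$2$ vertex count $D_2$ is harmless but unnecessary, since you end up analyzing the core size $C$ directly anyway.
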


It is interesting to note that for uniform random plane maps with $n$ edges, the number of edges of the kernel and of the core concentrate around $(4-4 \sqrt{6}/3)n$ and $\sqrt{6}n/3$ respectively, with Gaussian fluctuations of order $\sqrt{n}$ in both cases, see~\cite[Theorem~5]{NR18}. In this direction, we shall establish (Corollary \ref{cor:TCL_core}) a Central Limit Theorem for  $\#\mathrm{Edges}(\Core( \Mapb_n^{\spar_{n}}))$ conditionally given the number of edges of $\Ker( \Mapb_n^{\spar_{n}})$.
This is sufficient to deduce an unconditioned CLT for the number of edges of the core when $\spar_n = O(\sqrt{n})$, but we believe this is true in general, see precisely Conjecture~\ref{conjecture_TCL}.

As a side result of independent interest, we obtain explicit asymptotic enumeration estimates when $\spar_n= O(n^{1/3})$. Let us mention that such estimates for $\# \mathfrak{M}_{n}(  \face_{n},0)$ when $\face_{n}/n \rightarrow f \in (0,\infty)$ and of $\# \mathfrak{M}_{n}(  1,\gen_{n})$ when $\gen_{n}/n \rightarrow g \in (0,\infty)$ are given respectively in~\cite[Theorem~1]{BCR93} and~\cite[Theorem~3]{ACCR13}. In the sparse regime, to the best of our knowledge the following ones are new.

\begin{corollary}
\label{cor:enumeration}
If $\gen=0$ and $\face_n \to \infty$ with $n^{-1/3} \face_n \to f \in [0,\infty)$, then
\[\# \mathfrak{M}_n(\face_n,0)
\equi \frac{\exp(-(2-\sqrt{3}) (3f/2)^{3/2})}{4 \pi} \cdot n^{-3} \cdot 4^{n}  \cdot \left( 2^{1/3} \frac{\e n}{\face_{n}}\right)^{3\face_{n}/2} 
.\]
On the other hand, if $\face_n=1$ and $\gen_n \to \infty$ with $n^{-1/3} \gen_n \to g \in [0,\infty)$, then
\[\# \mathfrak{M}_n(1,\gen_n)
\equi \frac{1}{2\pi } \cdot \gen_n^{-1/2} \cdot n^{-3/2} \cdot 4^{n}  \cdot \left(  \frac{\e n^3}{12 \gen_n}\right)^{\gen_n}
.\]
\end{corollary}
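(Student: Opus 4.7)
The strategy is to leverage the core--kernel decomposition (Proposition~\ref{prop:loi_core_kernel}) together with Theorem~\ref{thm:defauts_ker}(\ref{thm:defauts_ker-Poisson}) by writing
\[
\# \mathfrak{M}_n(\face_n, \gen_n) = \frac{N_n}{\P\bigl(\Defect(\Ker(\Mapb_n^{\spar_n})) = 0\bigr)},
\]
where $N_n$ counts the maps in $\mathfrak{M}_n(\face_n, \gen_n)$ whose kernel is trivalent. The denominator converges to the explicit constant $\exp(-3(1-\lambda_\circ)\sqrt{3a^3/2})$ by Theorem~\ref{thm:defauts_ker}(\ref{thm:defauts_ker-Poisson}), with $a = f$ in the planar case and $a = 2g$ in the unicellular case (since $\spar_n = 1 + 2 \gen_n$).

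The explicit joint law of the core and the kernel provided by Proposition~\ref{prop:loi_core_kernel} yields the combinatorial identity
\[
N_n = \# \mathcal{T}_0(\face_n, \gen_n) \cdot \sum_{E_c \geq 3 \spar_n - 6} \binom{E_c - 1}{3 \spar_n - 7} \cdot \frac{E_c}{n} \binom{2n}{n - E_c},
\]
whose three factors enumerate respectively: the trivalent kernel itself, which has $3 \spar_n - 6$ edges by~\eqref{eq:Euler_defauts}; the compositions of $E_c$ into $3 \spar_n - 6$ positive parts, describing the inflation of each kernel edge into a path; and the forest of $2 E_c$ rooted plane trees with total $n - E_c$ edges grafted one at each corner of the core (Raney's formula). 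We then evaluate this sum by the Laplace method around the saddle $E_c^{\star} = \sqrt{3 n \spar_n / 2}$ predicted by Theorem~\ref{thm:aretes_core}: a direct computation shows that the log-summand has second derivative $-4/n + o(1/n)$ there, so the sum is asymptotically $\sqrt{\pi n/2}$ times the value of the summand at $E_c^\star$.

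For the factorials, Stirling's formula applied to $(3 \spar_n - 7)!$ together with the local central limit theorem $\binom{2n}{n - E_c^{\star}} \sim 4^n/\sqrt{\pi n} \cdot \e^{-3 \spar_n / 2}$ provide asymptotic equivalents of the binomial coefficients. For the prefactor $\# \mathcal{T}_0(\face_n, \gen_n)$ we invoke the classical enumeration of trivalent maps: Tutte's formula for rooted cubic planar maps (equivalently, triangulations by duality) in the planar case, and the Lehman--Walsh / Goulden--Jackson formula for rooted trivalent unicellular maps of genus $\gen_n$ in the unicellular case. Both produce explicit closed-form expressions which, combined with the Laplace sum estimate, give the desired asymptotic for $N_n$.

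The delicate point is tracking the sub-leading exponential corrections. Since $(3 \spar_n)^2 / E_c^\star$ does \emph{not} vanish but tends to a positive constant, the naive approximation $\binom{E_c^\star - 1}{3\spar_n - 7} \sim (E_c^\star)^{3 \spar_n}/(3 \spar_n)!$ misses a multiplicative factor $\exp(-(3\spar_n)^2/(2 E_c^\star)) \to \exp(-3\sqrt{6}\,a^{3/2}/2)$ coming from the next term in the expansion $\prod_{j=0}^{k-1}(1 - j/N) = \exp(-k^2/(2N) + o(1))$. When combined with the Poisson probability from the denominator, this produces precisely the exponential $\exp(-(2-\sqrt{3})(3f/2)^{3/2})$ appearing in the planar statement; in the unicellular case ($\lambda_\circ = 0$, $a = 2g$) the two contributions cancel exactly, which accounts for the absence of such a factor in the second formula. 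Matching these constants is the main obstacle; the remainder of the argument is routine Stirling expansion.
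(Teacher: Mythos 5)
Your overall strategy is exactly the paper's: write $\#\mathfrak{M}_n(\face_n,\gen_n)$ as the number $N_n$ of maps with trivalent kernel divided by $\P(\Defect(\Ker(\Mapb_n^{\spar_n}))=0)$, evaluate $N_n$ by a Laplace/Stirling analysis of the sum over core sizes around the saddle $E_c^\star=\sqrt{3n\spar_n/2}$ combined with the classical enumeration of trivalent maps, and take the limit of the denominator from Theorem~\ref{thm:defauts_ker}. Your accounting of the exponential corrections is also right: the factor $\exp(-(3\spar_n)^2/(2E_c^\star))\to\exp(-\sqrt{(3a)^3/2})$ combines with the Poisson mass at $0$ to leave $\exp(-\lambda_\circ\sqrt{(3a)^3/2})$, which is $\exp(-(2-\sqrt3)(3f/2)^{3/2})$ in the planar case and $1$ in the unicellular case since $\lambda_\circ=0$ there. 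This is precisely the route taken in the paper, via Proposition~\ref{prop:loi_core_kernel}, Corollary~\ref{cor:equivPhikn} and the formulas~\eqref{eq:triangulations} and~\eqref{eq:triangulations_high_gen}.

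However, your combinatorial identity for $N_n$ is wrong, and the error is not harmless. Writing $k=3\spar_n-6$, the number of rooted maps with $n$ edges, a given rooted kernel with $k$ edges and a core with $E_c$ edges is $\varphi_n(E_c,k)=\binom{E_c}{k}\binom{2n}{n+E_c}$, not $\binom{E_c-1}{k-1}\cdot\frac{E_c}{n}\binom{2n}{n+E_c}$. Your product counts pairs (composition, forest) but drops the transfer of the root: a rooted map determines, beyond the rooted kernel, the composition and the forest, also \emph{which} of the edges subdividing the kernel's root edge is the root of the core, and \emph{where} in the first tree (or on the core's root edge) the root of the map sits. Summing over these multiplicities is what turns the composition count $\binom{E_c-1}{k-1}$ into $\binom{E_c}{k}=\sum_i i\binom{E_c-i-1}{k-2}$ (see~\eqref{eq:coeffbinom}) and the Raney count $\frac{E_c}{n}\binom{2n}{n+E_c}$ into $\binom{2n}{n+E_c}$. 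Since $\binom{E_c-1}{k-1}\frac{E_c}{n}\binom{2n}{n+E_c}=\frac{k}{n}\,\varphi_n(E_c,k)$, your $N_n$ is exactly $\frac{3\spar_n-6}{n}$ times the correct value; this factor tends to $0$ and would corrupt the polynomial prefactors of the final asymptotics (producing, e.g., $\face_n n^{-4}$ instead of $n^{-3}$ in the planar case). The fix is local: replace your summand by $\varphi_n(E_c,k)$, after which the Laplace evaluation you describe reproduces Corollary~\ref{cor:equivPhikn} and yields the stated formulas.
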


\subsection{The intermediate scales in biconditioned maps}

In another direction, we are interested in the asymptotic geometry of $ \Mapb_n^{\spar_{n}}$.  First, in addition to our first results which describe the size of the core and kernel, it will also become clear that conditionally on these parameters, they are uniformly distributed. In particular, when $\spar_n = o(n^{1/3})$, by Theorem~\ref{thm:defauts_ker} the kernel is with high probability a uniformly chosen trivalent map, either with $ \spar_{n}$ faces in the planar case, or with genus $ (\spar_{n}-1)/2$ in the unicellular case. The local limits of those objects are well known:
\begin{itemize} 
\item In the planar case, by~\cite{Ste18} uniform trivalent plane maps converge in distribution in the local topology to the dual map of the Uniform Infinite Planar Triangulation (\textrm{UIPT}) of type 1. This result has recently been extended to the case of essentially trivalent plane maps (i.e.~when the defect number is negligible compared to the size of the map) by Budzinski~\cite{Bud21}.
\item In the unicellular case, we shall prove in Section~\ref{ssec:unicellularestimates} using the configuration model that an essentially trivalent unicellular map in high genus converges locally towards the three-regular tree.
\end{itemize}
Second, let us explain how to reconstruct the original map from its kernel (see also~\cite[Section~3.1]{Cha10}). A chain of vertices with degree two in the core and the trees grafted on it can be seen as a single plane tree with two distinguished  vertices. Note that some care is needed at the vertices with degree three and higher so that a tree gets assigned to a unique chain of edges of the core. Our convention is that such a tree is grafted on the chain immediately to its right when turning around the corner. Therefore one can directly construct the map from its kernel by replacing each edge of the kernel by a tree with two distinguished ordered vertices. The bipointed tree replacing the root edge of the kernel additionally carries an oriented edge on its right part which is the root of the entire map. See Figure~\ref{fig:splitarbre} for an illustration. 

\begin{figure}[!ht]\centering
\includegraphics[width=1\linewidth]{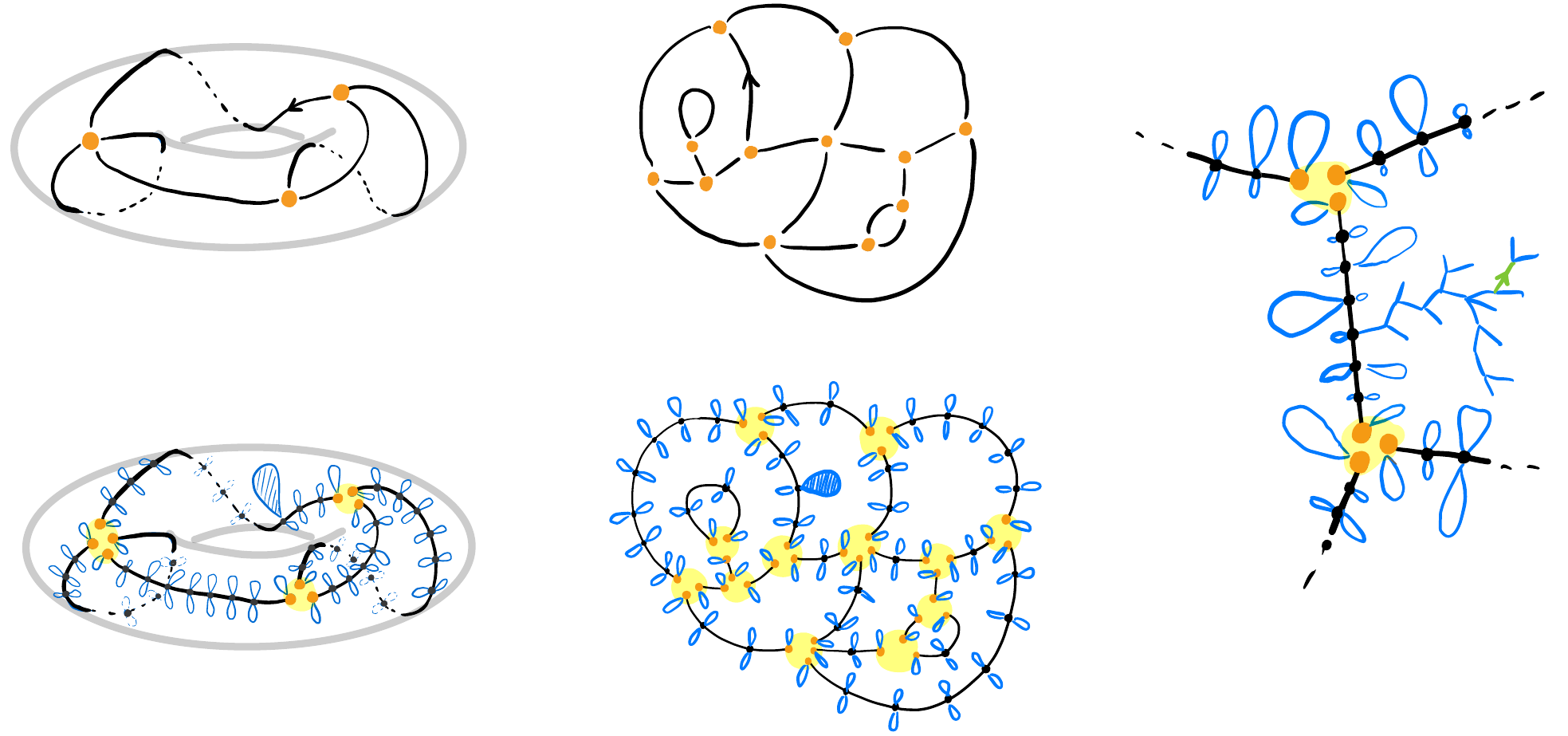}
\caption{Reconstructing a map from its kernel by replacing edges by bipointed trees.}
\label{fig:splitarbre}
\end{figure}

Since $\Ker( \Mapb_n^{\spar_{n}})$ has roughly $K_n \sim 3\spar_n$ edges  by Theorem~\ref{thm:defauts_ker} and the collection of bipointed trees is roughly uniformly distributed amongst those with $n$ edges in total, we can expect that the bipointed trees  have of order $n/\spar_n$ edges each and their diameter is typically of order $\sqrt{n/\spar_n}$.  Although there  are other  interesting  scales to look at, going from the  microscopic scale, or local convergence, to the diameter scale, see precisely Proposition~\ref{prop:limites_arbres} and Conjecture~\ref{conjecture}, we focus here on the geometric structure of $ \Mapb_n^{\spar_n}$ at the mesoscopic scale $ \sqrt{n/\spar_n}$. 

Let us construct the limits that appear in the next theorem; we refer to Section~\ref{sec:mesoscopic} for more details and explanations and to~\ref{fig:mesoscopic} for an illustration.
Similar constructions have been encountered for scaling limits of mean-field random graphs at criticality~\cite{ABBG10} or at the discrete level in high-genus unicellular maps~\cite{Lou21}.
Start either from the dual of the UIPT (type 1) in the planar case, or from the three-regular tree  in the unicellular case; this will play the role of the kernel. Then in order to take into account the root of the full map, let us modify their root edge by inserting a middle vertex and attaching a dandling leaf to it on its right to get a infinite (and random in the planar case) map denoted by $\mathbb{T}_{ \mathrm{Plan}}$ or $\mathbb{T}_{ \mathrm{Unic}}$ depending on the model. We then replace each edge by an independent copy of a Brownian Continuum Random Tree (CRT) with two marked points, whose volume is exponentially biased, i.e.~we glue these trees by their distinguished points according to the graph structure. Note that in this construction, the edges of the graph become independent real segments, and their length is simply distributed according to an exponential law of mean $1/\sqrt{6}$ (with Brownian CRT's attached all along). The resulting locally compact metric space is denoted by $ \mathfrak{F}_{ \mathrm{Plan}}$ in the planar case and by $ \mathfrak{F}_{ \mathrm{Unic}}$ in the unicellular case, and is pointed at the extremity of the CRT grafted on the dangling leaf.

\begin{figure}[!ht]\centering
\includegraphics[width=.9\linewidth]{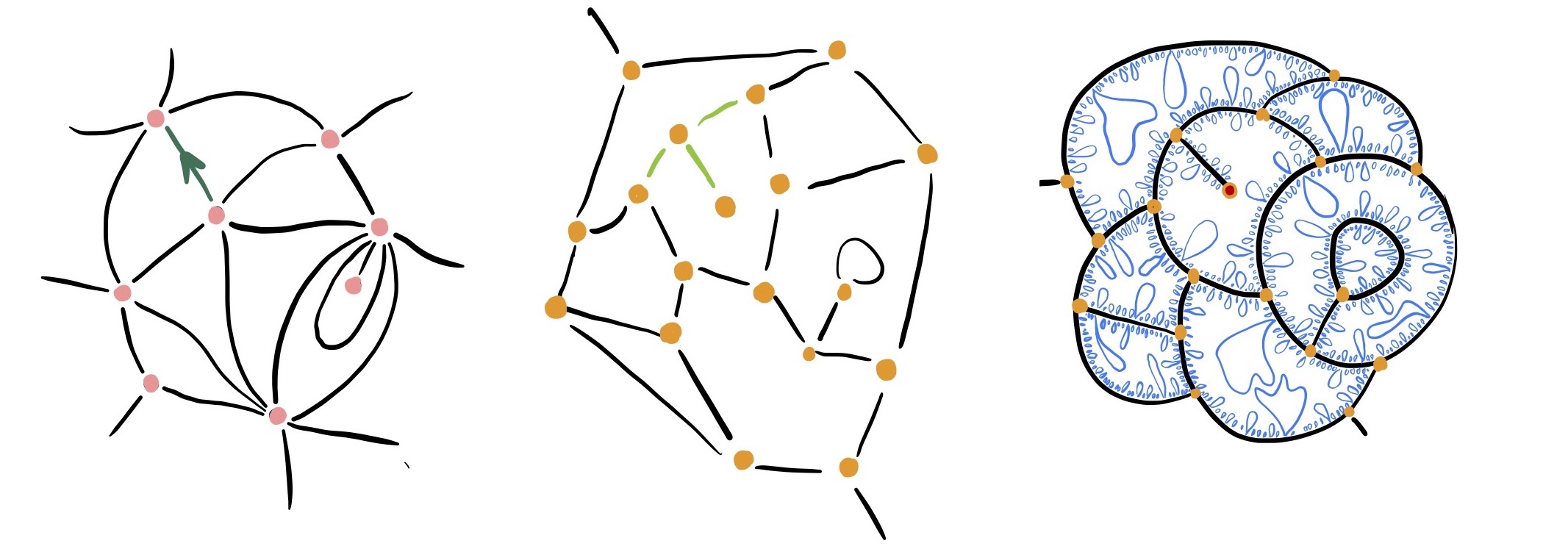}
\caption{
Left: a piece of the UIPT. Middle: the corresponding piece of its dual after performing the root transformation (in green) to get $\mathbb{T}_{ \mathrm{Plan}} $. Right: the pointed metric space $ \mathfrak{F}_{ \mathrm{Plan}}$ build out from $ \mathbb{T}_{ \mathrm{Plan}}$ by replacing edges with random trees.}
\label{fig:mesoscopic}
\end{figure}

\begin{theorem}[Mesoscopic scaling limit]
\label{thm:intermediate}
Under~\eqref{eq:standing} the convergences in distribution
\[\sqrt{ \frac{\spar_{n}}{n}} \cdot  \Map_{n}( \spar_{n},0) \cvloi \mathfrak{F}_{ \mathrm{Plan}}
\qquad \text{and} \qquad 
\sqrt{ \frac{\spar_{n}}{n}} \cdot  \Map_{n}\left(1,\frac{\spar_{n}-1}{2} \right) \cvloi \mathfrak{F}_{ \mathrm{Unic}}\]
hold in the local pointed Gromov--Hausdorff topology.
\end{theorem}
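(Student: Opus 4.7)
The strategy is to exploit the reconstruction of $\Mapb_n^{\spar_n}$ from its kernel $\kernel_n := \Ker(\Mapb_n^{\spar_n})$ together with the family of bipointed plane trees that decorate its edges, as illustrated in Figure~\ref{fig:splitarbre}, and then to prove separately a local limit for $\kernel_n$ and a metric scaling limit for the decorations. Setting $K_n := \#\mathrm{Edges}(\kernel_n)$, Theorem~\ref{thm:defauts_ker} combined with Euler's formula~\eqref{eq:Euler_defauts} yields $K_n = 3\spar_n(1+o_{\P}(1))$ together with a defect $\Defect(\kernel_n) = o_{\P}(\spar_n)$, so that $\kernel_n$ is in both cases essentially trivalent. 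In the planar case I would then invoke Budzinski's local limit theorem~\cite{Bud21} for uniform essentially trivalent plane maps to obtain that $\kernel_n$ converges locally to the dual of the \textrm{UIPT}~(type~1); applying the root transformation of Figure~\ref{fig:splitarbre}, which inserts a subdivision vertex on the root edge and grafts a dangling leaf, upgrades this to local convergence towards $\mathbb{T}_{\mathrm{Plan}}$. In the unicellular case I would use instead the configuration-model analysis of Section~\ref{ssec:unicellularestimates} to show that essentially trivalent unicellular maps of genus tending to infinity converge locally to the three-regular tree, and the same root transformation provides convergence to $\mathbb{T}_{\mathrm{Unic}}$.

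The second step is the scaling limit of the decorations. By Proposition~\ref{prop:loi_core_kernel}, conditionally on $\kernel_n$ the ordered family $(\tau_e)_{e \in \mathrm{Edges}(\kernel_n)}$ of bipointed plane trees is uniform among bipointed forests with $K_n$ components and $n$ edges in total, with the additional oriented root edge carried by the first component. Since a uniform plane tree with $N$ edges is a critical Galton--Watson tree with Geom$(1/2)$ offspring conditioned to have size $N$, Aldous's invariance principle gives, after rescaling by $\sqrt{2/N}$, convergence to the Brownian CRT; the extra uniform two-point marking biases the distribution of the volume, and standard forest decompositions (in the spirit of~\cite{ABBG10}) show that after rescaling lengths by $\sqrt{\spar_n/n}$ the family $(\tau_e)$ converges jointly in distribution to a sequence of independent copies of a bipointed Brownian CRT whose volumes are independent and exponentially biased. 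The appearance of the parameter $1/\sqrt{6}$ for the mean of the exponential law governing the rescaled distance between the two marked points of each decoration is forced by matching the size $K_n \sim 3\spar_n$ of the kernel with the rescaled total spine length $\#\mathrm{Edges}(\Core)\cdot \sqrt{\spar_n/n} \to \sqrt{3/2}$ supplied by Theorem~\ref{thm:aretes_core}.

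To conclude, the local pointed Gromov--Hausdorff topology is tested against balls of fixed radius $R$ around the distinguished vertex of $\sqrt{\spar_n/n}\cdot \Mapb_n^{\spar_n}$. Such a ball meets only finitely many decorated kernel edges, and on this finite sub-structure one can combine the local convergence of $\kernel_n$, which governs the combinatorial skeleton of the ball, with the independent CRT limits of the decorations, which govern its geometric content, and then conclude via continuity of the gluing operation in the pointed Gromov--Hausdorff topology. The main obstacle I expect is to upgrade the two convergences to a \emph{joint} convergence and to rule out atypically large decorations intersecting the ball: this requires a tightness argument based on uniform tail bounds for the volumes of bipointed Brownian CRTs, and the transfer of the global conditioning $\sum_e |\tau_e| = n$ into an essentially product measure on the bounded number of decorations relevant for the ball, in the spirit of the absolute-continuity techniques of~\cite{ABBG10,Lou21}.
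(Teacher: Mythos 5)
Your overall architecture --- local limit of the essentially trivalent kernel, scaling limit of the edge decorations, gluing along a finite ball --- is exactly the paper's (Steps 1--4 of the proof in Section~\ref{sec:mesoscopic}). The genuine difference is in the middle step. The paper does not work directly with the bipointed trees attached to kernel edges: it factors through the core, showing first that the rescaled lengths of the chains replacing a finite set of kernel edges converge to i.i.d.\ exponentials of mean $1/\sqrt{6}$ (via the uniform composition of the core size into $\Ker_n$ parts from Proposition~\ref{prop:loi_core_kernel}), and then that the forest grafted on the corners of the core, coded by the first-passage contour path $W^n$, converges after rescaling to a Brownian motion with drift $-\kappa$, $\kappa=\sqrt{3/2}$ (Proposition~\ref{prop:cv_contour_brownien_drift}); Bismut's decomposition of $\overline{\mathbf{n}}^{\kappa}$ then reassembles the exponential chain lengths and the grafted excursions into the bipointed CRT law $\P^{\bullet}$. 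Your route --- uniform bipointed forests, conditioned Galton--Watson asymptotics, and a de-conditioning argument in the spirit of~\cite{ABBG10} --- is viable and would in effect reprove Proposition~\ref{prop:volume_arbre_racine} by hand, but it hides where the rate $\sqrt{6}$ and the Bismut structure come from. Note also a numerical slip: $\#\mathrm{Edges}(\Core(\Mapb_n^{\spar_n}))\cdot\sqrt{\spar_n/n}$ is of order $\spar_n\sqrt{3/2}$, not $\sqrt{3/2}$; it is this total divided by $K_n\sim 3\spar_n$ that yields the mean spine length $1/\sqrt{6}$.

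The one genuine gap is in your final step. To get local pointed Gromov--Hausdorff convergence you must show that the ball of radius $R\sqrt{n/\spar_n}$ in $\Mapb_n^{\spar_n}$ is contained in the union of the decorations of the kernel edges at kernel-distance at most $r$ from the root, for some deterministic $r=r(R)$. The danger here is not ``atypically large decorations intersecting the ball'' --- a decoration attached to a distant kernel edge is separated from the root by the sum of the spine lengths along the kernel path leading to it, however large its volume --- but atypically \emph{short} chains: a succession of short spines could bring kernel-distant points metrically close to the root. Uniform tail bounds on CRT volumes only give upper bounds on distances and are useless here; what is needed is a \emph{lower} bound on the minimum, over the at most $3^{r}$ non-self-intersecting paths of length $r$ in the (locally trivalent) kernel ball, of the sum of $r$ essentially i.i.d.\ exponential spine lengths --- a large-deviation estimate showing this minimum exceeds $\delta r$ for some $\delta>0$ and all large $r$. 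This is exactly how the paper closes its Step~4, and without it your argument only identifies the limit of the decorated kernel balls, not of the balls of the map itself.
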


Let us refer the reader to e.g.~\cite{BBI01,CLG14,BMR19} for details on the local pointed Gromov--Hausdorff. Thus, roughly speaking, we can say that the geometry of $ \Mapb_n^{\spar_n}$ at the scale $ \sqrt{n/\spar_n}$ is a mixture of two features: a discrete part, coming from  the local limit of the kernel, and a continuous part coming from the faces which collapse on trees due to the sparse nature of the maps.

It is likely that a similar result holds in the broader context of random maps $\Map_n( \face_n, \gen_n)$ as soon as $ 1 \ll \face_n+2 \gen_n \ll n$. When $  \face_n \ll \gen_n$ or $ \gen_n \ll \face_n$ we believe that the same limits as above should be observed. However, if $\face_n/\gen_n \rightarrow  \alpha>0$, then the kernel of such maps should be essentially trivalent maps (i.e.~whose defect number is negligible compared to the size) with genus proportional to the number of faces. We conjecture that the local limit of such maps is given by the Planar Stochastic Infinite Triangulation (PSHIT) of~\cite{Cur16} with the appropriate parameter, see the work~\cite{BL21} for the purely trivalent case. The mesoscopic scaling limits of $ \Map_n( \face_n, \gen_n)$ in this last regime should then be obtained by replacing edges of the dual of these PSHIT's by bipointed CRT's as above.
This would require to extend the technical estimates of Section~\ref{sec:defauts_bis}. However, besides this, our proofs are robust enough to handle these regimes and several quantities are universal, such as the law of the bipointed Brownian CRT's.

\paragraph{Acknowledgments.}
We thank Thomas Budzinski for sharing early stages of his work~\cite{Bud21}, \'Eric Fusy for the reference~\cite{BCR93} as well as Charles Bordenave and Bram Petri for the pointer to~\cite[Theorem 2.19]{Wor99}. 
Finally, we thank the CIRM for its hospitality in January 2021 when this work was first triggered. 
The first author is supported by ERC 740943 GeoBrown.

\section{Core--Kernel decomposition and enumeration lemmas}
\label{sec:enumeration}

In this section, we describe the exact laws of the core and kernel of a uniform random map $ \Map_n(\face,\gen)$ with $n$ edges, $\face$ faces, and genus $\gen$. We then prove some technical estimates on the number of such maps that share a given kernel, which will be used later to prove our main theorems.
Let us stress that the results in this section are valid for 
all values of $\face \ge 1$ and $\gen \ge 0$.

\subsection{Law of the kernel}
\label{sec:decomposition}

Let us explain in more details the core--kernel decomposition of a map, see also~\cite{CMS09,Cha10}. Fix a (rooted) map $\kernel$ with $\face$ faces and genus $\gen$, and whose vertices all have degree at least $3$. We let $k$ denote its number of edges. Then all maps $\map$  such that $ \Ker( \map) = \kernel$ are uniquely constructed as follows. We first fix $c \geq k$, which will correspond to the number of edges of the core.

To construct $\Core(\map)$, the core of the map, from the kernel, each edge $e$ of $\kernel$ is split into say $n_{e}\geq 1$ consecutive edges by inserting vertices of degree $2$, with  $\sum_{e \in \mathrm{Edges}( \kernel)} n_{e} = c$. To count the number of possible cores, since the edges of $ \kernel$ can unambiguously be indexed by $\{1,2, \dots , k\}$,   then the number of ways of performing this splitting step is equal to the number of ways one can split a discrete cycle of $c$ edges into $k$ parts of length at least $1$, see Figure~\ref{fig:split_aretes}. Also, since we are working with rooted maps, then in order to recover the root edge of the core, we must further distinguish one of the edges produced when splitting the root edge of $\kernel$ (and we keep the same orientation). 
Consequently, the number of ways to get a given core with $c$ edges from the kernel $\kernel$ with $k$ edges equals
\begin{equation}\label{eq:coeffbinom}
\sum_{i=1}^{c-k+1} i \binom{c-i-1}{k-2} = \binom{c}{k}.
\end{equation}
Another way of describing this is as follows. First add a new vertex in the middle of the root edge of the kernel and declare the new root edge to be the one pointing towards this new vertex, with the same origin as the previous root edge. Then index the edges of this modified kernel and represent it as a segment (instead of a cycle). Expand now the $k+1$ edges of this segment into chains with $c+1$ edges in total. This amounts conversely to splitting a segment with $c+1$ edges by choosing $k$ vertices amongst the $c$ inner vertices in total. The expanded kernel is then the core, with the same modification at the root as the kernel.

\begin{figure}[!ht]\centering
\includegraphics[width=.6\linewidth]{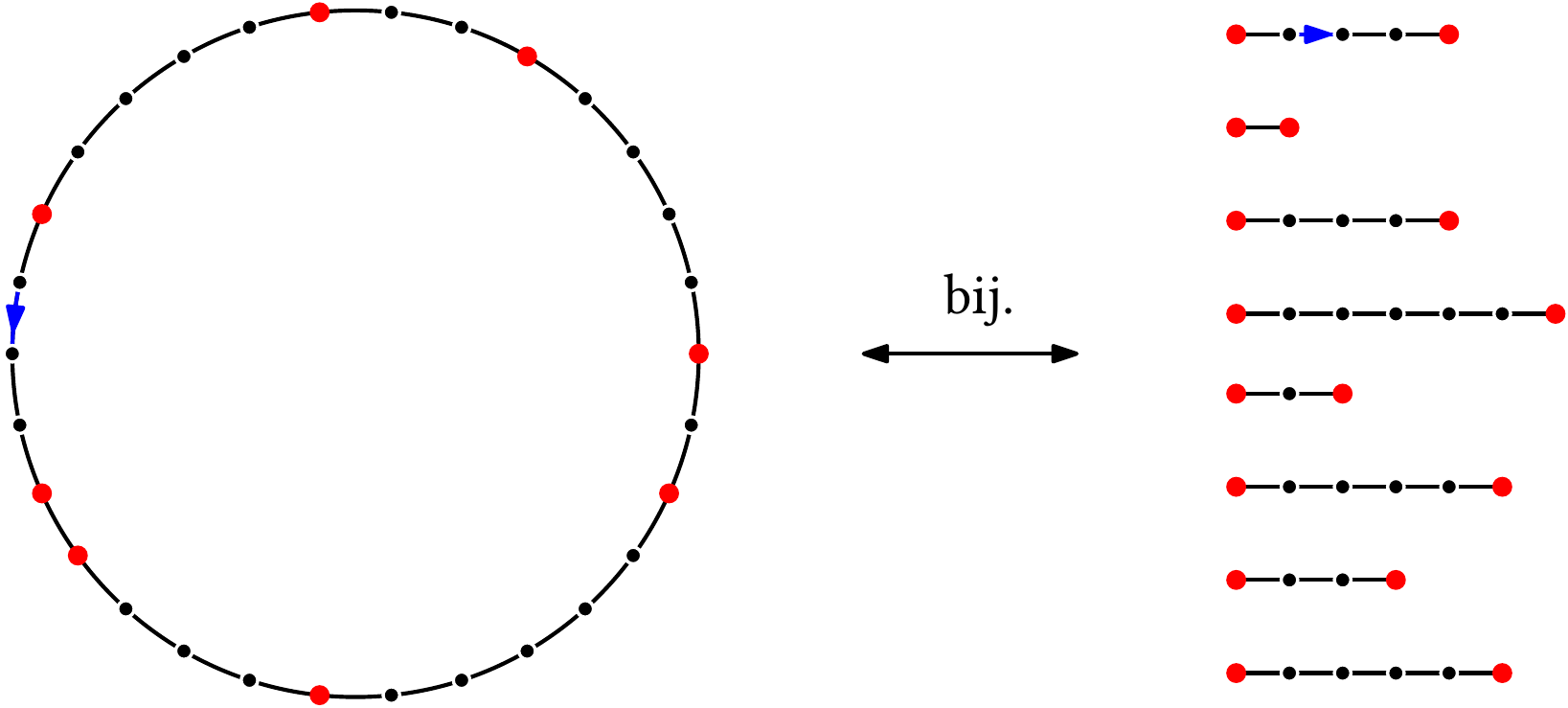}
\caption{A combinatorial representation of the repartition of the edges of the core in the kernel explaining the relation~\eqref{eq:coeffbinom}: Choosing $k$ points amongst $c$ possibilities splits a discrete cycle of $c$ edges into $k$ ordered components, the first one being that containing the root edge (in blue above).}
\label{fig:split_aretes}
\end{figure}

Once the core, with $c$ edges, is constructed, in order to recover the entire map $\map$, it remains to graft a plane rooted tree on each one of its $2c$ corners. Specifically, given an enumeration of the corners of the core from $1$ to $2c$, with $1$ being canonically the corner to the right of the tip of the root edge, we graft on the $i$'th corner a rooted plane tree $T_i$, with say $t_i \ge 0$ edges. Their size must satisfy $t_{1} + \dots + t_{2c} = n-c$. As above, we also need to keep track of the root edge of the map; we either keep the root edge of the core, or we choose one oriented edge in the tree $T_{1}$. This is equivalent to distinguishing a number in $\{0, 1, \dots , 2 t_{1}\}$.

It is classical to encode an ordered forest by a Dyck path, that follows the contour of each tree successively, with an extra negative step after each tree. Our forest is thus encoded by a path with increments either $+1$ or $-1$, which ends by hitting $-2c$ for the first time at time $2n$, with a distinguished time $k \in \{0,1, \dots , \tau-1\}$ where $\tau$ is the hitting time of $-1$ (in order to take into account the rooting). By the classical cycle lemma, this is equivalent to taking a $\pm1$ path starting at $0$ and ending at $-2c$ at time $2n$, and cyclicly shift it at the first time it reaches its overall minimum, see e.g.~\cite[Chapter~6]{Pit06} for details and Fig.~\ref{fig:contour} for an illustration.
Hence, the number of maps with $n$ edges that share a given core with $c$ edges is equal to the number of the latter paths, which is simply
\[\binom{2n}{n+c}.\]

\begin{figure}[!ht]\centering
\includegraphics[width=.9\linewidth]{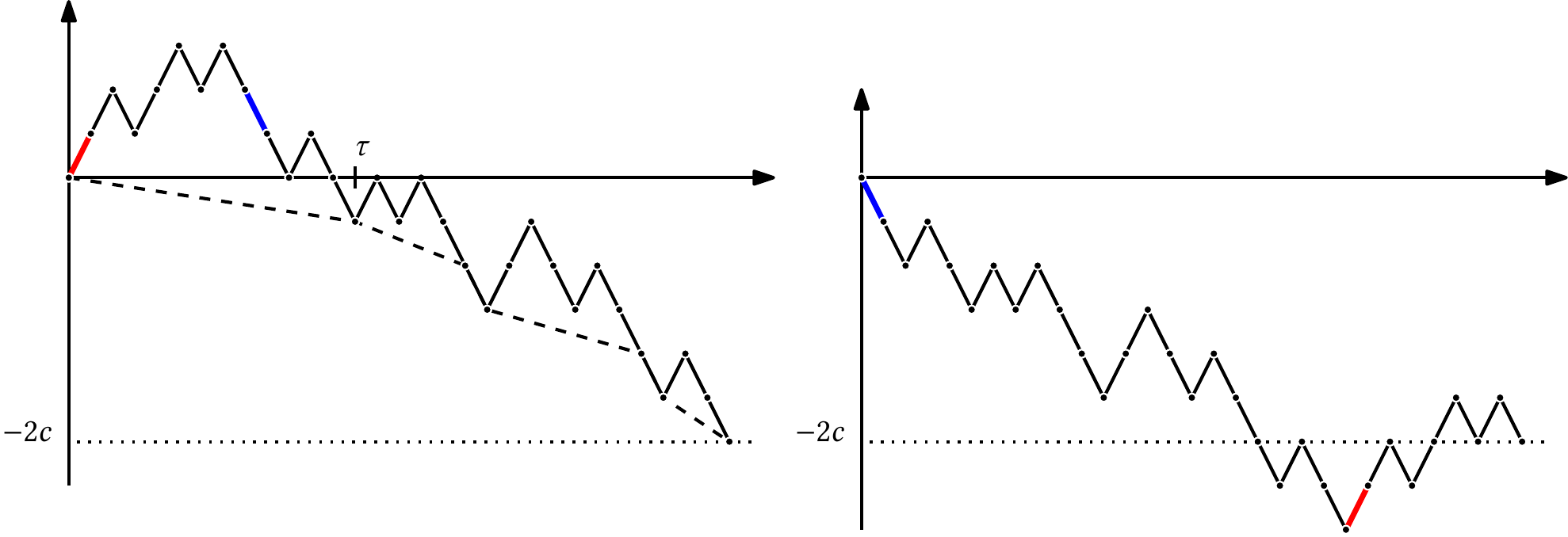}
\caption{A $\pm1$ path starting at $0$ and ending at $-2c$ at time $2n$ (right) is equivalent, after a Vervaat re-rooting at the first hitting time of the minimum to (left) a first passage bridge starting at $0$ and hitting $-2c$ for the first time at $2n$ together with a distinguished time $k \in \{0,1, \dots , \tau-1\}$ where $\tau$ is the hitting time of $-1$. By a classical bijection, those objects are in bijection with ordered plane forests with $n$ edges and $2c$ trees, together with either an oriented edge in the first tree, or another mark.}
\label{fig:contour}
\end{figure}

Observe that a map, its kernel and its core all have the same number $\face$ of faces and the same genus $\gen$, hence the same sparsity parameter $\spar = \face+2\gen$. Let us reformulate the core--kernel decomposition in probabilistic terms. For integers $1\leq k  \leq c \leq n$ let us set 
\begin{equation}\label{eq:phins}
\varphi_{n}(c,k) \coloneqq \binom{c}{k} \binom{2n}{n+c}
\qquad\text{and}\qquad
\Phi_n(k) \coloneqq  \sum_{c \geq k} \varphi_{n}(c,k).
\end{equation}
Let us extend them both by $0$ to all values of $c$ and $k$.
Then $\varphi_n(c,k)$ denotes the number of maps with $n$ edges, with a core with $c$ edges, which have a given kernel $\kernel$ with $k$ edges. Further, $\Phi_n(k)$ is the number of maps with $n$ edges which share such a given kernel. 

Recall from the Introduction that 
 $\mathcal{T}_{\defect}(\face,\gen)$ stands for the set of all rooted maps with $ \face$ faces, in genus $ \gen$, whose vertices all have  degree at least $3$,  and $\defect$ defects, i.e.~such that $\sum_{v \in \mathrm{Vertices}} (\deg(v)-3) = \defect$, or equivalently which have $3(\face+2\gen) - \defect - 6$ edges. 
We shall let $\mathcal{T}(\face,\gen) = \bigcup_{\defect \ge 0} \mathcal{T}_{\defect}(\face,\gen)$. 
We infer that the number $\#\mathfrak{M}_n(\face,\gen)$ of maps with $n$ edges, $\face$ faces, and genus $\gen$ equals
\[\#\mathfrak{M}_n(\face,\gen) = \sum_{d \ge 0} \# \mathcal{T}_{\defect}(\face,\gen)\, \Phi_n(3(\face+2\gen) - \defect - 6).\]
By Euler's formula, lower bounding the number of vertices by $1$, any map in $\mathcal{T}(\face,\gen)$ has at least $\face+2\gen-1$ edges so $\mathcal{T}_{\defect}(\face,\gen)$ is empty as soon as $\defect > 2(\face+2\gen)-5$.
Recall that $\Map_{n}(\face,\gen)$ denotes a map in $\mathfrak{M}_n(\face,\gen)$ sampled uniformly at random. 
The above discussion can be reformulated as follows.

\begin{proposition}\label{prop:loi_core_kernel}
Fix $n\ge1$, $\face \geq 1$, $\gen\ge0$, and $\kernel \in \mathcal{T}(\face,\gen)$; let $k$ denote the number of edges of $\kernel$.
\begin{enumerate}
\item We have
\[ \P \left( \Ker( \Map_n(\face,\gen)) = \kernel \right) = \frac{\Phi_{n}(k)}{\# \mathfrak{M}_n(\face,\gen)}.\]
Consequently, conditionally given its number of edges, say $\#\mathrm{Edges}(\Ker( \Map_n(\face,\gen))) = k$, the kernel is uniformly distributed over $ \mathcal{T}_\defect(\face,\gen)$ with $\defect=3(\face+2\gen)-6-k$. 

\item For any $c \geq k$, we have
\[\P \left(\#\mathrm{Edges}(\Core( \Map_n(\face,\gen))) = c  \;\middle|\; \Ker( \Map_n(\face,\gen)) = \kernel \right) = \frac{\varphi_{n}(c,k)}{\Phi_{n}(k)}.\]
Furthermore, conditionally given $\Ker( \Map_n(\face,\gen)) = \kernel$ and $\#\mathrm{Edges}(\Core( \Map_n(\face,\gen))) = c$, the core is obtained by replacing each edge of $\kernel$ by a chain of edges, 
whose lengths are given by $N_0+N_1-1$ for the root edge and $N_2, \dots, N_k$ for the other edges, where $(N_0, \dots, N_k)$ has the uniform distribution on the set of positive integer vectors which sum up to $c+1$.

\item Finally, conditionally given the core, with say $c$ edges, let us sample uniformly at random a forest $(T^n_{1}, \dots, T^n_{2c})$ with $n-c$ edges together with $\vec{e}$ being either an oriented edge in the tree $T^n_1$ or the mark $\ast$.
Then attach the above trees in the corners of the core, with $T^n_1$ in the corner to the right of the tip of the root edge, and root this map at $\vec{e}$ if it is different from $\ast$, and at the root edge of the core otherwise. Then this map has the law of $\Map_n(\face,\gen)$.
\end{enumerate}
\end{proposition}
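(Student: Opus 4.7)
The statement is essentially a reformulation in probabilistic language of the bijective description of the core--kernel decomposition laid out just before the statement, so the plan is to verify that the combinatorial bijection is well-defined and then read off the three claims by counting.

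The first step is to formalise the bijection sketched in the preceding paragraphs: I want to check that reading a map $\map \in \mathfrak{M}_n(\face,\gen)$ through the core--kernel decomposition is equivalent to the data of a triple $(\kernel,(N_0,\ldots,N_k),(T_1,\ldots,T_{2c};\vec{e}))$ consisting of a kernel $\kernel \in \mathcal{T}(\face,\gen)$ with $k$ edges, a tuple of positive integers summing to $c+1$ coding the chain lengths in the modified (segment-like) kernel, and a decorated forest with $n-c$ edges coding the trees grafted in the $2c$ corners of the core together with the extra root marker. Because the root-edge convention (adding a middle vertex in $\kernel$ and the cyclic-lemma identification for forests) is exactly what makes the rooting data global and unambiguous, I would spend some care pointing out that this association is both injective and surjective: injective since removing degree-$1$ and contracting degree-$2$ vertices is deterministic, surjective since the described grafting procedure really does produce a map in $\mathfrak{M}_n(\face,\gen)$ having the prescribed kernel, core, and trees.

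Once this bijection is in place, the three items follow by counting. For item~1, for fixed $\kernel$ with $k$ edges, summing $\varphi_n(c,k)=\binom{c}{k}\binom{2n}{n+c}$ over $c\ge k$ gives exactly $\Phi_n(k)$ preimages, hence the first formula, and since $\Phi_n(k)$ depends on $\kernel$ only through $k$, the conditional distribution given the number of edges of the kernel is uniform on $\mathcal{T}_\defect(\face,\gen)$ with $\defect=3(\face+2\gen)-6-k$. For item~2, the conditional probability that the core has $c$ edges given the kernel equals $\varphi_n(c,k)/\Phi_n(k)$. Moreover, conditionally on $(\kernel,c)$ the number of decorated forests $\binom{2n}{n+c}$ does not depend on how the $c+1$ edges are distributed among the $k+1$ slots of the segmented kernel, so the vector $(N_0,\ldots,N_k)$ is uniform on positive compositions of $c+1$; the statement about chain lengths then just unpacks what the $N_j$'s mean after collapsing the auxiliary middle vertex of the root edge of $\kernel$ (giving $N_0+N_1-1$ edges for the root-chain of the core and $N_j$ for $j\ge 2$). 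Item~3 is then immediate: once the core is fixed, the bijection identifies the remaining data with a decorated forest with $n-c$ edges and $2c$ trees, equipped with the extra root mark in $\{0,1,\ldots,2t_1\}\cup\{\ast\}$, and uniformity on $\mathfrak{M}_n(\face,\gen)$ transports to uniformity on these decorations.

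The only genuinely delicate point is the bookkeeping of the root edge in both extremes of the decomposition. Getting the binomial identity~\eqref{eq:coeffbinom} rigorously—equivalently, verifying that adding a middle vertex to the root of the kernel and representing the resulting object as a segment turns the core-recovery step into a uniform composition—is the step where it is easiest to slip; I would therefore spell out this reduction carefully, most likely through the \emph{cycle-lemma} perspective already mentioned in the text, which simultaneously handles the rooting of the forest through the Vervaat-style bijection between first-passage bridges and $\pm1$ paths. Once this combinatorial skeleton is rigorously in place, the three probabilistic statements are obtained by dividing the cardinalities $\Phi_n(k)$, $\varphi_n(c,k)$, and $\binom{2n}{n+c}$ by the appropriate totals, with no further estimation needed.
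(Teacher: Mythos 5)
Your proposal is correct and follows essentially the same route as the paper, which likewise treats the proposition as a direct probabilistic reformulation of the bijective core--kernel decomposition: the count $\binom{c}{k}$ of cores over a given kernel via the segment/middle-vertex representation, the count $\binom{2n}{n+c}$ of decorated forests via the cycle lemma, and the three items read off by summing and taking ratios. Your identification of the root-edge bookkeeping (including the $N_0+N_1-1$ length of the root chain) as the only delicate point matches where the paper itself concentrates its care.
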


\subsection{Technical estimates}

In this section we gather a few estimates about $\Phi_n(k) = \sum_{c \geq k} \varphi_{n}(c,k)$ and locate the values of $c$ which form the main contribution in this sum.  In particular we will prove that when $1 \ll k_{n} \ll n$, we have 
\begin{equation}\label{eq:ratioPhi1}
\frac{ \Phi_{n}(k_{n}+1)}{ \Phi_{n}(k_{n})} \equi \sqrt{ \frac{n}{2 k_{n}}}.
\end{equation}
Combined with the estimates on the number of near-trivalent maps in the next section, this will provide the basis of the proof of Theorem~\ref{thm:defauts_ker}. All these estimates are based on the exact formulas~\eqref{eq:phins} and rather elementary (but tedious) manipulations using Stirling's formula.

Fix $1 \leq k \leq n$.  We start by computing  the ratios of consecutive summands $\varphi_n(C,k)$ involved in the definition of $\Phi_n(k)$:
\begin{equation}\label{eq:def_Rat}
\mathrm{Rat}_{n}(C,k) \coloneqq  \frac{\varphi_n(C,k)}{\varphi_n(C-1,k)} = \frac{C (1 - C + n)}{(C + n) (C - k)},
\end{equation}which we extend to the whole interval $[k+1,n]$.
Then for every $C \in [k+1,n]$ we have 
\begin{equation}\label{eq:derivlog}
\frac{\partial}{\partial C} \log  \mathrm{Rat}_n(C,k)
= - \frac{k}{C(C-k)}- \frac{2n+1}{(n-C+1)(C+n)}
< 0.
\end{equation}
Therefore the function $C \mapsto  \mathrm{Rat}_n(C,k)$ is decreasing on $[k,n]$. One can check that it crosses level $1$ in the interval $[C_{n,k},C_{n,k}+1)$, where $C_{n,k}$ is defined as 
\begin{equation}\label{eq:argmax_phi}
C_{n,k} = \left\lfloor \frac{1}{4} \left( 1+k + \sqrt{ (k+1)^{2} + 8 n k } \right) \right\rfloor.
\end{equation}
Hence the maximal value of $\varphi_n(C,k)$ is attained at $C=C_{n,k}$ and it equals
\[\max_{C \ge k} \varphi_n(C,k) = \varphi_n(C_{n,k},k) = \binom{C_{n,k}}{k} \binom{2n}{n+C_{n,k}}.\]

\begin{lemma}\label{lem:phi_et_Phi}
When $k_n\to\infty$ with $k_n/n\to0$, we have
\begin{equation}\label{eq:equiv_kn}
\sqrt{\frac{k_n}{n}} \cdot \frac{\varphi_n(C_{n,k_n+1},k_n+1)}{\varphi_n(C_{n,k_n},k_n)} \cv \frac{1}{\sqrt{2}}
\qquad\text{and}\qquad
\frac{1}{\sqrt{n}} \cdot \frac{\Phi_n(k_n)}{\varphi_{n}(C_{n,k_n},k_n)} \cv \sqrt{\frac{\pi}{2}}
.\end{equation}
Also, there exist two universal constants, say $0 < a < A < \infty$, such that for every integers $n$ and $k \le n/32$, it holds
\begin{equation}\label{eq:bornes_ratio}
a\,\sqrt{\frac{n}{k}} < \frac{\varphi_n(C_{n,k+1},k+1)}{\varphi_n(C_{n,k},k)} < A\,\sqrt{\frac{n}{k}}
\qquad\text{and}\qquad
a\sqrt{n} < \frac{\Phi_n(k)}{\varphi_{n}(C_{n,k},k)} < A \sqrt{n}.
\end{equation}
Consequently, in these respective regimes,
\[\frac{a^2}{A}\,\sqrt{\frac{n}{k}}
< \frac{\Phi_n(k+1)}{\Phi_n(k)} 
< \frac{A^2}{a}\,\sqrt{\frac{n}{k}}
\qquad\text{and}\qquad
\sqrt{\frac{k_n}{n}} \cdot \frac{\Phi_n(k_n+1)}{\Phi_n(k_n)} \cv \frac{1}{\sqrt{2}}
.\]
\end{lemma}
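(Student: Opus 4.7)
The plan is first to locate the maximiser $C_{n,k}$ given by~\eqref{eq:argmax_phi}: a direct algebraic check shows that $C_{n,k} = \sqrt{nk/2}\,(1+o(1))$ whenever $k/n\to 0$, and more robustly $c_0\sqrt{nk} \le C_{n,k} \le c_1\sqrt{nk}$ uniformly for $k\le n/32$, with universal constants $c_0,c_1$. With this in hand, the first asymptotic in~\eqref{eq:equiv_kn} is obtained by a Taylor expansion of the two-variable function $\log\varphi_n(C,k)$ around $(C_{n,k_n},k_n)$. Since $C_{n,k_n}$ is (up to integer part) the maximiser in $C$, the contribution of the shift $(C_{n,k_n+1}-C_{n,k_n})\cdot \partial_C\log\varphi_n$ is negligible, and to leading order
\[\log\frac{\varphi_n(C_{n,k_n+1},k_n+1)}{\varphi_n(C_{n,k_n},k_n)} = \partial_k \log\binom{C}{k}\Big|_{(C_{n,k_n},k_n)} + o(1) = \log\frac{C_{n,k_n}-k_n}{k_n+1}+o(1) \sim \tfrac{1}{2}\log\frac{n}{2k_n},\]
where the derivative of $\log\binom{C}{k}$ is computed via Stirling using that both $k_n/C_{n,k_n}$ and $C_{n,k_n}/n$ tend to zero. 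Exponentiating delivers the announced limit $1/\sqrt{2}$.

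For the second asymptotic in~\eqref{eq:equiv_kn}, the approach is a Laplace-type approximation of $\Phi_n(k_n) = \sum_{C\ge k_n}\varphi_n(C,k_n)$. The crucial input is the curvature of $\log\varphi_n(\cdot,k_n)$ at its maximum. Since $\log\mathrm{Rat}_n(C,k)$ is the backward first difference of $\log\varphi_n(\cdot,k)$, equation~\eqref{eq:derivlog} yields at $C=C_{n,k_n}$, using $C_{n,k_n}^{2}\sim nk_n/2$,
\[\partial^2_C\log\varphi_n(C,k_n)\Big|_{C=C_{n,k_n}} \sim -\frac{k_n}{C_{n,k_n}(C_{n,k_n}-k_n)} - \frac{2n+1}{(n-C_{n,k_n}+1)(C_{n,k_n}+n)} \sim -\frac{2}{n}-\frac{2}{n} = -\frac{4}{n}.\]
A standard Riemann-sum approximation, justified by the global concavity of $\log\varphi_n(\cdot,k)$ (which bounds the tails by pure Gaussian decay), then gives
\[\Phi_n(k_n) \sim \varphi_n(C_{n,k_n},k_n)\int_\R \e^{-2x^2/n}\d x = \varphi_n(C_{n,k_n},k_n)\sqrt{\pi n/2},\]
which is the claim.

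Finally, for the uniform bounds~\eqref{eq:bornes_ratio} valid for every $k\le n/32$, I would run the same arguments quantitatively. Plugging $C_{n,k}\asymp\sqrt{nk}$ into~\eqref{eq:derivlog} produces universal constants $c_2,c_3>0$ such that $-c_2/n \le \partial_C\log\mathrm{Rat}_n(C,k) \le -c_3/n$ throughout an $O(\sqrt{n})$-window around $C_{n,k}$; combined with the overall concavity of $\log\varphi_n(\cdot,k)$ that controls tails beyond this window by Gaussian estimates, summation yields $a\sqrt{n}\le\Phi_n(k)/\varphi_n(C_{n,k},k)\le A\sqrt{n}$. The ratio estimate for $\varphi_n(C_{n,k+1},k+1)/\varphi_n(C_{n,k},k)$ is produced similarly, replacing asymptotic Stirling by its non-asymptotic version with explicit error terms. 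Both remaining inequalities of the lemma then follow by merely combining these bounds, and the final limit $\sqrt{k_n/n}\cdot\Phi_n(k_n+1)/\Phi_n(k_n)\to 1/\sqrt{2}$ is immediate from the two asymptotic statements already proved. The main obstacle is maintaining uniformity of the Laplace tail control and of the Stirling error terms down to small values of $k$; exploiting the explicit log-concavity formula~\eqref{eq:derivlog}, rather than any pointwise asymptotic expansion, is the technical ingredient that resolves it.
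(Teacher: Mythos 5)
Your proposal is correct and follows essentially the same route as the paper: locating $C_{n,k}\sim\sqrt{nk/2}$, isolating the exact increment $\binom{C}{k+1}/\binom{C}{k}=(C-k)/(k+1)\sim\sqrt{n/(2k)}$ while showing the shift $C_{n,k}\to C_{n,k+1}$ near the maximiser is negligible, and performing a Laplace summation with curvature $-4/n$ (hence the constant $\sqrt{\pi n/2}$), with the explicit monotonicity of $\mathrm{Rat}_n$ from~\eqref{eq:derivlog} supplying the uniform tail control for~\eqref{eq:bornes_ratio}. The paper organises the first step as a product over $j$ from $1$ to $C_{n,k_n+1}-C_{n,k_n}$ rather than a Taylor expansion, but this is a presentational difference only.
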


\begin{proof}
Assume that $k_n\to\infty$ with $k_n/n\to0$. By using the explicit expression~\eqref{eq:argmax_phi},  we get
\begin{equation}\label{eq:estimee_Ckn}
C_{n,k_n} \equi \sqrt{\frac{n k_n}{2}}
\qquad\text{and}\qquad
C_{n,k_n+1} - C_{n,k_n} \equi \sqrt{\frac{n}{8 k_n }}
\end{equation}
Let us first compare $\varphi_{n}(C_{n,k_{n}+1},k_{n}+1)$ with $\varphi_{n}(C_{n,k_{n}},k_{n})$.  By definition, we have
\begin{equation}\label{eq:ratio_phi}
\frac{\varphi_{n}(C_{n,k+1},k)}{\varphi_{n}(C_{n,k},k)}
= \frac{\binom{C_{n,k+1}}{k}}{\binom{C_{n,k}}{k}} \frac{\binom{2n}{n+C_{n,k+1}}}{\binom{2n}{n+C_{n,k}}}
= \prod_{j=1}^{C_{n,k+1}-C_{n,k}}  \left( 1+\frac{k}{C_{n,k}+j-k} \right) \left( 1- \frac{C_{n,k+1}+C_{n,k}}{C_{n,k}+j+n}\right),
\end{equation}
By taking $k=k_{n}$ and simply bounding each term by taking either $j=1$ or $j=C_{n,k_{n}+1}-C_{n,k_{n}}$ and using the estimates~\eqref{eq:estimee_Ckn} , we get
\begin{align*}
\ln \frac{\varphi_{n}(C_{n,k_n+1},k_n)}{\varphi_{n}(C_{n,k_n},k_n)}
&\equi \sqrt{\frac{n}{8k_n}} \ln\left(\left(1+\sqrt{\frac{2k_n}{n}}\right) \left(1-\sqrt{\frac{2k_n}{n}}\right)\right)
\equi - \sqrt{\frac{k_n}{2n}}
,\end{align*}
which converges to $0$. In addition,
\[\frac{\varphi_{n}(C_{n,k_n+1},k_n+1)}{\varphi_{n}(C_{n,k_n+1},k_n)}
= \frac{\binom{C_{n,k_n+1}}{k_n+1}}{\binom{C_{n,k_n+1}}{k_n}}
= \frac{C_{n,k_n+1} - k_n}{k_n+1}
\equi \sqrt{\frac{n}{2k_n}}
.\]
Therefore
\[\frac{\varphi_{n}(C_{n,k_n+1},k_n+1)}{\varphi_{n}(C_{n,k_n},k_n)}
= \frac{\varphi_{n}(C_{n,k_n+1},k_n+1)}{\varphi_{n}(C_{n,k_n+1},k_n)} \frac{\varphi_{n}(C_{n,k_n+1},k_n)}{\varphi_{n}(C_{n,k_n},k_n)} 
\equi \sqrt{\frac{n}{2k_n}}
,\]
which establishes the first convergence in~\eqref{eq:equiv_kn}.

In the rest of the proof we assume that $n$ and $k$ are integers with $k \le n/32$.  First note that it holds
\[\sqrt{\frac{n k}{8}}
\le C_{n,k} 
\le 2 \sqrt{n k}
\qquad\text{and}\qquad
\frac{1}{6} \sqrt{\frac{n}{k}} \le C_{n,k+1} - C_{n,k} \le 2 \sqrt{\frac{n}{k}}.\]
For the last two bounds, one can use that $x/3 \le \sqrt{1+x} - 1 \le x/2$ for every $x \in[0,1]$.

The  bounds on the left-hand side of~\eqref{eq:bornes_ratio} are obtained in a similar way as before: by bounding each term in the product~\eqref{eq:ratio_phi}  by taking either $j=1$ or $j=C_{n,k+1}-C_{n,k}$, using the bounds on $C_{n,k}$ and $C_{n,k+1}-C_{n,k}$, and then using that $\exp(-x/(1-x)) \le 1-x \le \exp(-x)$ for every $x \in [0,1]$, it is straightforward, yet tedious, to show that the ratios
$\varphi_{n}(C_{n,k+1},k)/\varphi_{n}(C_{n,k},k)$
are bounded away from $0$ and infinity by universal constants.

To show the second convergence of~\eqref{eq:equiv_kn} and the bounds on the right-hand side of~\eqref{eq:bornes_ratio}, we start by comparing $\Phi_n(k)$ with $\varphi_{n}(C_{n,k},k)$.
Recall the quantity $\mathrm{Rat}_{n}(C,k)$ introduced in~\eqref{eq:def_Rat}. 
Using the bounds on $C_{n,k}$ above, one can crudely upper bound the derivative in~\eqref{eq:derivlog} around $C_{n,k}$ as follows: For every integers $n \ge k \ge 1$ and for every real number $x$ such that $|x| \le \sqrt{nk}$, we have 
\[\frac{\partial}{\partial C} \log \mathrm{Rat}_n(C_{n,k}+x,k)
\le - \frac{k}{(3\sqrt{nk})^2}- \frac{2n+1}{(n+3\sqrt{nk})^2}
\le - \frac{1}{5n}
.\]
Moreover, if $k_n \to\infty$ with $k_n = o(n)$, then 
one can check that, uniformly for $x$ such that $|x| \leq \sqrt{n} k_{n}^{1/4}$, we have
\[n \cdot \frac{\partial}{\partial C} \log  \mathrm{Rat}_n(C_{n,k_{n}}+x,k_{n}) \cv -4.\]
Writing $ \log \mathrm{Rat}_n(C_{n,k}+x,k) = \int_{1}^{x} \frac{\partial}{\partial C} \log \mathrm{Rat}_n(C_{n,k}+u,k) \d u+\log \mathrm{Rat}_n(C_{n,k}+1,k)$ and using the fact that $\log \mathrm{Rat}_n(C_{n,k}+1,k) \leq 0$ by definition of $C_{n,k}$, 
this entails that for any $n \ge k \ge 1$ and any integer $j$ such that $|j| \le \sqrt{nk}$,
\begin{equation} \label{eq:borne_unif_phi}
\varphi_{n}(C_{n,k}+j,k) \le \varphi_{n}(C_{n,k},k) \cdot \exp\left(- \frac{j(j-1)}{10n}\right),
\end{equation}
and uniformly for all integers $j$ such that $|j| \leq \sqrt{n} k_{n}^{1/4}$, we have
\begin{equation}\label{eq:locallimitCLT}
\varphi_{n}(C_{n,k_{n}}+j,k_{n}) \equi \varphi_{n}(C_{n,k_{n}},k_{n}) \cdot \exp\left(-\frac{2j(j-1)}{n}\right).
\end{equation}
Recalling that $k_{n} \to \infty$, we infer from~\eqref{eq:locallimitCLT}:
\[\sum_{|j| \leq \sqrt{n} k_{n}^{1/4}}  \frac{\varphi_{n}(C_{n,k_{n}}+j,k_{n})}{\varphi_{n}(C_{n,k_{n}},k_{n})}
\equi  \sum_{|j| \leq \sqrt{n} k_{n}^{1/4}} \exp\left(- \frac{2j^2}{n}\right)
\equi \sqrt{\frac{n \pi}{2}}
.\]
Also for any values $n \ge k \ge 1$ it holds
\[\sum_{|j| \leq \sqrt{n} k^{1/4}}  \frac{\varphi_{n}(C_{n,k}+j,k)}{\varphi_{n}(C_{n,k},k)}
\le \sqrt{20 n \pi}
.\]
Recall next that $ C \mapsto  \mathrm{Rat}_n(C,k)$ is decreasing and its value at $C_{n,k}+j$ is bounded above by $\exp(-j/(5n))$ as soon as $|j| \le \sqrt{nk}$. Therefore, for any $|j| \ge\sqrt{n}$, by applying~\eqref{eq:borne_unif_phi} at $\pm\sqrt{n}$ we infer that
\[\varphi_{n}(C_{n,k}+j,k) \le \varphi_{n}(C_{n,k},k) \cdot \exp\left(- \frac{|j|-\sqrt{n}}{5\sqrt{n}}\right)
.\]
Consequently, for any $n\ge k \ge 1$,
\[ \sum_{|j| > \sqrt{n} k^{1/4}}  \frac{\varphi_{n}(C_{n,k}+j,k)}{\varphi_{n}(C_{n,k},k)}
\leq \e^{1/5} \sum_{|j| > \sqrt{n} k^{1/4}} \exp\left(- \frac{|j|}{5\sqrt{n}}\right)
\leq 10 \e^{1/5} \sqrt{n} \exp(- k^{1/4}/5)
.\]
In particular,  if $k_n \to\infty$ with $k_n = o(n)$, then we conclude that
\[\frac{\Phi_n(k_n)}{\sqrt{n} \varphi_{n}(C_{n,k_n},k_n)} \cv \sqrt{\frac{\pi}{2}}.\]
In addition, for every $n \ge k \ge 1$ it holds
\[\frac{\Phi_n(k)}{\sqrt{n} \varphi_{n}(C_{n,k},k)}
\le \sum_{|j| < \sqrt{n}}  \frac{\varphi_{n}(C_{n,k}+j,k)}{\sqrt{n} \varphi_{n}(C_{n,k},k)} + \sum_{|j| \ge \sqrt{n}}  \frac{\varphi_{n}(C_{n,k}+j,k)}{\sqrt{n} \varphi_{n}(C_{n,k},k)}
\le 13,\]
since the second sum is bounded by $10$ as above and the first one by $3$ since $\varphi_{n}(C_{n,k},k) = \max_C \varphi_{n}(C,k)$.

On the other hand we can also lower bound the left hand side of the last display when $k \le n/32$. Indeed in this case it holds $k \le C_{n,k}/4$ and $C_{n,k} \le n/4$ so if $|x| \le \sqrt{nk}/16 \le C_{n,k}/4$, then
\[\frac{\partial}{\partial C} \log  \mathrm{Rat}_n(C_{n,k}+x,k)
\ge - \frac{8k}{3 C_{n,k}^2} - \frac{2n+1}{(1+11n/16) n}
\ge - \frac{64}{3 n} - \frac{48}{11n}
\ge -26n.\]
Then as above we infer that
\[\frac{\Phi_n(k)}{\sqrt{n} \varphi_{n}(C_{n,k},k)}
\ge \sum_{|j| \leq \sqrt{nk}/16} \frac{\varphi_{n}(C_{n,k}+j,k)}{\sqrt{n} \varphi_{n}(C_{n,k},k)}
\ge \sum_{|j| \leq \sqrt{nk}/16} n^{-1/2} \exp\left(-\frac{13 j^2}{n}\right)
\ge \int_{|u| \le \sqrt{k}/16} \exp\left(-13 u^2\right) \d u
,\]
and the right hand side is larger than $1/20$.
\end{proof}

\subsection{Two applications}

We gather here two applications of the technical estimates of Lemma~\ref{lem:phi_et_Phi}. The first one gives an asymptotic on the  number $\Phi_{n}(k_{n})$ of maps with $n$ edges which have a given kernel with $k_{n}$ edges, in the regime $k_n = O(n^{1/3})$. This will be used in Section~\ref{ssec:asymp_enum} to establish in turn the asymptotic estimates of  Corollary~\ref{cor:enumeration}.

\begin{corollary}
\label{cor:equivPhikn}
Assume that $k_{n}^3/n \to \gamma \in [0,\infty)$. Then
\[ \Phi_{n}(k_{n})  \enskip \mathop{\sim}_{n \rightarrow \infty} \enskip   
\frac{\e^{- \sqrt{\gamma/2}}}{2\sqrt{2 \pi k_{n}}} \cdot 4^{n}  \cdot \left(  \frac{\e n}{2 k_{n}}\right)^{k_{n}/2}
.\]
\end{corollary}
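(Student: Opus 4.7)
The plan is to combine the reduction from Lemma~\ref{lem:phi_et_Phi} with Stirling's formula applied to the maximal summand. Since $k_n^3/n\to\gamma<\infty$ together with $k_n\to\infty$ forces $k_n/n\to 0$, the second convergence in \eqref{eq:equiv_kn} applies and gives
\[\Phi_n(k_n) \;\sim\; \sqrt{\pi n/2}\cdot\varphi_n(C_{n,k_n},k_n) \;=\; \sqrt{\pi n/2}\cdot\binom{C_{n,k_n}}{k_n}\binom{2n}{n+C_{n,k_n}}.\]
It therefore suffices to evaluate this single product asymptotically and then multiply by $\sqrt{\pi n/2}$.

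To apply Stirling cleanly, I would first sharpen the estimate \eqref{eq:estimee_Ckn} by Taylor-expanding $\sqrt{(k_n+1)^2+8nk_n}$ in \eqref{eq:argmax_phi}, obtaining $C_{n,k_n} = \sqrt{nk_n/2} + (k_n+1)/4 + O(1)$. Under $k_n^3/n\to\gamma$ and $k_n\to\infty$, this yields the two useful expansions $C_{n,k_n}^2/n = k_n/2 + \sqrt{\gamma/8} + o(1)$ and $C_{n,k_n} = \sqrt{nk_n/2}\,(1+\delta_n)$ with $k_n\delta_n\to\sqrt{\gamma/8}$. Stirling is then applied to both binomials: the local central limit theorem gives $\binom{2n}{n+C}\sim(4^n/\sqrt{\pi n})\exp(-C^2/n)$, valid since the error of order $C^4/n^3$ tends to $0$ when $C=O(n^{2/3})$, while the refined Stirling expansion for binomial coefficients with $k\to\infty$ and $k/C\to0$ gives
\[\binom{C}{k} \;\sim\; \frac{(C\e/k)^{k}}{\sqrt{2\pi k}}\,\exp\!\left(-\frac{k^2}{2C}\right).\]
The crucial point is that the Stirling correction $-k_n^2/(2C_{n,k_n})\to-\sqrt{\gamma/2}$ produces the announced factor $\e^{-\sqrt{\gamma/2}}$, which is absent from the naive estimate $\binom{C}{k}\sim C^k/k!$.

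Multiplying the two expansions, the subleading correction $(k_n+1)/4$ to $\sqrt{nk_n/2}$ contributes a factor $\e^{-\sqrt{\gamma/8}}$ via $\exp(-C_{n,k_n}^2/n)$ and a factor $\e^{+\sqrt{\gamma/8}}$ via $(1+\delta_n)^{k_n}$ inside $(C_{n,k_n}\e/k_n)^{k_n}$; these cancel exactly. This cancellation is expected since $C_{n,k_n}$ is the saddle point of $c\mapsto\varphi_n(c,k_n)$, so first-order perturbations around $\sqrt{nk_n/2}$ cannot affect the leading asymptotic of the maximal summand; verifying it carefully is the main technical obstacle. Once observed, the remaining algebra is routine: the surviving $\e^{-\sqrt{\gamma/2}}$ factor, together with Stirling's formula for $k_n!$ to expose the $\sqrt{2\pi k_n}$ normalization, and the polynomial prefactors $\sqrt{\pi n/2}$ and $1/\sqrt{\pi n}$, combine with the exponential parts $4^n\cdot(\e n/(2k_n))^{k_n/2}$ to assemble into the claimed equivalent.
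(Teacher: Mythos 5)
Your proposal follows the paper's proof essentially step for step: reduce to the maximal summand via the second convergence in \eqref{eq:equiv_kn}, so that $\Phi_n(k_n)\sim\sqrt{\pi n/2}\,\binom{C_{n,k_n}}{k_n}\binom{2n}{n+C_{n,k_n}}$, and then apply Stirling at the saddle point using the expansions $C_{n,k_n}=\sqrt{nk_n/2}+(k_n+1)/4+O(1)$ and $C_{n,k_n}^2/n=k_n/2+\sqrt{\gamma/8}+o(1)$. Your bookkeeping of the exponential factors is right and agrees with the paper's: the correction $(1+\delta_n)^{k_n}\to\e^{\sqrt{\gamma/8}}$ cancels against the $\e^{-\sqrt{\gamma/8}}$ coming from $\exp(-C_{n,k_n}^2/n)$, and the surviving $\exp(-k_n^2/(2C_{n,k_n}))\to\e^{-\sqrt{\gamma/2}}$ (here $k_n^3/C_{n,k_n}^2\sim 2k_n^2/n\to0$, so the second-order Stirling term is indeed all that matters).

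The one step you should not dismiss as ``routine'' is the final multiplication of the polynomial prefactors, because as you have set things up it does \emph{not} produce the stated constant. With your (correct) normalisation $\binom{2n}{n+C}\sim 4^n(\pi n)^{-1/2}\exp(-C^2/n)$, the constants combine as $\sqrt{\pi n/2}\cdot(2\pi k_n)^{-1/2}\cdot(\pi n)^{-1/2}=(4\pi k_n)^{-1/2}$, giving $\Phi_n(k_n)\sim\frac{\e^{-\sqrt{\gamma/2}}}{2\sqrt{\pi k_n}}\,4^n(\e n/(2k_n))^{k_n/2}$, which is $\sqrt2$ times the constant in the statement. The paper's own proof writes $\binom{2n}{n+C}\sim 4^n(2\pi n)^{-1/2}\exp(-C^2/n)$ and thereby lands on $\frac{1}{2\sqrt{2\pi k_n}}$; but Stirling gives $\binom{2n}{n}\sim 4^n/\sqrt{\pi n}$, so your normalisation is the right one and the $\sqrt2$ appears to be a slip in the source (formally comparing the unicellular case of Corollary~\ref{cor:enumeration} with the classical fixed-genus asymptotics $\#\mathcal{T}$-free count $\epsilon_\gen(n)\sim n^{3\gen-3/2}4^n/(12^\gen \gen!\sqrt{\pi})$ points the same way). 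So carry the constants through explicitly rather than asserting the assembly; you will find the discrepancy, and you should then either correct your local limit theorem or the target constant, not silently reconcile them.
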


\begin{proof}
Recall from~\eqref{eq:argmax_phi} the explicit expression of $C_{n,k_{n}}$. Note that $k_{n}^{2}/C_{n,k_{n}} \rightarrow \sqrt{2 \gamma}$ and that $C_{n,k_n}^{3}/n^{2} \rightarrow \sqrt{\gamma/8}$. By Lemma~\ref{lem:phi_et_Phi},
\[ \Phi_{n}(k_{n})  \enskip \mathop{\sim}_{n \rightarrow \infty} \enskip \sqrt{ \frac{\pi n}{2}} \cdot \varphi_{n}(C_{n,k_n},k_n)=    \sqrt{ \frac{\pi n}{2}} \cdot \binom{C_{n,k_{n}}}{k_{n}} \binom{2n}{n+C_{n,k_{n}}}. \]
Using Stirling's formula, we get 
\[ \binom{C_{n,k_{n}}}{k_{n}}  = \frac{1}{\sqrt{2 \pi k_{n}}} \exp \left( -C_{n,k_n} \ln \left( 1- \frac{k_{n}}{C_{n,k_n}}\right)+ k_{n} \ln C_{n,k_n}+ k_{n} \ln \left( 1- \frac{k_{n}}{C_{n,k_n}}\right)-k_{n} \ln k_{n} +o(1) \right).\]
A second order asymptotic expansion of $\ln(1-k_{n}/C_{n,k_n})$ combined with the fact that $k_{n }\ln(C_{n,k_{n}})= \frac{k_{n}}{2} \ln(n k_{n}/2)+\sqrt{\gamma/8}+o(1)$ yields
\[ \binom{C_{n,k_{n}}}{k_{n}}  \enskip \mathop{\sim}_{n \rightarrow \infty} \enskip  \frac{1}{\sqrt{2 \pi k_{n}}} \e^{k_{n}-\sqrt{\gamma/8}}  \left(  \frac{n}{2 k_{n}}\right)^{k_{n}/2}.\]
Similarly, by Stirling's formula,
\begin{align*}
 \binom{2n}{n+C_{n,k_{n}}}  & =  \frac{2^{2n}}{\sqrt{ 2 \pi n}}  \exp \left(-n \ln \left( 1- \frac{C_{n,k_n}^{2}}{n^{2}} \right)-C_{n,k_n}\ln \left( 1+ \frac{C_{n,k_n}}{n} \right) + C_{n,k_n}\ln \left( 1- \frac{C_{n,k_n}}{n}\right)+o(1)\right) \\
 &= \frac{4^n}{\sqrt{ 2 \pi n}}   \exp \left( - \frac{C_{n,k_n}^{2}}{n}+o(1) \right).
 \end{align*}
Using the fact that $C_{n,k_n}^{2}/n=k_{n}/2+ \sqrt{\gamma/8}+o(1)$, we infer that
\[  \binom{2n}{n+C_{n,k_{n}}}  \quad \mathop{\sim}_{n \rightarrow \infty} \quad \frac{1}{\sqrt{ 2 \pi n}} 4^{n} \e^{-k_{n}/2-\sqrt{\gamma/8}}.\]
This completes the proof.
\end{proof}

The second application, in conjunction with Proposition~\ref{prop:loi_core_kernel}, provides a Central Limit Theorem for the  size of the core conditionally given that of the kernel, as mentioned in the introduction.

\begin{corollary}\label{cor:TCL_core}
Let $(k_n)_n$ be a sequence of integers such that both $k_n \to \infty$ and $n^{-1} k_n\to0$ and let $\mathcal{C}_{n}$ denote a random variable with distribution given by
\[ \P( \mathcal{C}_{n}=c)= \frac{\varphi_{n}(c,k_{n})}{ \Phi_{n}(k_{n})}, \qquad c \geq k_{n}.\]
Then
\[2 \cdot \frac{ \mathcal{C}_{n}-c_n}{ \sqrt{n}} \cvloi N(0,1),
\qquad\text{where}\qquad
c_n = \frac{k_n + \sqrt{k_n^{2} + 8 n k_n }}{4} ,
\]
and where $N(0,1)$ is the standard Gaussian distribution.
\end{corollary}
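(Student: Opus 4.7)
The proof is essentially a routine passage from a local limit theorem to a central limit theorem, since Lemma~\ref{lem:phi_et_Phi} and in particular the local estimate~\eqref{eq:locallimitCLT} already do all the heavy lifting. The plan is to show that the point masses $\P(\mathcal{C}_n = C_{n,k_n}+j)$ can be approximated, uniformly in a window of size $\sqrt{n}\,k_n^{1/4}$ around $C_{n,k_n}$, by a discretized Gaussian with variance $n/4$, and to control the tails outside this window by~\eqref{eq:borne_unif_phi}.

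First I would replace the centering $c_n$ by $C_{n,k_n}$: a direct computation from~\eqref{eq:argmax_phi} shows that $|C_{n,k_n}-c_n|\le 1+O\bigl((k_n+1)/\sqrt{k_n^2+8nk_n}\bigr)=O(1)+O(\sqrt{k_n/n})$, so $(C_{n,k_n}-c_n)/\sqrt n\to 0$ under the assumption $k_n=o(n)$, and it is enough to prove $2(\mathcal{C}_n-C_{n,k_n})/\sqrt n\to N(0,1)$.

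Next, for a fixed Borel set $[a,b]\subset\R$, I would write
\[
\P\!\left(a\le\frac{2(\mathcal{C}_n-C_{n,k_n})}{\sqrt n}\le b\right)
=\sum_{j\in J_n(a,b)}\frac{\varphi_n(C_{n,k_n}+j,k_n)}{\Phi_n(k_n)},
\]
where $J_n(a,b)=[a\sqrt n/2,b\sqrt n/2]\cap\Z$. Since $j\in J_n(a,b)$ satisfies $|j|=O(\sqrt n)\ll\sqrt n\,k_n^{1/4}$, the local estimate~\eqref{eq:locallimitCLT} gives $\varphi_n(C_{n,k_n}+j,k_n)\sim\varphi_n(C_{n,k_n},k_n)\cdot e^{-2j^2/n}$ uniformly in $J_n(a,b)$. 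Combining this with the second convergence of~\eqref{eq:equiv_kn}, namely $\Phi_n(k_n)\sim\sqrt{n\pi/2}\,\varphi_n(C_{n,k_n},k_n)$, the sum rewrites as
\[
\bigl(1+o(1)\bigr)\,\frac{1}{\sqrt{n\pi/2}}\sum_{j\in J_n(a,b)} e^{-2j^2/n}.
\]
The change of variables $u=2j/\sqrt n$ identifies the right-hand side as a Riemann sum for $\frac{1}{\sqrt{2\pi}}\int_a^b e^{-u^2/2}\,\d u$ with mesh size $2/\sqrt n\to 0$, which gives the desired Gaussian probability.

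Finally, to upgrade this from the compact window to genuine weak convergence I would check tightness: using~\eqref{eq:borne_unif_phi} for $|j|\le\sqrt{nk_n}$ and the geometric tail $\varphi_n(C_{n,k_n}+j,k_n)\le\varphi_n(C_{n,k_n},k_n)\cdot\exp(-(|j|-\sqrt n)/(5\sqrt n))$ for $|j|\ge\sqrt n$ (both derived in the proof of Lemma~\ref{lem:phi_et_Phi}), together with $\Phi_n(k_n)\gtrsim\sqrt n\,\varphi_n(C_{n,k_n},k_n)$, one gets $\P(|\mathcal{C}_n-C_{n,k_n}|>M\sqrt n)\lesssim\int_M^\infty e^{-2u^2}\,\d u+e^{-M/5}$, uniformly in $n$, which vanishes as $M\to\infty$. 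Nothing in this plan is subtle; the only point worth caring about is that the Riemann-sum approximation is uniform in $n$, which is immediate from the $o(1)$ in~\eqref{eq:locallimitCLT} being uniform over $j$ in the relevant window.
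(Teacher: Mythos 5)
Your argument is correct and follows essentially the same route as the paper: both deduce the CLT from the local estimate~\eqref{eq:locallimitCLT} combined with the normalisation $\Phi_n(k_n)\sim\sqrt{n\pi/2}\,\varphi_n(C_{n,k_n},k_n)$ from Lemma~\ref{lem:phi_et_Phi}, after observing that $C_{n,k_n}=c_n+o(\sqrt n)$. The paper simply states that the convergence in distribution ``immediately follows'' from the local estimate, whereas you spell out the Riemann-sum and tail-control details; there is no substantive difference.
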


\begin{proof}
Combining Lemma~\ref{lem:phi_et_Phi} and the estimate~\eqref{eq:locallimitCLT}, we get that for every $x\in\R$, 
\[\P(\mathcal{C}_{n}=\lfloor C_{n,k_{n}}+x\sqrt{n} \rfloor) \equi \sqrt{\frac{2}{n\pi}} \exp\left(-2 x^{2}\right),\]
where $C_{n,k}$ is defined in~\eqref{eq:argmax_phi}. One easily checks that $C_{n,k_{n}} = c_n + o(\sqrt{n})$, and the convergence in distribution immediately follows from the previous local estimate.
\end{proof}

\section{Near-trivalent maps}
\label{sec:defauts_bis}

We gather in this section some results on near-trivalent maps, i.e.~maps with a small defect.  Specifically, echoing~\eqref{eq:ratioPhi1}, we shall control the growth of the ratio
\begin{equation}\label{eq:ratio}
\frac{\# \mathcal{T}_{\defect+1}( \face, \gen)}{ \# \mathcal{T}_{\defect}( \face, \gen)}
\end{equation}
in the planar case $\gen=0$ and $\defect \ll \face \ll n$ as well as in the unicellular case $\face=1$ and $\defect \ll \gen \ll n$.

We shall also prove the local convergence of uniformly distributed near-trivalent maps in the planar case towards the dual of the UIPT and in the unicellular case towards the three regular tree. The main technique we develop in order to control the ratios~\eqref{eq:ratio}  as a function of $ \defect$ is a contraction operation that, starting from a trivalent map, builds a map with defects by contracting edges, which we next introduce.

 Recall that $\mathcal{T}_{\defect}{(\face,\gen)}$ denotes the set of all rooted maps with $ \face$ faces, in genus $ \gen$, whose vertices all have  degree at least $3$,  and $\defect$ defects, i.e.~such that $\sum_{v \in \mathrm{Vertices}} (\deg(v)-3) = \defect$. Also, we denote by $\Triv_{ \defect}(\face,\gen)$ a uniformly chosen element in this set.
Recall the sparsity parameter $\spar=\face+2\gen$; by~\eqref{eq:Euler_defauts} maps in $\mathcal{T}_{\defect}{(\face,\gen)}$ have $3\spar-\defect-6$ edges and $2\spar-\defect-4$ vertices.

\subsection{The contraction operation}
\label{ssec:contrac}
\label{sec:contraction}

Fix $\face \ge 1$, $\gen \ge 0$, and $\defect \ge 1$, and let $\mathfrak{t}_0 \in \mathcal{T}_0(\face, \gen)$ be a trivalent map with $\face$ faces and genus $\gen$.
Let $(e_1, \dots, e_\defect)$ be an ordered list of edges of $\mathfrak{t}_0$ which are all different from one another and all different from the root edge, and which form a forest in $\mathfrak{t}_0$; in particular, they contain no loop nor multiple edges. We henceforth call such a subset $(e_1, \dots, e_\defect)$ of edges a ``good'' subset of edges. Let $\mathrm{Contr}(\mathfrak{t}_0 ; e_1, \dots, e_\defect)$ denote the map obtained from $\mathfrak{t}_0$ by contracting these $\defect$ edges, i.e.~by removing these edges and merging their endpoints. Observe that this map has the same number $\face$ of faces and the same genus $\gen$ as $\mathfrak{t}_0$ (so the same sparsity parameter $\spar$ as well), but it now has defect number $\defect$; it is naturally rooted at the root edge of $\mathfrak{t}_0$, see Figure~\ref{fig:defect-contract} for an example. 

\begin{figure}[!ht]\centering
\includegraphics[width=.8\linewidth]{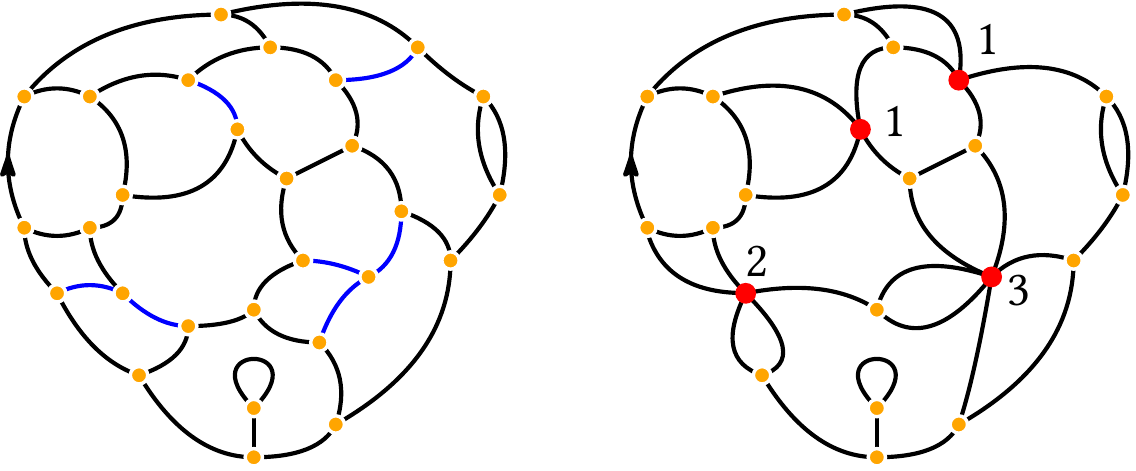}
\caption{Left: A trivalent map with a ``good'' subset of  $\defect=7$ distinguished  edges in blue. Right: the trivalent map with $\defect$ defects obtained by contracting these blue edges; the positive defects are indicated  next to  vertices with degree at least $3$.}
\label{fig:defect-contract}
\end{figure}

\begin{figure}[!ht]\centering
\includegraphics[width=.8\linewidth]{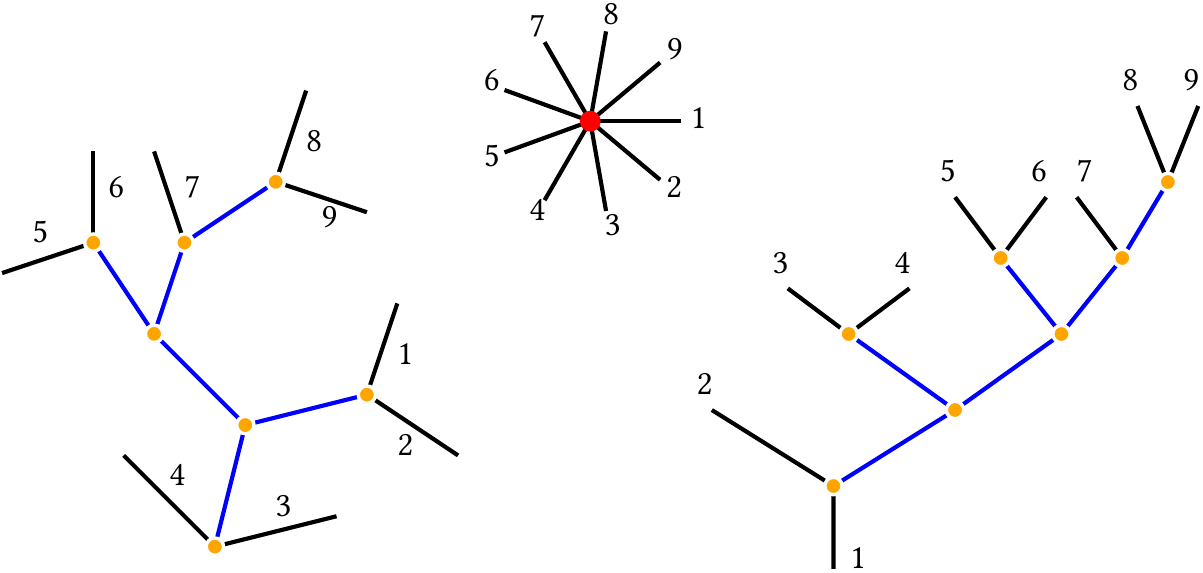}
\caption{A vertex with degree $4$ or larger (here $9$), can be blown up as a trivalent unrooted plane tree with as many leaves (left), which is equivalent to a binary tree after planting it at one leaf.
Conversely, in any tree, one contracts the internal edges to recover the original vertex.}
\label{fig:unzip}
\end{figure}

This mapping is clearly surjective since every map in $\mathcal{T}_\defect(\face, \gen)$ can be obtained from a trivalent map by contracting $\defect$ edges. In general, is not injective since several trivalent maps with a good subset of edges may yield the same map after contraction, as seen on Figure~\ref{fig:unzip}.
More precisely, fix a map $ \mathfrak{t}_\defect \in \mathcal{T}_\defect(\face, \gen)$, consider a vertex $v \in \mathfrak{t}_\defect$ with degree $\ell+3$ with $\ell \geq 1$ and label the edges around $v$ in a canonical order given $\mathfrak{t}_\defect$. Then the number of ways to locally ``blow-up'' the vertex $v$ in a trivalent tree equals the number of rooted plane binary trees (each vertex has either $0$ or $2$ children) with $\ell+2$ leaves, and the number of such trees is given by the Catalan number $\mathrm{Cat}(\ell+1) = \frac{1}{\ell+2} \binom{2\ell+2}{\ell+1}$. The previous considerations show that the number of trivalent maps $\mathfrak{t}_0$ with a distinguished ordered good subset edges $e_1, \dots, e_\defect$ that give rise to the same map $\mathfrak{t}_\defect$ with $\defect$ defects is precisely equal to\[\defect! \prod_{v \in \mathfrak{t}_\defect} \mathrm{Cat}(\deg v - 2).\]
This translates into the following relation: For every nonnegative function $F$ on $\mathcal{T}_\defect(\face, \gen)$, it holds that
\begin{equation}\label{eq:bijection_defauts}
\sum_{\mathfrak{t}_\defect \in \mathcal{T}_\defect(\face, \gen)} F(\mathfrak{t}_\defect)
= \sum_{\mathfrak{t}_0 \in \mathcal{T}_0(\face, \gen)} \sum_{(e_1, \dots, e_\defect) \text{ good}} \frac{F(\mathrm{Contr}(\mathfrak{t}_0 ; e_1, \dots, e_\defect))}{\defect! \prod_{v \in \mathrm{Contr}(\mathfrak{t}_0 ; e_1, \dots, e_\defect)} \mathrm{Cat}(\deg v - 2)}
.\end{equation}

\paragraph{A few crude estimates.}
Let us deduce a few preliminary estimates from the contraction operation before being more precise in the next sections.
The contraction operation can also be performed on a map with defects: Starting from a map with $\defect-1$ defects and one distinguished edge which is neither its root nor a loop, by contracting this edge one obtains a map with $\defect$ defects and two non consecutive distinguished corners around a vertex. Note that this operation is bijective. Hence, by simply bounding above the number of  edges by $3\spar$ on the one hand and bounding below the number of admissible pairs of corners as $\sum_{\textrm{vertex } v} \deg(v) (\deg(v)-3) / 2 \ge 2\defect$ on the other hand, we infer that for any $(\face, \gen)$ and $\defect$,
\begin{equation}\label{eq:crude1}
3 \spar \cdot \# \mathcal{T}_{\defect-1}(\face,\gen) \ge 2 \defect \cdot \# \mathcal{T}_{\defect}(\face,\gen).
\end{equation}

If $ \mathfrak{t}_0$ is a trivalent map, we set 
\[G_\defect(\mathfrak{t}_0) = \sum_{(e_1, \dots, e_{\defect}) \text{ good}} \frac{1}{\defect! \prod_{v \in \mathrm{Contr}(\mathfrak{t}_0 ; e_1, \dots, e_{\defect})} \mathrm{Cat}(\deg v - 2)}.\]
Note that $2^{n-1} \le \mathrm{Cat}(n) \le 4^{n-1}$ and that $\sum_{v \in \mathrm{Contr}(\mathfrak{t}_0 ; e_1, \dots, e_{\defect})} (\deg v - 3) = \defect$ by definition of the defect number. Also, on the one hand, the number of ordered good subsets of $\defect$ edges in a trivalent map is certainly smaller than the number of $\defect$ tuples of edges.
On the other hand, the number of such subsets is greater than the number of ways to distinguish $\defect$ edges which are not loops and such that none of them is incident to another one. Since any loop is adjacent to a non-loop edge, then in the worst case two loops may share a common neighbouring edge so the proportion of non loop edges is at least $1/3$ and each of them is incident to at most $4$ other edges.
We infer the following crude bounds for any trivalent map $\mathfrak{t}_0$:
\[4^{-\defect} \binom{\lfloor \#\mathrm{Edges}(\mathfrak{t}_0)/12\rfloor}{\defect} \le G_\defect(\mathfrak{t}_0) \le 2^{-\defect} \binom{\#\mathrm{Edges}(\mathfrak{t}_0)}{\defect}.
\]
Recall from~\eqref{eq:Euler_defauts} that $\#\mathrm{Edges}(\mathfrak{t}_0) = 3(\spar-2)$ where $\spar = \face+2\gen$ is the sparsity parameter.

Let $\Triv_\defect$ denote a uniform random map in $ \mathcal{T}_{\defect}( \face,\gen)$. Then the identity~\eqref{eq:bijection_defauts} applied twice, once with $F=1$, yields
\[\E[F(\Triv_{ \defect})]
= \frac{1}{\sum_{\mathfrak{t}_0 \in \mathcal{T}_0(\face, \gen)} G_\defect(\mathfrak{t}_0)} \sum_{\mathfrak{t}_0 \in \mathcal{T}_0(\face, \gen)} \sum_{(e_1, \dots, e_\defect) \text{ good}} \frac{F(\mathrm{Contr}(\mathfrak{t}_0 ; e_1, \dots, e_\defect))}{\defect! \prod_{v \in \mathrm{Contr}(\mathfrak{t}_0 ; e_1, \dots, e_\defect)} \mathrm{Cat}(\deg v - 2)}.\]
Using the previous bounds on $G_\defect(\mathfrak{t}_0)$, we infer that there is a constant $C>0$ such that if conditional on $\Triv_0$, the vector $(E_1, \dots, E_\defect)$ has the uniform distribution on ordered good sets of edges of $\Triv_0$, then 
\begin{equation} \label{eq:RNexp}
C^{-\defect}
\le \frac{4^{-\defect} \binom{\lfloor  (\spar-2)/4\rfloor}{\defect}}{2^{-\defect} \binom{3 (\spar-2)}{\defect}}
\le \frac{\E[F(\Triv_{ \defect})]}{\E[F(\mathrm{Contr}(\Triv_0 ; E_1, \dots, E_\defect))]}
\le \frac{2^{-\defect} \binom{3 (\spar-2)}{\defect}}{4^{-\defect} \binom{\lfloor (\spar-2)/4\rfloor}{\defect}} 
\le C^\defect,
\end{equation}
as long as $ \defect \leq \spar/100$ say. This shows that a uniform random map in  $ \mathcal{T}_{\defect}( \face,\gen)$ is obtained by contracting $ \defect$ random edges in a random trivalent map in $ \mathcal{T}_{0}(\face, \gen)$ whose Radon--Nikodym derivative with respect to the uniform law on $\mathcal{T}_{0}(\face, \gen)$ is bounded from below by $C^{- \defect}$ and above by $C^{ \defect}$.

\subsection{The planar case}
\label{ssec:defaut_planar}

In the planar case with no defect $\defect=0$, the enumeration of trivalent plane maps, or by duality, of triangulations, goes back to~\cite{MNS70}, see also Krikun~\cite{Kri07}.
Specifically, trivalent plane maps with $\face$ faces are dual to plane triangulations with $\face$ vertices and therefore,
\begin{equation}\label{eq:triangulations}
\#\mathcal{T}_0(\face,0)
= \frac{2^{2\face-3} (3\face-6)!!}{(\face-1)! \, \face!!}
\equi[\face]  \frac{1}{72 \sqrt{6\pi}} \cdot  (12\sqrt{3})^\face \cdot \face^{-5/2}.
\end{equation}
Furthermore, it is known that the local limit of uniform random plane triangulations (the dual of trivalent plane maps) is given by the Uniform Infinite Planar Triangulation (of type 1) denoted by $ \mathbb{UIPT}$ below, see~\cite[Theorem~6.1]{Ste18}, as well as~\cite{AS03} for the pioneer works in the case of type 2 or 3 triangulations. This result has recently been extended to the case of essentially trivalent planar maps (i.e.~when the defect number is negligible in front of the size of the planar map) by Budzinski~\cite[Corollary~2]{Bud21}. Passing to the dual (denoted below with a $\dagger$ symbol), this implies in our context that, provided that $ \defect = o(  \face)$, we have 
\begin{equation} \label{eq:convUIPT}
\Triv_{ \defect}(\face, 0) \cvloi[\face]  \mathbb{UIPT}^\dagger,
\end{equation}
for the local topology. 
The next result is a control on the growth of $\#\mathcal{T}_ \defect(\face,0)$.

\begin{lemma}[Asymptotic enumeration of plane maps with defects]
\label{lem:ratio_defauts}
Uniformly for $\defect = o( \face)$, it holds that
\[\frac{2 \defect}{3 \face} \cdot \frac{\#\mathcal{T}_{\defect}(\face, 0)}{\#\mathcal{T}_{\defect-1}(\face,0)} \cv[\face] 1-\lambda_{\circ},\]
where $\lambda_{\circ} = 1 - \sqrt{3}/2$.\end{lemma}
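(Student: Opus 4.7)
The plan is to combine a direct edge-contraction bijection between $\mathcal{T}_{\defect-1}(\face,0)$ and $\mathcal{T}_\defect(\face,0)$ with the local convergence~\eqref{eq:convUIPT} to the dual UIPT in order to extract the constant $1-\lambda_\circ$. First I would set up the following bijection: an ordered pair $(\mathfrak{t},\vec e)$, where $\mathfrak{t}\in\mathcal{T}_{\defect-1}(\face,0)$ and $\vec e$ is a non-root, non-loop oriented edge of $\mathfrak{t}$, corresponds to a triple $(\mathfrak{t}',v,(c_1,c_2))$ where $\mathfrak{t}'\in\mathcal{T}_\defect(\face,0)$, $v$ is a vertex of $\mathfrak{t}'$, and $(c_1,c_2)$ is an ordered pair of non-consecutive corners at $v$. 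Forward: contract $\vec e$, producing the merged vertex $v$ and the two corners immediately to the right and left of $\vec e$; inverse: split $v$ into two vertices along the cyclic arc between $c_1$ and $c_2$. Counting both sides gives
\[
\sum_{\mathfrak{t}\in\mathcal{T}_{\defect-1}(\face,0)}\bigl(2\#\mathrm{Edges}(\mathfrak{t})-2L(\mathfrak{t})-O(1)\bigr)
=\sum_{\mathfrak{t}'\in\mathcal{T}_\defect(\face,0)}\sum_v\deg(v)\bigl(\deg(v)-3\bigr),
\]
where $L$ counts loops and $\#\mathrm{Edges}=3\face-\defect-5$ by Euler's formula~\eqref{eq:Euler_defauts}.

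For the left-hand side, the convergence~\eqref{eq:convUIPT} together with the rerooting invariance of the uniform law on rooted maps (which always have trivial automorphism group) yields
\[
\frac{\E[L(\Triv_{\defect-1}(\face,0))]}{\#\mathrm{Edges}}
=\P\bigl(\text{root of }\Triv_{\defect-1}(\face,0)\text{ is a loop}\bigr)\cv[\face]\lambda_\circ,
\]
the limit being the density of loops in $\mathbb{UIPT}^\dagger$, which a direct computation on the type-1 UIPT identifies as $1-\sqrt{3}/2$; hence LHS $\sim 6\face(1-\lambda_\circ)\#\mathcal{T}_{\defect-1}(\face,0)$. For the right-hand side, using $\sum_v(\deg(v)-3)=\defect$ I decompose
\[
\sum_v\deg(v)(\deg(v)-3)=4\defect+\sum_v(\deg(v)-3)(\deg(v)-4),
\]
and the goal is to show that the error term has expectation $o(\defect)$ under the uniform law on $\mathcal{T}_\defect(\face,0)$. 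Via the contraction formula~\eqref{eq:bijection_defauts}, this reduces to bounding a weighted sum, over pairs $(\mathfrak{t}_0,S)$ with $\mathfrak{t}_0\in\mathcal{T}_0(\face,0)$ and $S$ a good $\defect$-subset of edges, of $\sum_i(k_i-1)(k_i-2)$, where $(k_i)$ are the component sizes of $S$: atomic configurations (all $k_i=2$) contribute zero, and the weighted count of non-atomic ones, scaling like $O(\defect^2/\face)$ times the atomic mass, is $o(\defect)$ under the hypothesis $\defect=o(\face)$. Combining gives RHS $\sim 4\defect\,\#\mathcal{T}_\defect(\face,0)$, and dividing yields $\tfrac{2\defect}{3\face}\cdot\#\mathcal{T}_\defect(\face,0)/\#\mathcal{T}_{\defect-1}(\face,0)\to 1-\lambda_\circ$.

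The hard part will be the uniform bound on $\E[\sum_v(\deg(v)-3)(\deg(v)-4)]$. The naive ``atomic versus one-cherry'' first-order comparison works when $\defect=o(\sqrt{\face})$, since the density of adjacent edge-pairs in a uniform trivalent plane map is $O(1/\face)$; but when $\defect$ grows faster, one must show that good subsets with larger connected components (or many size-$3$ components) all have negligible weighted contribution. For this I would exploit the fact that the Catalan weights $\mathrm{Cat}(k)\geq 2^{k-1}$ in~\eqref{eq:bijection_defauts} penalise large components exponentially while the combinatorial count of clumped configurations grows only polynomially in $\face$ per clump. A careful sum over the partition type of the contracted subgraph $S$ then shows that the total weighted mass of non-atomic configurations is $O(\defect^2/\face)$ relative to the atomic ones, which is $o(\defect)$ throughout the entire range $\defect=o(\face)$.
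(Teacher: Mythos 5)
Your edge-contraction/vertex-splitting bijection is exactly the one the paper uses to derive its crude bound~\eqref{eq:crude1}, and your double-counting identity, the evaluation of the left-hand side via rerooting invariance and the loop density of $\mathbb{UIPT}^\dagger$, and the algebra $\sum_v\deg(v)(\deg(v)-3)=4\defect+\sum_v(\deg(v)-3)(\deg(v)-4)$ are all correct. The difference with the paper is in how the right-hand side is handled. The paper does not double-count symmetrically over all splittable corners of $\Triv_\defect$; instead it writes $\#\mathcal{T}_\defect(\face,0)=\sum_{\mathfrak{t}_{\defect-1}}H(\mathfrak{t}_{\defect-1})$ where $H$ accounts only for the \emph{last} contracted edge conditionally on the previous $\defect-1$. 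The clustering of contracted edges then enters only through the trivial bound that at most $4(\defect-1)$ edges are adjacent to an already-contracted one, which perturbs the ratio by an additive $O(1)$ — negligible against the main term $\face/\defect\to\infty$. Your symmetric version instead forces you to control the full degree distribution of the kernel, i.e.\ to prove $\E\bigl[\sum_v(\deg(v)-3)(\deg(v)-4)\bigr]=o(\defect)$ under the uniform law on $\mathcal{T}_\defect(\face,0)$, a genuinely harder second-moment statement that the paper's asymmetric conditioning avoids altogether.

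That estimate is where your argument has a gap. Your concluding claim — that ``the total weighted mass of non-atomic configurations is $O(\defect^2/\face)$ relative to the atomic ones'' — is false in the regime $\sqrt{\face}\ll\defect\ll\face$: there the expected number of two-edge components is of order $\defect^2/\face\to\infty$, so the atomic configurations carry an exponentially small fraction of the total mass in~\eqref{eq:bijection_defauts} and cannot serve as the reference against which everything else is negligible. What you actually need is not a mass comparison but the bound $\E\bigl[\sum_i k_i(k_i-1)\bigr]=o(\defect)$, where $k_i$ are the edge-counts of the components of the contracted forest. A workable route is to note that $\sum_i k_i(k_i-1)$ equals the number of ordered pairs $(j,j')$, $j\neq j'$, with $e_j$ and $e_{j'}$ in the same component, so by exchangeability it suffices to prove $\P(e_1\sim e_2)=O(1/\face)$ uniformly for $\defect=o(\face)$ under the weighted law on good $\defect$-tuples; this in turn requires a switching or path-counting argument (summing over the tree path joining $e_1$ to $e_2$, each intermediate contracted edge costing a factor $O(\defect/\face)$, with the Catalan weights taming large components as you observe). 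None of this is present beyond the sketch, and it is precisely the step your approach adds over the paper's. I would either supply this estimate in full or switch to the paper's one-edge-at-a-time conditioning, which reduces the whole issue to the $O(1)$ bound $\#B\le 4(\defect-1)$.
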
 

The quantity $\lambda_{\circ}$ is the asymptotic density of self-loops in large uniform trivalent plane maps, or more precisely the probability that the root vertex of the UIPT is a leaf, lying inside a single loop; equivalently, in the dual map, the root edge is itself a single loop. The value of $\lambda_{\circ}$ can be calculated by peeling, see e.g.~\cite[Section~2]{CLG19}.

The proof of Lemma~\ref{lem:ratio_defauts} is based on the contraction operation described in Section~\ref{ssec:contrac}.
For this it will be important to count the number of edges that we can actually contract. A key input estimate is a large deviation estimate on the number of loops in a uniform trivalent plane map due to Budzinski~\cite[Theorem~2]{Bud21}, namely for any $ \varepsilon>0$ there is some $\delta>0$ such that for all $ \face \geq 1$,
\[\P\left(\left| \frac{\#\{\text{loops in } \Triv_0(\face, 0)\}}{3 \face} - \lambda_{\circ}\right| \geq \varepsilon \right) \leq \e^{-\delta \face}.\]
Since $ \defect=o(\face)$ (and since the contraction of $ \defect$ edges may create at most $4 \defect$ loops in a trivalent map), we deduce from~\eqref{eq:RNexp} that a similar large deviation holds for $ \mathrm{T}_\defect(\face,0)$.
Equivalently, replacing $\varepsilon$ by $\varepsilon/(3(1-\lambda_{\circ}))$ and up to diminishing $ \delta$, this reads
\begin{equation}\label{eq:grande_dev_loops}
\#\left\{\mathfrak{t}_{\defect}  \in \mathcal{T}_\defect(\face, 0) : \left|\#\{\text{non loop edges in } \mathfrak{t}_\defect\} - 3\face(1-\lambda_{\circ})\right|  > \varepsilon\, \face \right\} \le \e^{-\delta \face} \# \mathcal{T}_\defect(\face, 0)
.\end{equation}

\begin{proof}[Proof of Lemma~\ref{lem:ratio_defauts}]
Recall that the contraction operation relates maps with defect $\defect$ and trivalent maps with an ordered list $(e_1, \dots, e_\defect)$ of ``good'' (i.e.~contractible) edges and recall precisely the formula~\eqref{eq:bijection_defauts}.
Observe that if $(e_1, \dots, e_\defect)$ is good, then so is $(e_1, \dots, e_{\defect-1})$. Let us denote by $e_\defect^-$ and $e_\defect^+$ the two extremities of $e_\defect$ in $\mathrm{Contr}(\mathfrak{t}_0 ; e_1, \dots, e_{\defect-1})$ and by $v_\defect$ the vertex of $\mathrm{Contr}(\mathfrak{t}_0 ; e_1, \dots, e_\defect)$ obtained by contracting $e_\defect$. 
Then taking $F=1$ in~\eqref{eq:bijection_defauts}, the contribution of each given trivalent map $\mathfrak{t}_0$ to the right-hand side equals
\[\sum_{(e_1, \dots, e_{\defect-1}) \text{ good}} \frac{1}{(\defect-1)! \prod_{v \in \mathrm{Contr}(\mathfrak{t}_0 ; e_1, \dots, e_{\defect-1})} \mathrm{Cat}(\deg v - 2)} \underbrace{\sum_{\substack{e_\defect \text{ such that}\\ (e_1, \dots, e_\defect) \text{ good}}}  \frac{\mathrm{Cat}(\deg e_\defect^- - 2) \mathrm{Cat}(\deg e_\defect^+ - 2)}{\defect\cdot\mathrm{Cat}(\deg v_\defect - 2)}}_{\displaystyle \eqqcolon H( \mathrm{Contr}( \mathfrak{t}_0 ; e_1, \dots, e_{ \defect-1}))}
.\]
Note that $\deg v_\defect = \deg e_\defect^- + \deg e_\defect^+ - 2$. Then each ratio in the definition of $H$ is uniformly bounded above, say by $K < \infty$, times $1/\defect$; moreover if $e_\defect$ is not incident to any $e_i$ for $i \le \defect-1$, then $\deg e_\defect^- = \deg e_\defect^+ = 3$ and $\deg v_\defect = 4$, in which case this ratio equals $1/(2\defect)$. The idea is to prove that $H( \mathrm{Contr}( \mathfrak{t}_0 ; e_1, \dots, e_{ \defect-1}))$ actually concentrates around $3\face(1-\lambda_\circ) / (2\defect)$. Indeed, given a trivalent map $\mathfrak{t}_0$ and a subset $(e_1, \dots, e_{\defect-1})$ of good edges, letting $A$ denote the set of edges different from the root, and which are not loops nor are adjacent to any $e_i$ for $i \le \defect-1$, and letting $B$ denote the set of edges which are incident to at least one $e_i$ for $i \le \defect-1$,
we have that
\[\frac{\# A}{2\defect} 
\le H( \mathrm{Contr}( \mathfrak{t}_0 ; e_1, \dots, e_{ \defect-1}))
\le \frac{\# A}{2\defect} + K \frac{\# B}{\defect}
\le \frac{\# A}{2\defect} + 4 K \frac{\defect-1}{\defect}
.\]
Notice in passing that the cardinal $\#A$ is actually a function $\#A(\mathrm{Contr}( \mathfrak{t}_0 ; e_1, \dots, e_{ \defect-1}))$ of the map after contractions. 
Using~\eqref{eq:bijection_defauts} twice, we then find
\begin{align*}
\# \mathcal{T}_\defect(\face, 0)
&= \sum_{\mathfrak{t}_0 \in \mathcal{T}_0(\face, 0)} \sum_{(e_1, \dots, e_{\defect-1}) \text{ good}} \frac{H( \mathrm{Contr}( \mathfrak{t}_0 ; e_1, \dots, e_{ \defect-1}))}{(\defect-1)! \prod_{v \in \mathrm{Contr}(\mathfrak{t}_0 ; e_1, \dots, e_{\defect-1})} \mathrm{Cat}(\deg v - 2)} 
\\
&= \sum_{ \mathfrak{t}_{ \defect-1} \in \mathcal{T}_{ \defect-1}(\face,0) } H( \mathfrak{t}_{\defect-1})
\\
&=  \sum_{ \mathfrak{t}_{ \defect-1} \in \mathcal{T}_{ \defect-1}(\face,0)} \left(\frac{\# A( \mathfrak{t}_{  \defect-1})}{2 \defect} + O(1)\right),
\end{align*}
where the $O(1)$ is uniform in $\face$ and $\defect$ as long as $ \mathrm{d}=o(\face)$.  By the large deviation estimate~\eqref{eq:grande_dev_loops} on the number of loops in $  \mathrm{T}_{ \defect}(\face,0)$ we have that $\#A$ is strongly concentrated around $3\face (1-\lambda_{\circ})$ and in particular 
\[\frac{\# \mathcal{T}_\defect(\face, 0)}{\# \mathcal{T}_{\defect-1}(\face, 0)} 
= \E\left[\frac{\# A( \Triv_{ \defect-1}(\face,0))}{2 \defect} + O(1)\right]
\equi[\face] \frac{ 3(1-\lambda_\circ) \face}{2 \defect},\]
as desired.
\end{proof}

\subsection{The unicellular case}
\label{ssec:unicellularestimates}

Let us now present the analogous results of the last section in the unicellular case. First, parallel to~\eqref{eq:triangulations} we have the following exact formula due to Lehman \& Walsh~\cite[Eq.~(9)]{WL72}:
\begin{equation}\label{eq:triangulations_high_gen}
\#\mathcal{T}_{0}(1,\gen) = \frac{2}{12^{\gen}} \frac{(6\gen-3)!}{\gen!(3\gen-2)!}
\equi[\gen] \frac{(12 \gen)^{2\gen} \e^{-2\gen}}{12 \sqrt{\pi \gen^3}}
.\end{equation}
It is well known that the local limit of large uniform trivalent graphs is given by the three-regular tree $  \mathbb{A}_3$.
We prove below that as soon as $ \defect =o( \gen)$ we have
\begin{equation} \label{eq:convthree-regular}
\Triv_{ \defect}(1, \gen) \cvloi[\gen]  \mathbb{A}_3,
\end{equation}
for the local topology. Finally, we prove the analogue of Lemma~\ref{lem:ratio_defauts} where in the unicellular case we have $\lambda_{\circ} =0$.

\begin{lemma}\label{lem:ratio_defauts_genus}
Uniformly for $\defect = o(\gen)$, it holds that
\[\frac{2 \defect}{6 \gen} \cdot \frac{\#\mathcal{T}_{\defect}(1,\gen)}{\#\mathcal{T}_{\defect-1}(1,\gen)} \cv[\gen] 1
.\]
\end{lemma}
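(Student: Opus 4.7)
The overall plan is to mimic verbatim the proof of Lemma~\ref{lem:ratio_defauts}, updating the numerical constants to the unicellular setting. By Euler's formula~\eqref{eq:Euler_defauts} a trivalent unicellular map of genus $\gen$ has $3(\spar-2) = 6\gen-3$ edges, so the ``correct'' ratio is $\sim (6\gen)/(2\defect)$, matching the claim. The key new input needed is an analog of the large-deviation estimate~\eqref{eq:grande_dev_loops}: for every $\varepsilon>0$ there exists $\delta>0$ such that the number of loops in $\Triv_\defect(1,\gen)$ is at most $\varepsilon\gen$ with probability at least $1-\e^{-\delta\gen}$, uniformly in $\defect = o(\gen)$. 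Granted this, one introduces the quantity $H(\mathfrak{t}_{\defect-1})$ exactly as in the proof of Lemma~\ref{lem:ratio_defauts}, sandwiches it between $\#A/(2\defect)$ and $\#A/(2\defect)+O(1)$, where $\#A$ counts the non-root, non-loop edges in $\mathfrak{t}_{\defect-1}$ that are not incident to any previously contracted edge, notes that $\#A$ concentrates around $6\gen$ (since here $\lambda_\circ = 0$), and concludes via the contraction bijection~\eqref{eq:bijection_defauts}.

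To establish the loop concentration for $\defect=0$, I would use the configuration model: generate a uniformly random pairing of the $12\gen-6$ half-edges attached to $4\gen-2$ labelled trivalent vertices and condition on the resulting map being unicellular. For the unconditioned $3$-regular configuration model, the number of loops converges in distribution to a Poisson random variable with explicit finite mean, and has exponentially small upper tails by~\cite[Theorem~2.19]{Wor99}; moreover, the unconditioned model converges locally to the three-regular tree $\mathbb{A}_3$. To pass to the unicellular setting, I compare the total number of matchings with $\#\mathcal{T}_0(1,\gen)$ as given by~\eqref{eq:triangulations_high_gen}: the ``unicellularity cost'' is only polynomial in $\gen$, which is enough both to absorb the exponentially small tail on the loop count and to preserve the local convergence, yielding~\eqref{eq:convthree-regular} and the desired loop concentration for $\Triv_0(1,\gen)$. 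The extension to $\defect = o(\gen)$ then follows from the Radon--Nikodym bound~\eqref{eq:RNexp} of Section~\ref{ssec:contrac}: the law of $\Triv_\defect(1,\gen)$ is realised by contracting $\defect$ uniformly chosen good edges in a reweighted $\Triv_0(1,\gen)$ with density at most $C^\defect = \e^{o(\gen)}$, and such a contraction creates at most $4\defect = o(\gen)$ additional loops, so the estimate survives.

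The main obstacle is the configuration-model step. The standard theorems deliver unconditioned random regular multigraphs, whereas unicellular maps form a thin subset of this universe of only polynomial probability in $\gen$. However, for our purposes we only need large-deviation bounds on the number of loops, and these are exponentially small in the unconditioned model, so the polynomial unicellularity cost can be absorbed without damage. All remaining ingredients of the proof (the identity~\eqref{eq:bijection_defauts}, the definition and bookkeeping of $H$, and the verification that $\#A(\Triv_{\defect-1}(1,\gen)) = 6\gen(1+o(1))$ with high probability, noting that the $4(\defect-1)+1 = o(\gen)$ edges removed by rootedness and adjacency to prior contractions are negligible) carry over from the planar case without modification.
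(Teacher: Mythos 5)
Your proposal is correct and follows essentially the same route as the paper: the paper also reduces the lemma to a mutatis mutandis repetition of the planar argument, with the large-deviation estimate on loops obtained from the configuration model (Wormald's switching bound, Lemma~\ref{lem:grandes_dev_cycles_config}), the observation that the unicellularity conditioning costs only a polynomial factor computed from~\eqref{eq:triangulations_high_gen}, and the transfer to $\defect = o(\gen)$ via the Radon--Nikodym bound~\eqref{eq:RNexp}. The only cosmetic difference is that the paper phrases the loop estimate as the $A=1$ case of a general bound~\eqref{eq:grandes_dev_high_gen} on vertices with tree-like $A$-neighbourhoods, which it needs anyway for the local convergence~\eqref{eq:convthree-regular}.
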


Compared to the planar case (Lemma~\ref{lem:ratio_defauts}), notice here that the local limit of uniform essentially trivalent maps is a deterministic object and that the asymptotic density of loops in the local limit is $\lambda_\circ = 0$. Note also that the factor $3\face$ is now replaced by $6\gen$; in both cases, it corresponds (up to $3$) to $3\spar$ where $\spar=\face+2\gen$ is the sparsity parameter, which is (up to $O(\defect)$) the number of edges of the maps.
The proof of Lemma~\ref{lem:ratio_defauts_genus} is mutatis mutandis the same as in the planar case, we only need to replace appropriately the large deviation principle~\eqref{eq:grande_dev_loops}; this follows from~\eqref{eq:grandes_dev_high_gen} in the proof of~\eqref{eq:convthree-regular} below.

\subsubsection{Technical estimates via the configuration model} 

Let us recall the classical construction of random trivalent maps with $ \ver$ vertices using the configuration model. We let $ \ver$ be of the form $ \ver = 4 \gen -2$ for some $\gen \geq 1$, which corresponds to the number of vertices of a unicellular trivalent map with genus $\gen$.
Start with $ \ver$ tripods, i.e.~vertices having each three half-edges, hereafter called ``legs'', which are cyclically ordered, and list these legs from $1$ to $3\ver$. Note that $3 \ver$ is even since $ \ver = 4 \gen-2$ is, hence we can pair the legs using a uniform pairing over $\{1,2, \dots, 3 \ver\}$. Let us denote by $ \mathbf{P}_{\ver}$ the random labelled multi-graph obtained: it may have loops, multiple edges and may be disconnected. Furthermore, this graph has a cyclic orientation of the edges around each vertex coming from the tripods and so can be seen as a map (when it is connected) and we can speak of its faces. Moreover it is classical that conditionally on the event where $ \mathbf{P}_{\ver}$ is connected, it forms a uniform trivalent \emph{map} with $\ver$ vertices, with its half edges ordered from $1$ to $3\ver$, the first one being canonically the root. Note that this does not induce any bias since there are $3^{\ver-1} (\ver-1)!$ ways to label such a map whilst keeping the same cyclic ordering around each vertex.

We shall use the following large deviations result for the number of edges on small cycles in $ \mathbf{P}_{\ver}$, which follows by an application of the ``switching method'' from~\cite[Theorem 2.19]{Wor99}. 

\begin{lemma}[Large deviations for the small cycles]
\label{lem:grandes_dev_cycles_config}
For any $A>0$, Let $X_{ \ver}(A)$ be the number of edges of $ \mathbf{P}_{\ver}$ that belong to a non-backtracking cycle of length $\leq A$. For any $ \varepsilon>0$, there exists a constant $ \delta >0$ such that for any $\ver$ large enough we have 
\[\P( X_{ \ver}(A)  \geq \varepsilon \ver) \leq \mathrm{exp}(- \delta \ver).\]
\end{lemma}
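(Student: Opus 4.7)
The plan is to reduce the edge count $X_\ver(A)$ to a count of short non-backtracking cycles, and then to invoke the switching-method tail bounds of \cite[Theorem~2.19]{Wor99}. For each $k \in \{1,\dots,A\}$, let $C_k$ denote the number of non-backtracking cycles of length exactly $k$ in $\mathbf{P}_\ver$ (so $C_1$ counts loops and $C_2$ counts pairs of parallel edges). Since every edge contributing to $X_\ver(A)$ belongs to at least one such cycle and each such cycle contains at most $A$ edges, the deterministic inequality
\[X_\ver(A) \le A \sum_{k=1}^{A} C_k\]
holds. A union bound over the finitely many values $k \le A$ then reduces the claim to showing that for every $\eta>0$ and every fixed $k$, there is some $\delta>0$ with $\P(C_k \geq \eta \ver) \leq \exp(-\delta \ver)$ for all $\ver$ large.

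The tail bound on a single cycle count is the heart of the argument. Standard configuration-model factorial-moment computations show that $\E[C_k]$ converges to a finite constant $\lambda_k$ as $\ver \to \infty$, and that all joint factorial moments of $(C_1, \dots, C_A)$ converge to those of independent Poisson random variables with parameters $(\lambda_1, \dots, \lambda_A)$. Theorem~2.19 of \cite{Wor99} upgrades this into an effective tail inequality via the switching method: there exists a constant $c_k>0$ such that, uniformly for $\ver$ large and $t$ above a fixed threshold,
\[\P(C_k \geq t) \leq \left(\frac{c_k}{t}\right)^{t}.\]
Substituting $t = \eta \ver$ yields the required exponential-in-$\ver$ decay, and summing over $k \in \{1,\dots,A\}$ then concludes the proof.

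The one genuinely delicate point is to extract this quantitative inequality from \cite[Theorem~2.19]{Wor99}, which is classically phrased as a convergence of joint factorial moments rather than as a deviation estimate. The argument proceeds by inspecting the switching operation directly: any pairing with $C_k \ge t+1$ admits at least $t+1$ short cycles of length~$k$, each of which can be destroyed by a local re-pairing of its half-edges; counting in parallel the number of switchings which create an additional non-backtracking cycle of length~$k$ from a pairing with $C_k = t$, and comparing the two counts, one obtains $\P(C_k=t+1)/\P(C_k=t) \leq \lambda_k/(t+1)$. Iterating this estimate from a fixed threshold telescopes into the Poisson-type tail displayed above, which is exactly the effective version of the convergence of $C_k$ to $\mathrm{Poisson}(\lambda_k)$.
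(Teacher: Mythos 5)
Your reduction $X_\ver(A) \le A\sum_{k\le A} C_k$ and the union bound are fine, but the tool you lean on is not the one you think it is, and the substitute you sketch for it is the genuinely hard part of your argument, left essentially unproved. Theorem~2.19 of~\cite{Wor99} is \emph{not} a statement about convergence of joint factorial moments (that is Theorem~2.5 of the same survey, which the paper cites separately for the Poisson statistics of short cycles); it is already a deviation estimate, namely an Azuma-type concentration inequality for functions of a uniform pairing that change by at most a constant under a single switching. It does not give, and cannot be ``upgraded'' into, the Poisson-type tail $\P(C_k\ge t)\le (c_k/t)^t$ you want. The intended use is much more direct: since the multigraph is trivalent, each edge lies on at most $2^A$ non-backtracking cycles of length $\le A$, so a single switching alters $X_\ver(A)$ by at most a constant $c(A)$; Theorem~2.19 then gives $\P(|X_\ver(A)-\E[X_\ver(A)]|\ge \tfrac{\varepsilon}{2}\ver)\le \e^{-\delta \ver}$ directly, and since $\E[X_\ver(A)]$ converges (hence is eventually $\le \tfrac{\varepsilon}{2}\ver$), the one-sided bound follows. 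No decomposition into individual cycle counts is needed.

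Your replacement argument --- the ratio bound $\P(C_k=t+1)/\P(C_k=t)\le \lambda_k/(t+1)$ via a forward/backward switching count --- is of a standard flavour but is not a one-paragraph matter, and as stated it has gaps. Destroying one $k$-cycle by re-pairing its half-edges can simultaneously destroy other $k$-cycles sharing edges with it, or create new ones elsewhere, so the switching does not map $\{C_k=t+1\}$ into $\{C_k=t\}$; one must instead compare classes up to controlled error terms, and verify that the counting remains valid uniformly for $t$ as large as $\eta\ver$ (where the $tk$ pairs involved occupy a positive fraction of all $3\ver$ half-edges, so the usual $1+o(1)$ asymptotics for the backward count no longer come for free). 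None of this is fatal --- such tails are provable, e.g.\ via non-asymptotic bounds on $\E\bigl[\binom{C_k}{t}\bigr]$ --- but as written your proof rests on an inequality that is neither quoted correctly from the literature nor established in the text, whereas the Lipschitz-plus-concentration route makes the whole lemma a two-line consequence of the correctly read Theorem~2.19.
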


\begin{proof}
Recall that the graph is trivalent so the number of cycles of length $\leq A$ passing through a given edge is bounded by $2^A$. Then the variations of $ X_{ \ver}(A)$ are bounded by some constant depending on $A$ if we switch two edges. By~\cite[Theorem~2.19]{Wor99} this implies the tail bounds of the claim for the deviations $|X_{ \ver}(A) - \E[X_{ \ver}(A)]|$. On the other hand, the expectation of $X_{ \ver}(A)$ is converging (see~\cite[Eq.~(8) in Section~2.3]{Wor99}), so it is bounded; the claim then follows.
\end{proof}

\subsubsection{Proof of the local convergence and the ratios}

Let us start by proving that the local limit of $ \Triv_{\defect}(1, \gen)$ is the infinite three-regular tree $ \mathbb{A}_3$. 

\begin{proof}[Proof of~\eqref{eq:convthree-regular}]
Recalling that $\ver=4\gen-2$, as well as the exact formula~\eqref{eq:triangulations_high_gen}, the probability that the random graph $ \mathbf{P}_{\ver}$ is connected and unicellular is
\[{3^{\ver-1} (\ver-1)! \#\mathcal{T}_{0}(1,\gen)} \cdot  \left( { \frac{(3\ver)!}{(3\ver/2)! 2^{3\ver/2}}} \right)^{-1}
\equi[\gen] \frac{2}{3\ver}.\]
Consequently the large deviation estimates from Lemma~\ref{lem:grandes_dev_cycles_config} for the number of edges belonging to small cycles also hold for  $ \Triv_{0}(1, \gen)$ for $\ver = 4 \gen-2$ up to changing $\delta >0$. 
Now fix $A>0$ and observe that any vertex at distance more than $ A$ from a cycle of length $ \leq A$ has the same $A$-neighborhood as the origin in the three-regular tree. Since we are dealing with trivalent graphs, there are at most $ 2^{A}C$ vertices within distance $A$ of a given subset of $C$ vertices. In particular, we deduce the following concentration inequality in $  \Triv_{ 0}(1, \gen)$. Let $ N_{A}(G)$ be the number of vertices in the graph $G$ whose $A$-neighborhood is tree-like, then for any $\varepsilon>0$, there exists $\delta >0$ such that for all $\gen$ large enough
\[\P( |N_{A}(\Triv_{ 0}(1, \gen)) - 4\gen| \geq \varepsilon \gen) \leq \e^{-\delta \gen}.\]
We aim for the same bound for $\Triv_{\defect}(1, \gen)$ when $ \defect = o(\gen)$ and argue as in the last section. Recall $ \Triv_{ \defect}(1, \gen)$ is obtained by contracting $ \defect$ edges from a (non-uniform) random map in $ \mathcal{T}_0(1, \gen)$ whose law has a  Radon--Nikodym derivative bounded above by $ \mathrm{C}^{ \defect}$ with respect to the uniform law. Since $  \defect = o( \gen)$ and since there are fewer than $2^{A}$ edges within distance $A$ of the contracted edges, the above display still holds for $  \Triv_{ \defect}(1, \gen)$, with possibly a smaller $\delta$, which does not depend on $\defect$, namely: for any $\varepsilon>0$, there exists $\delta >0$ such that for all $\gen$ large enough, for $\defect/\gen$ small enough,
\begin{equation}\label{eq:grandes_dev_high_gen}
\P( |N_{A}(\Triv_{\defect}(1, \gen)) - 4\gen| \geq \varepsilon \gen) \leq \e^{-\delta \gen}.
\end{equation}
By invariance by re-rooting, the local limit~\eqref{eq:convthree-regular} follows.
\end{proof}

We may finally prove Lemma~\ref{lem:ratio_defauts_genus}.

\begin{proof}[Proof of Lemma~\ref{lem:ratio_defauts_genus}]
Taking $A = 1$ in~\eqref{eq:grandes_dev_high_gen} shows a large deviation principle for the number of non loop edges in $\Triv_{\defect}(1, \gen)$, whose proportion concentrates around $1-\lambda_\circ = 1$.
The proof of Lemma~\ref{lem:ratio_defauts_genus} is now mutatis mutandis the same as in the planar case, replacing~\eqref{eq:grande_dev_loops} by this estimate.  
\end{proof}

\section{On the size of the core and kernel}
\label{sec:preuve_core_kernel}

With the enumerations lemmas in place, we can proceed to the proofs of Theorem~\ref{thm:defauts_ker} and~\ref{thm:aretes_core}.

\subsection{Number of defects of the kernel}
\label{ssec:defauts_kernel}

Let us begin with the proof of Theorem~\ref{thm:defauts_ker}; recall the notation from the statement. 

\begin{proof}[Proof of Theorem~\ref{thm:defauts_ker}]
Let $\spar_n = \face_n+2\gen_n$.
Recall from Proposition~\ref{prop:loi_core_kernel} that,
\[\P\left(\Ker(\Map_n(\face_n,\gen_n)) \textrm{ has defect } \defect\right)
=  \frac{\# \mathcal{T}_\defect(\face_n,\gen_n) \cdot \Phi_{n}( 3\spar_n-\defect-6 )}{\# \mathfrak{M}_n(\face_n,\gen_n)}
\]
for any $\defect \in\{0, \dots, 2\spar_n-5\}$, whereas it equals $0$ otherwise since in this case $\mathcal{T}_\defect(\face_n,\gen_n)$ is empty.
The idea is to consider the ratios of the numerator evaluated at $\defect+1$ and at $\defect$ to find the optimal value $\defect$ which maximise this quantity, and control the deviations for other values of $\defect$.
Since $ 1 \ll \spar_{n} \ll n$, then, using Lemmas~\ref{lem:phi_et_Phi},~\ref{lem:ratio_defauts}, and~\ref{lem:ratio_defauts_genus} at the second line, uniformly for $ \defect \ll \spar_{n}$,
\begin{align}
\frac{\P\left(\Ker(\Mapb_n^{\spar_n}) \textrm{ has defect } \defect+1\right)}{\P\left(\Ker(\Mapb_n^{\spar_n}) \textrm{ has defect } \defect\right)}
&\quad=\enskip \frac{\Phi_{n}( 3\spar_n-(\defect+1)-6 )}{\Phi_{n}( 3\spar_n-\defect-6 )} \times \frac{ \# \mathcal{T}_{ \defect +1}(\face_n,\gen_n)}{{ \# \mathcal{T}_{ \defect}(\face_n,\gen_n)}}
\notag\\ 
&\equi  \sqrt{\frac{2(3\spar_n - (\defect+1) - 6)}{n}}  \times \frac{3 \spar_n(1-\lambda_{\circ})}{2 (\defect+1)}
\notag\\
&\equi \sqrt{\frac{6\spar_n}{n}} \frac{3 \spar_n(1-\lambda_{\circ})}{2 (\defect+1)}
= \frac{D_n}{\defect+1}, \label{eq:ratio_estimee}
\end{align}
where we recall that $D_{n} = 3 (1-\lambda_{\circ}) \sqrt{\frac{3}{2} \frac{\spar_n^3}{n}}$. It is therefore natural to expect that the defect number concentrates around $D_{n}$ and we now make this precise in the two regimes.
\medskip

We now turn to the first statement, when $n^{-1/3} \spar_n \to a \in [0,\infty)$. Note that in this implies that $D_n \to 3 (1-\lambda_{\circ}) \sqrt{3 a^3 / 2}$; let us write this limit as $c \ge 0$.
By induction~\eqref{eq:ratio_estimee} implies that for any $\defect\ge0$, it holds
\[\frac{\P(\Ker( \Mapb_n^{\spar_n}) \textrm{ has defect } \defect)}{\P(\Ker( \Mapb_n^{\spar_n}) \textrm{ has defect } 0)}
\cv \frac{c^\defect}{\defect!}
.\]
In order to conclude, it remains to show that 
\begin{equation}\label{eq:goalpoisson}
\frac{\P(\Ker( \Mapb_n^{\spar_n}) \textrm{ has defect}  \geq D)}{\P(\Ker( \Mapb_n^{\spar_n}) \textrm{ has defect } 0)}
\end{equation}
can be made arbitrarily small (uniformly in $n$ and $ \mathrm{s}_{n}$) provided that $D$ is large enough. To see that, note that from~\eqref{eq:crude1} we have $\# \mathcal{T}_{\defect}(\face,\gen) \le \frac{(3\spar/2)^\defect}{\defect!} \# \mathcal{T}_{0}(\face,\gen)$.
On the other hand by Lemma~\ref{lem:phi_et_Phi} there exists a constant $K>0$ such that for every $n$ large enough (so e.g. $3\spar_n \le n/32$),
\[\sup_{\defect \le 3\spar_n-6} \sqrt{\frac{n}{\spar_n}} \cdot \frac{\Phi_{n}( 3\spar_n-(\defect+1)-6 )}{\Phi_{n}( 3\spar_n-\defect-6 )}
\le K/2.\]
Therefore for every $n$ large enough, for every $\defect \le 3\spar_n-6$ it holds
\[\frac{\Phi_{n}( 3\spar_n-\defect-6 )}{\Phi_{n}( 3\spar_n-6 )} \le \left(K \sqrt{\frac{\spar_n}{n}}\right)^\defect.\]
Combining the two bounds yields for every $n$ large enough, for every $\defect \le 3\spar_n-6$,
\[\frac{\P(\Ker( \Mapb_n^{\spar_n}) \textrm{ has defect } \defect)}{\P(\Ker( \Mapb_n^{\spar_n}) \textrm{ has defect } 0)}
= \frac{\#\mathcal{T}_\defect(\face_n,\gen_n) \cdot \Phi_{n}(3\spar_n-\defect-6)}{\# \mathcal{T}_0(\face_n,\gen_n)  \cdot \Phi_{n}(3\spar_n-6 )}
\le \frac{1}{\defect!} \left(\frac{3K}{2} \sqrt{\frac{\spar_n^3}{n}}\right)^\defect 
.\]
Recall that we assume that $\spar_n^3/n$ has a finite limit, so this sequence is uniformly bounded. We easily deduce that~\eqref{eq:goalpoisson} can be made arbitrary small provided that $D$ is large enough and this concludes the convergence to a Poisson distribution.
\medskip

Let us now turn to the second regime, where $n^{-1/3} \spar_n \to \infty$ (but still $n^{-1} \spar_n \to 0$) and let us replace for convenience $D_n$ by its integer part; note that $D_n = o(\spar_n)$. 
Recall the asymptotic behaviour~\eqref{eq:ratio_estimee} valid uniformly for $\defect = o(\spar_n)$. Fix any $\varepsilon > 0$ and let $n$ be large enough so, for any $k \in [2\varepsilon D_n, \varepsilon^{-1} D_n]$,
\[\frac{\P(\Ker(\Mapb_n^{\spar_n}) \textrm{ has defect } D_n+k)}{\P(\Ker(\Mapb_n^{\spar_n}) \textrm{ has defect } D_n)}
\le \prod_{j=1}^k (1+\varepsilon) \frac{D_n}{D_n+j}
\le \left(\frac{1+\varepsilon}{1+2\varepsilon}\right)^k
.\]
In addition, by~\eqref{eq:crude1} and Lemma~\ref{lem:phi_et_Phi} there exists a constant $K>0$ such that for every $n$ large enough (so e.g. $3\spar_n \le n/32$) and any $\defect$,
\[\frac{\P(\Ker(\Mapb_n^{\spar_n}) \textrm{ has defect } \defect+1)}{\P(\Ker(\Mapb_n^{\spar_n}) \textrm{ has defect } \defect)}
\le K \frac{D_n}{\defect+1} 
.\]
Thus, similarly, for $k \ge \varepsilon^{-1} D_n$,
\[\frac{\P(\Ker(\Mapb_n^{\spar_n}) \textrm{ has defect } D_n+k)}{\P(\Ker(\Mapb_n^{\spar_n}) \textrm{ has defect } D_n)}
\le \left(\frac{K}{1+\varepsilon^{-1}}\right)^k
.\]
Taking $\varepsilon>0$ small enough, we infer that there exists $\delta > 0$ such that for every $n$ large enough,
\[\frac{\P(\Ker(\Mapb_n^{\spar_n}) \textrm{ has defect} \ge (1+2\varepsilon) D_n)}{\P(\Ker(\Mapb_n^{\spar_n}) \textrm{ has defect } D_n)}
\le \sum_{k \ge 2\varepsilon D_n} (1-\delta)^k = \delta^{-1} (1-\delta)^{2\varepsilon D_n}
.\]
The negative deviations are treated similarly and are left to the reader.
\end{proof}

\subsection{Asymptotic enumeration}
\label{ssec:asymp_enum}
In return the probabilistic estimates in the preceding proof can be turned into asymptotic estimates on the number trivalent maps with defects. In the regime $\spar_{n} = O(n^{1/3})$ we obtain actual asymptotics.

\begin{proof}[Proof of Corollary~\ref{cor:enumeration}]
Suppose that $n^{-1/3} \spar_n \to a \ge 0$ and that either $(\face_n, \gen_n) = (\spar_n, 0)$ or $(\face_n, \gen_n) = (1, (\spar_n-1)/2)$. Then the convergence of  $ \mathbb{P}( \mathrm{Ker}( \Mapb_{n}^{\spar_{n}}) \mbox{ has zero defect})$ towards the probability that a Poisson law is equal to $0$ can be written by Proposition~\ref{prop:loi_core_kernel} as
\[\frac{\# \mathcal{T}_0(\face_n,\gen_n) \cdot \Phi_{n}( 3\spar_n-6 )}{\# \mathfrak{M}_n(\face_n,\gen_n)}
\cv \exp\left(- (1-\lambda_{\circ}) \sqrt{(3a)^3 / 2}\right)
,\]
where we recall that $\lambda_\circ$ equals $1 - \sqrt{3}/2$ when $\gen_n=0$ and $0$ when $\face_n=1$.
The asymptotic behaviour of $\# \mathcal{T}_0(\face_n,\gen_n)$ when $\gen_n=0$ has been recalled in~\eqref{eq:triangulations}, whereas when $\face_n=1$ it is given by~\eqref{eq:triangulations_high_gen}. Also the behaviour of $\Phi_{n}( 3\spar_n-6 )$ follows from Corollary~\ref{cor:equivPhikn}, namely, with $\gamma = (3a)^3$,
\begin{align*}
\Phi_{n}(3 \spar_{n}-6)
& \equi \frac{\e^{- \sqrt{\gamma/2}}}{2\sqrt{2 \pi (3\spar_{n}-6)}} \cdot 4^{n}  \cdot \left(  \frac{\e n}{2 (3\spar_{n}-6)}\right)^{ \frac{3\spar_{n}-6}{2}}
\\
& \equi \frac{6^3 \e^{- \sqrt{\gamma/2}}}{2\sqrt{6 \pi}} \cdot 4^{n}  \cdot \left(  \frac{\e n}{6 \spar_{n}}\right)^{3\spar_{n}/2}  \cdot \spar_{n}^{5/2} \cdot n^{-3}
.\end{align*}

Combined with~\eqref{eq:triangulations} we derive the asymptotic formula for the number of plane maps with $n$ edges, $\face_n$ faces, and a trivalent kernel: when both $\gen_n=0$ and $n^{-1/3} \face_n \to f$, then $n^{-1/3} s_{n} \rightarrow f$, so taking $\gamma=(3f)^{3}$ we get
\[\#\mathcal{T}_0(\face_n,0) \cdot \Phi_{n}(3 \face_{n}-6)
\equi  \frac{\e^{- \sqrt{\gamma/2}}}{4 \pi} \cdot n^{-3} \cdot 4^{n}  \cdot \left( 2^{1/3} \frac{\e n}{\face_{n}}\right)^{3\face_{n}/2} 
.\]

Similarly, using~\eqref{eq:triangulations_high_gen} instead, we obtain the asymptotic formula for the number of unicellular maps with $n$ edges, genus $\gen_n$ faces, and a trivalent kernel: when both $\face_n=1$ and $n^{-1/3} g_{n} \rightarrow g$, then $n^{-1/3} s_{n} \rightarrow 2g$, so taking $\gamma=(6g)^{3}$ we get
\[\#\mathcal{T}_0(1,\gen_n) \cdot \Phi_{n}(6 \gen_{n}-6)
\equi \frac{ \e^{- \sqrt{\gamma/2}}}{2\pi } \cdot \gen_n^{-1/2} \cdot n^{-3/2} \cdot 4^{n}  \cdot \left(  \frac{\e n^3}{12 \gen_n}\right)^{\gen_n}
.\]
Our claim then follows from the convergence in the beginning of this proof.
\end{proof}

\subsection{Volume of the core}
\label{ssec:vol_core}

Theorem~\ref{thm:aretes_core} now easily follows from our previous results.

\begin{proof}[Proof of Theorem~\ref{thm:aretes_core}]
Since $\#\mathrm{Edges} (\Ker( \Mapb_n^{\spar_{n}}))=3s_{n}-\Defect(\Ker(\Mapb_n^{\spar_{n}})) -6$, we infer from Theorem~\ref{thm:defauts_ker} that as $n\to\infty$, the number of edges of $\Ker( \Mapb_n^{\spar_{n}})$ concentrates around $K_{n}$ defined by
\[K_n = 3\spar_n - 3 (1-\lambda_{\circ}) \sqrt{\frac{3}{2} \frac{\spar_n^3}{n}} \equi 3\spar_n.\]
Recall from Proposition~\ref{prop:loi_core_kernel} the conditional law of the number of edges of the core given the kernel; then Corollary~\ref{cor:TCL_core} implies that
\[\frac{\#\mathrm{Edges}(\Core( \Mapb_n^{\spar_{n}}))}{C_n} \cvproba 1,
\qquad\text{where}\qquad
C_n = \frac{K_n + \sqrt{K_n^{2} + 8 n K_n }}{4}
\equi \sqrt{\frac{3 n \spar_n}{2}},
\]
from which our claim follows.
\end{proof}

Let us define $C_n'$ as $C_n$ in the preceding display but using the random number of edges of the kernel instead of the deterministic quantity $K_n$. Then one can check that $4(C_n' - C_n)$ is asymptotically equivalent to $\#\mathrm{Edges}(\Ker(\Mapb_n^{\spar_{n}}))-K_n$. By Theorem~\ref{thm:defauts_ker} this is much smaller than $K_n$, and thus much smaller than $\spar_n$, therefore, when $\spar_n = O(\sqrt{n})$, we can deduce an unconditioned CLT from Corollary~\ref{cor:TCL_core}, namely 
\[\frac{2}{\sqrt{n}} \cdot  \left( \#\mathrm{Edges}(\Core( \Mapb_n^{\spar_{n}})) - C_n \right) \cvloi N(0,1).\]
When $\spar_n \gg \sqrt{n}$ however, one would need a tighter control on the fluctuations of the size of the kernel. 
Although our estimates are not precise enough, we believe that the following extensions hold. 

\begin{conjecture}\label{conjecture_TCL}
Assume that $n^{-1/3} \spar_n \to \infty$ and $n^{-1} \spar_n \to 0$ and let $\lambda_\circ$ as in~\eqref{eq:lambda}. Define
\[D_n = 3 (1-\lambda_{\circ}) \sqrt{\frac{3}{2} \frac{\spar_n^3}{n}},
\qquad
K_n = 3\spar_n - D_n,
\qquad\text{and}\qquad
C_n = \frac{K_n + \sqrt{K_n^{2} + 8 n K_n }}{4}.\]
Then
\[\frac{\Defect(\Ker(\Mapb_n^{\spar_{n}})) - D_n}{\sqrt{D_n}} \cvloi N(0,1),\]
where $N(0,1)$ is the standard Gaussian distribution. Consequently,
\[\frac{2}{\sqrt{n}} \cdot  \left( \#\mathrm{Edges}(\Core( \Mapb_n^{\spar_{n}})) - C_n \right) \cvloi N(0,1).\]
\end{conjecture}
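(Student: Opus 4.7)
My plan is to first establish the CLT for $\Defect(\Ker(\Mapb_n^{\spar_n}))$, and then derive the CLT for $\#\mathrm{Edges}(\Core(\Mapb_n^{\spar_n}))$ as a consequence by conditioning on the kernel and invoking Corollary~\ref{cor:TCL_core}. The heart of the argument is therefore a sharpening of the ratio estimate~\eqref{eq:ratio_estimee} driving the proof of Theorem~\ref{thm:defauts_ker}, with enough precision to reveal Gaussian rather than merely concentrated behaviour.

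For the defect CLT, the central step is to refine~\eqref{eq:ratio_estimee} into
\[\frac{\P(\Defect(\Ker(\Mapb_n^{\spar_n})) = \defect+1)}{\P(\Defect(\Ker(\Mapb_n^{\spar_n})) = \defect)} = \frac{D_n}{\defect+1}\bigl(1 + \eta_n(\defect)\bigr),\]
with an error term satisfying $\sqrt{D_n}\,\sup_{|\defect-D_n|\le \sqrt{D_n}\log D_n} |\eta_n(\defect)| \to 0$. This requires second-order expansions of both factors: the ratio $\Phi_n(k+1)/\Phi_n(k)$ from Lemma~\ref{lem:phi_et_Phi}, obtained by pushing the Stirling analysis underlying~\eqref{eq:locallimitCLT} one further order, and the combinatorial ratio $\#\mathcal{T}_{\defect+1}(\face_n,\gen_n)/\#\mathcal{T}_\defect(\face_n,\gen_n)$ from Lemmas~\ref{lem:ratio_defauts} and~\ref{lem:ratio_defauts_genus}. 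Once the refined estimate is secured, one writes
\[\log\frac{\P(\Defect=D_n+j)}{\P(\Defect=D_n)} = \sum_{i=0}^{j-1} \log\left( \frac{D_n}{D_n+i+1}\bigl(1+\eta_n(D_n+i)\bigr)\right)\]
and expands to get $-j^2/(2D_n) + o(1)$ uniformly for $|j| \le \sqrt{D_n} \log D_n$, producing a Gaussian local limit. The contribution from $|j| > \sqrt{D_n} \log D_n$ is controlled by the coarse bounds from the proof of Theorem~\ref{thm:defauts_ker} (based on~\eqref{eq:crude1} and Lemma~\ref{lem:phi_et_Phi}), which force $\P(\Defect=D_n+j)/\P(\Defect=D_n)$ to decay at least geometrically in $|j|/\sqrt{D_n}$, so the tails are negligible.

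For the core CLT, let $k = \#\mathrm{Edges}(\Ker(\Mapb_n^{\spar_n}))$, so $k - K_n = -(\Defect-D_n)$ is of order $\sqrt{D_n} \sim \spar_n^{3/4} n^{-1/4}$ in probability by the first part. By Proposition~\ref{prop:loi_core_kernel}, conditionally on $k$, the number of edges of the core follows the distribution from Corollary~\ref{cor:TCL_core}, and hence $2(\#\mathrm{Edges}(\Core(\Mapb_n^{\spar_n})) - c_n(k))/\sqrt{n} \cvloi N(0,1)$ with $c_n(k) = (k+\sqrt{k^2+8nk})/4$. A direct computation gives $c_n'(K_n) = O(\sqrt{n/\spar_n})$ and $c_n''(K_n) = O(\sqrt{n/\spar_n^3})$, so Taylor expanding at $K_n$ yields $c_n(k) - C_n = O(n^{1/4} \spar_n^{1/4}) + O(1) = o(\sqrt{n})$ in probability, under the standing assumption $\spar_n = o(n)$. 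Since the local limit estimate~\eqref{eq:locallimitCLT} underlying Corollary~\ref{cor:TCL_core} is uniform in both $k$ and the position in the window, the conditional convergence passes to the unconditional CLT by a standard Slutsky-type argument.

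The main obstacle is the refined ratio estimate for $\#\mathcal{T}_{\defect+1}/\#\mathcal{T}_\defect$. The current proofs of Lemmas~\ref{lem:ratio_defauts} and~\ref{lem:ratio_defauts_genus} rely on the contraction operation combined with \emph{large deviation} estimates for the density of loops in near-trivalent maps, which only capture the leading order. A second-order expansion would require either a sharpened Gaussian fluctuation result for the number of loops in $\Triv_\defect(\face_n,\gen_n)$, or a direct combinatorial analysis of the bias on $\Triv_0$ induced by the contraction map, coupled with a CLT for the degree of the root. Such fluctuation results are plausible via an extension of~\cite{Bud21} in the planar case and a refinement of the switching arguments from~\cite[Theorem~2.19]{Wor99} in the unicellular case, but they are not immediate from the estimates currently established in the paper.
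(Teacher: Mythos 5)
This statement is explicitly a \emph{conjecture} in the paper: the authors write just before it that ``our estimates are not precise enough,'' so there is no proof in the paper to compare against. Your proposal is a reasonable roadmap and correctly locates the obstruction, but it is not a proof; you acknowledge this yourself in the final paragraph, and that acknowledged step is precisely the one that cannot currently be carried out.

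Concretely, the gap is the refined ratio estimate $\P(\Defect=\defect+1)/\P(\Defect=\defect)=\frac{D_n}{\defect+1}(1+\eta_n(\defect))$ with $\sup|\eta_n|=o(D_n^{-1/2})$ on the relevant window. The $\Phi_n(k+1)/\Phi_n(k)$ factor is explicit and a second-order Stirling expansion is routine, but the factor $\#\mathcal{T}_{\defect+1}(\face_n,\gen_n)/\#\mathcal{T}_{\defect}(\face_n,\gen_n)$ is controlled in Lemmas~\ref{lem:ratio_defauts} and~\ref{lem:ratio_defauts_genus} only to first order: the inputs are the large-deviation bounds~\eqref{eq:grande_dev_loops} and Lemma~\ref{lem:grandes_dev_cycles_config}, which concentrate the number of non-loop (contractible) edges only up to an error $\varepsilon\spar_n$, and the Radon--Nikodym bounds $C^{\pm\defect}$ in~\eqref{eq:RNexp} used to transfer these from $\Triv_0$ to $\Triv_\defect$ are exponentially lossy in $\defect$. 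These yield $\eta_n=o(1)$ but nothing close to $o(D_n^{-1/2})=o(\spar_n^{-3/4}n^{1/4})$; one would need a genuine fluctuation (CLT-type) result for the loop count in $\Triv_\defect(\face_n,\gen_n)$, uniformly in $\defect$ near $D_n$, which is not available from~\cite{Bud21} or~\cite{Wor99} as cited. Without that, the Gaussian local limit for the defect does not follow, and hence neither does the second display (your reduction of the core CLT to the defect CLT is otherwise sound, and indeed explains why the paper can only claim the unconditioned core CLT for $\spar_n=O(\sqrt n)$: the conjectured defect CLT supplies exactly the tighter kernel-size fluctuation bound $O(\sqrt{D_n})$ that makes $c_n(k)-C_n=o(\sqrt n)$ for all $\spar_n=o(n)$). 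As it stands, your proposal restates the conjecture together with a plausible strategy rather than proving it.
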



\section{Scaling limits for the attached trees}
\label{sec:contour}

In this section we establish the scaling limits for the trees attached to the core of the random map $\Map_n(\face_n, \gen_n)$. 
We show that this forest converges after normalisation by the factor $ \sqrt{n/\spar_n}$ towards a forest coded in the usual way by a Brownian motion with negative drift for which we review the excursion theory. The results are used in the next section when proving Theorem~\ref{thm:intermediate}.

\subsection{Excursions theory for linear Brownian motion with drift}
\label{sec:excursionBMD}
 
 Let $ \mathcal{B}$ denote a standard Brownian motion started from $0$. Fix $\kappa >0$ (in our application below we  shall take $\kappa = \sqrt{3/2}$)  and let us consider the Brownian motion with linear drift $-\kappa$ and its running infimum process defined respectively for any $t \ge 0$ by 
\[\mathcal{B}^{\kappa}_{t} = \mathcal{B}_{t}-\kappa t
\qquad\text{and}\qquad
\underline{\mathcal{B}}^{\kappa}_{t} = \inf_{0 \leq s\leq t} \mathcal{B}^{\kappa}_{s}
.\]
Let us now recall the excursion theory of $ \mathcal{B}^{\kappa}$ above its minimum. 
First, when $\kappa=0$, it is well know that the excursions of a Brownian motion form a Poisson process with local time given by $\underline{\mathcal{B}}^{0}$ and with intensity given by the It\={o} excursion measure $\mathbf{n}$, see e.g.~\cite{LG10} or~\cite[Chapter XII]{RY99}. With our normalisation, the measure $ \mathbf{n}$ can be  disintegrated as
\[\mathbf{n}(\cdot) = \int_0^\infty \mathbf{n}_a(\cdot) \frac{\d a}{\sqrt{2\pi a^3}},\]
where $\mathbf{n}_a$ denotes the law of the Brownian excursion with duration $a$. 

When $\kappa>0$, by Girsanov's formula, the process $\mathcal{B}^\kappa$ is absolutely continuous with respect to $\mathcal{B}$, with density given by the exponential martingale $(\exp(-\kappa \mathcal{B}_t - \kappa^2t/2) ; t \ge 0)$. Using this and the exponential formula for Poisson random measures, we deduce that the excursions of $ \mathcal{B}^{\kappa}$ above its running infimum $\underline{\mathcal{B}}^{\kappa}$ (still using $\underline{\mathcal{B}}^{\kappa}$ as local time) are again distributed as a Poisson process with excursion intensity given by 
\[\mathbf{n}^{\kappa}(\cdot) = \int_0^\infty \mathbf{n}_a(\cdot) \frac{\exp(-\kappa^2 a/2)}{\sqrt{2\pi a^3}} \d a.
\]
Finally, let us describe the so-called Bismut decomposition for the excursion measure $ \mathbf{n}^{\kappa}$. For this, we introduce  the size-biased excursion measure $\overline{ \mathbf{n}}^{\kappa}$ on excursions $ \mathbf{e}$ of duration $\zeta( \mathbf{e})$ together with a distinguished time $u \in [0, \zeta( \mathbf{e})]$ by
\begin{equation}\label{eq:excursion_biaisee}
\overline{ \mathbf{n}}^{\kappa}( \d  \mathbf{e}, \d u) = \int_0^\infty  \d a \frac{\exp(-\kappa^2 a/2)}{\sqrt{2\pi a^3}} \mathbf{n}_a( \d  \mathbf{e}) 1_{[0,a]}(u) \d u.
\end{equation}
Contrary to $  \mathbf{n}^{\kappa}$, the above measure has finite total mass, so it can be used to define a probability distribution after normalisation; precisely,
\[\int \overline{\mathbf{n}}^{\kappa}( \d  \mathbf{e}, \d u) = \int_0^\infty  \d a \frac{\exp(-\kappa^2 a/2)}{\sqrt{2\pi a}} = \frac{1}{\kappa}.\]
For an excursion $ \mathbf{e}=(\mathbf{e}_t ; 0 \le t \le \zeta( \mathbf{e}))$ and a time $u \in (0, \zeta( \mathbf{e}))$, let $ \mathbf{e}^{u,-} = (\mathbf{e}_{u-t} ; 0\le t \le u)$ and $ \mathbf{e}^{u,+} = (\mathbf{e}_{u+t} ; 0\le t \le \zeta-u)$; let also $\mathcal{B}^-$ and $\mathcal{B}^+$ be two independent Brownian motions both started from $x>0$ under $\P_x$ and both stopped when hitting $0$, at time $T^-$ and $T^+$ respectively. Let $F$ be a continuous and bounded function, the so-called Bismut decomposition~\cite[Theorem~4.7, Chap~XII, p502]{RY99} reads
\[\int \overline{\mathbf{n}}^{0}( \d  \mathbf{e}, \d s) F\big( \mathbf{e}^{s,-},  \mathbf{e}^{s,+}\big)
= 2 \int_0^\infty \E_x\left[F\big(\mathcal{B}^-, \mathcal{B}^+\big)\right] \d x.\]
In the case $\kappa >0$ we can write with obvious notation
\begin{align*}
\int \kappa \overline{\mathbf{n}}^{\kappa}( \d  \mathbf{e}, \d u) F\big( \mathbf{e}^{u,-},  \mathbf{u}^{s,+}\big)
&= \int \kappa \overline{\mathbf{n}}^{0} ( \d  \mathbf{e}, \d u) \e^{-\kappa^2\zeta( \mathbf{e})/2} F \big( \mathbf{e}^{u,-},  \mathbf{e}^{u,+}\big)  \d  u
\\
&= \int_0^\infty 2\kappa \cdot \E_x\left[\e^{-\kappa^2 (T^-+T^+)/2} F\big(\mathcal{B}^-, \mathcal{B}^+\big)\right] \d x
\\
&= \int_0^\infty 2\kappa \e^{-2 \kappa x} \, \E_x\left[F\big(\mathcal{B}^{\kappa,-}, \mathcal{B}^{\kappa,+}\big)\right] \d x
.\end{align*}
In words, if one first samples an excursion $ \mathbf{e}$ and a time $u$ from the law $\kappa \overline{\mathbf{n}}^{\kappa}$, then the random variable $\mathbf{e}_u$ follows an exponential law with rate $2 \kappa$ and conditionally given this value, the time-reversed past and the future of the excursion are two independent Brownian motions with drift $-\kappa$ started from $\mathbf{e}_u$ and killed upon hitting $0$, see Figure~\ref{fig:bismut} for a pictorial representation.

\begin{figure}[!ht]\centering
\includegraphics[width=10cm]{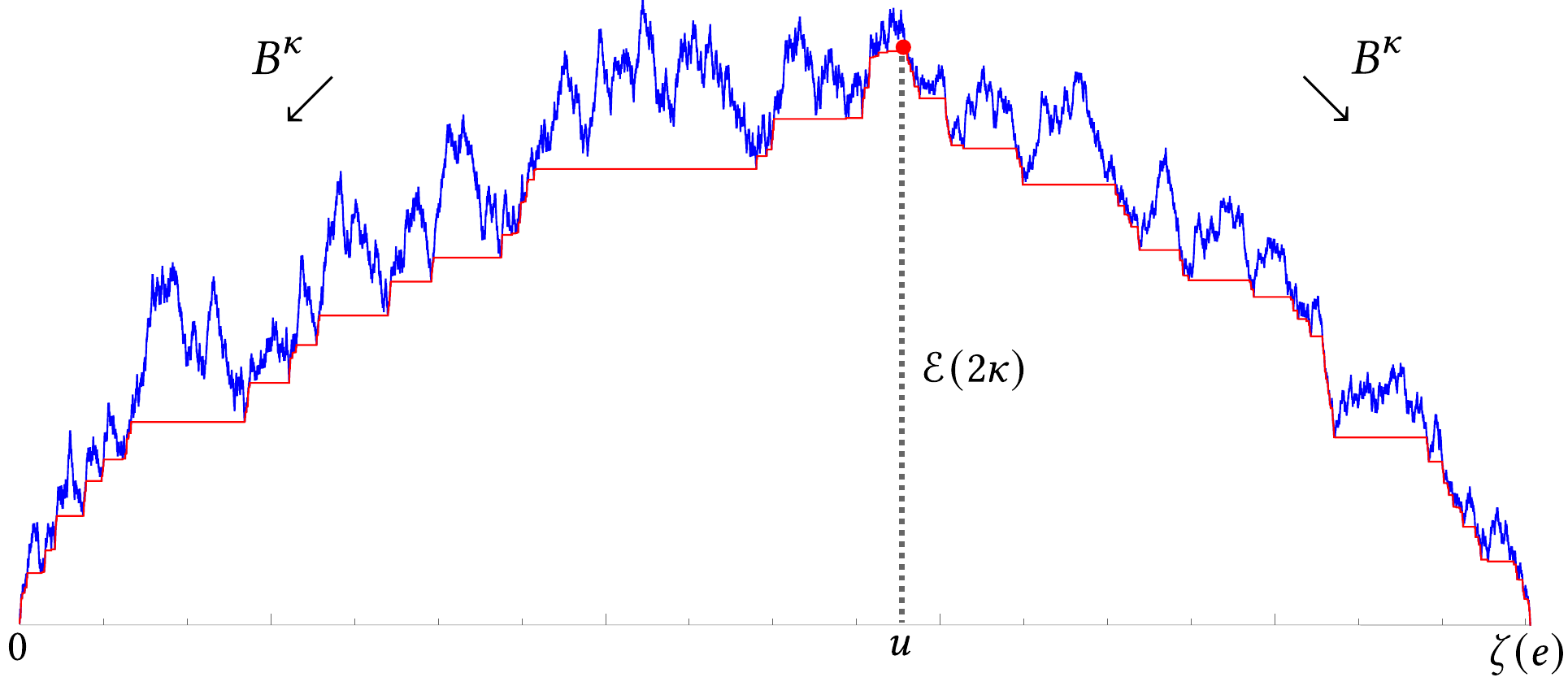}
\caption{The Bismut decomposition of $ \overline{  \mathbf{n}}^{\kappa}$.}
\label{fig:bismut}
\end{figure}

Let us now make a connection with another appearance of the exponential law with rate $2\kappa$. Indeed, recall that $(\exp(2\kappa \mathcal{B}_{t}^\kappa) ; t \ge 0)$ is a martingale, so a classical application of the optional stopping theorem shows that $\sup_{t \geq 0} \mathcal{B}^{\kappa}_{t}$ follows this very exponential law.
The link can be done via Bismut's decomposition. Indeed, let us extend $\mathcal{B}^\kappa$ to the negative half-line letting $(-\mathcal{B}^\kappa_{-t})_{t \ge 0}$ be an independent copy of $\mathcal{B}^\kappa$. Then $-\inf_{t \le 0} \mathcal{B}^\kappa$ follows this exponential law. Let $J^- < 0$ denote the (a.s.~unique) time such that $\mathcal{B}^\kappa_{J^-} = \inf_{t \le 0} \mathcal{B}^\kappa$ and let $J^+ = \inf\{t>0 : \mathcal{B}^\kappa_t = \inf_{t \le 0} \mathcal{B}^\kappa\}$. It follows from Bismut's decomposition that the pair $((\mathcal{B}^\kappa_t)_{t \in [J^-,J^+]}, -J^-)$ has the law $\kappa \overline{\mathbf{n}}^\kappa$. 

Also, notice that conditional on $\mathcal{B}^\kappa_{J^+}$, the path after time $J^+$ is then an independent copy of $\mathcal{B}^\kappa$ starting from this value, so its excursions are described by the infinite measure $\mathbf{n}^\kappa$.
Finally, we define the path $\mathcal{W}^\kappa = (\mathcal{W}^\kappa_t)_{t\ge 0}$ by
\begin{equation}\label{eq:excursion_brownien_drift}
\mathcal{W}^\kappa_t = \mathcal{B}^\kappa_{t+J^-} - \mathcal{B}^\kappa_{J^-},
\end{equation}
which therefore starts with a size-biased excursion before evolving like $\mathcal{B}^\kappa$. 

Let us mention~\cite[Section~7.7.7]{Pit06} and~\cite{Jan05} for related discussions with similar objects.

\subsection{Scaling limit of the random forest}

We now aim at showing that $\mathcal{W}^\kappa$ just defined in~\eqref{eq:excursion_brownien_drift}  is the scaling limit of the contour of the random forest attached to the core in order to recover the random map $\Map_n(\face_n, \gen_n)$.
Let us argue conditionally given the core and its number of edges, say $c_n$, which, in the framework of Theorem~\ref{thm:aretes_core}, is typically of order $\sqrt{n \spar_n}$, which is both much larger than $\sqrt{n}$ and much smaller than $n$.

Recall from Section~\ref{sec:decomposition} and especially Figure~\ref{fig:contour} that we actually consider a forest with a mark which is either an oriented edge in the first tree, or an extra symbol to mean that we keep the root edge of the core. We let $W^n$ denote the contour of this forest, $A^n$ its first hitting time of $-1$, which codes the size of the first tree, and let $R^n$ have the uniform distribution on $\{0, \dots, A^n-1\}$, which codes for the position of the root edge of the map. Then the pair $(W^n, R^n)$ has the uniform distribution on the first-passage paths with $\pm1$ increments, which end at time $2n$ by hitting $-2c_n$ for the first time, together with an instant smaller than their first hitting time of $-1$. 
The main result of this section is the following.

\begin{proposition}\label{prop:cv_contour_brownien_drift}
If $\sqrt{n} \ll c_n \ll n$, then for any $\kappa >0$, the convergence in distribution
\[\left(\frac{c_{n}}{\kappa n} W^{n}_{(\kappa n/c_{n})^2 t}\right)_{t \ge 0}
\cvloi \left(\mathcal{W}^{\kappa}_{t}\right)_{t \ge 0},
\]
holds in the uniform topology on compact intervals, where $\mathcal{W}^\kappa$ is defined in~\eqref{eq:excursion_brownien_drift}.
\end{proposition}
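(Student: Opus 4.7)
The strategy is to realize $W^n$ from a uniform unrestricted bridge via the cycle lemma (Vervaat-type rerouting), and then to pass to the limit using the Bismut-type description of the bridge near its minimum. Let $X^n$ be a uniform $\pm 1$ bridge from $0$ to $-2c_n$ of length $2n$ and let $\tau^n$ be its first time of minimum; by the cycle lemma, $W^n$ has the law of the rotated path
\[
s \mapsto X^n_{(\tau^n + s) \bmod 2n} - X^n_{\tau^n} - 2c_n\,\mathbf{1}_{\{s \geq 2n - \tau^n\}}, \qquad s \in [0, 2n].
\]

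After the rescaling of the proposition, $\tilde X^n$ is a bridge of rescaled duration $L_n := 2 c_n^2/(\kappa^2 n) \to \infty$ with drift $-\kappa$ and terminal value $-\kappa L_n$. Donsker's invariance principle for bridges implies that on any compact window strictly inside $[0, L_n]$, $\tilde X^n$ converges to a Brownian motion with drift $-\kappa$ (the bridge conditioning is felt only near the endpoints). To locate the argmin, note that the time-reversed centered bridge $\bar X^n_t := \tilde X^n(L_n - t) + \kappa L_n$ is itself a bridge from $0$ to $\kappa L_n$ with drift $+\kappa$, so as $L_n \to \infty$ its first time of minimum $L_n - \tilde\tau^n$ converges in distribution to the (a.s.~unique) minimum location $\Theta$ of the unconditioned $\mathcal B^{+\kappa}$, which satisfies $\Theta \stackrel{(d)}{=} -J^-$; jointly, the rescaled minimum value $\tilde X^n(\tilde\tau^n) + \kappa L_n$ converges to $-M$ with $M \sim \operatorname{Exp}(2\kappa)$, and the rescaled bridge in a neighbourhood of its argmin converges to the Bismut excursion of $\mathcal B^\kappa$ straddling $J^-$.

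Translated through the rotation, $\tilde W^n$ first follows $\tilde X^n$ on $[\tilde\tau^n, L_n]$, of rescaled length converging to $-J^-$, which matches the right half $(\mathcal B^\kappa_{t + J^-} - \mathcal B^\kappa_{J^-})_{t \in [0, -J^-]}$ of the Bismut excursion; the subsequent portion starts at height $M$ and, by the strong Markov property in the limit, evolves as an independent Brownian motion with drift $-\kappa$ which completes the Bismut excursion upon hitting $0$ and then runs as a free Brownian motion with drift $-\kappa$. This is precisely $\mathcal W^\kappa$ as defined in~\eqref{eq:excursion_brownien_drift}, giving the claimed convergence on any compact interval. The main obstacle is to rigorously justify the Bismut-type decomposition in the discrete bridge setting, namely, the joint convergence of the argmin location, the argmin value, and the excursion of the bridge around its argmin to their continuous counterparts; I would handle this via an exponential tilt from the symmetric walk to one with drift $-c_n/n$ (which becomes $-\kappa$ after rescaling) together with local limit theorems for drifted walks, in analogy with the continuous Bismut decomposition recalled in Section~\ref{sec:excursionBMD}.
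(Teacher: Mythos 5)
Your proposal is correct and follows essentially the same route as the paper: realise $W^n$ as the Vervaat rotation of the uniform bridge at its first argmin, obtain the convergence of the bridge near its two endpoints to independent Brownian motions with drift $-\kappa$ via an exponential tilt to the walk with drift $-c_n/n$ combined with local limit (Stirling) estimates, and conclude by continuity of the rotation at the a.s.\ unique argmin. The tilting-plus-LLT step you flag as the main obstacle is precisely the content of the paper's Lemma~\ref{lem:cv_recentre_brownien}.
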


Let $B^n$ denote a uniform random $\pm1$ path starting from $0$ at time $0$ and ending at $-2c_n$ at time $2n$ and recall that $W^n$ is obtained from $B^n$ by a cyclic shift at the first time the latter reaches its overall minimum.
The path $B^n$ is quite simple and well-known and when $c_n=o(n)$ a global convergence of $B^n$ to the Brownian bridge has been established, see e.g.~\cite[Theorem~20.7]{Ald85}.
However here, we are interested in the behaviour of this path viewed in a smaller time scale, around both the starting point and the endpoint. To this end, let us set $\widehat{B}^{n}_{i} = -2c_{n}-B^{n}_{2n-i}$ for every $i \in \{0, \dots, 2n\}$. Note that $\widehat{B}^{n}$ has the same law as $B^{n}$.
On a time-scale which is small compared to $n$, the recentered paths do not feel the bridge conditioning and the fluctuations simply converge to Brownian motions.

\begin{lemma}\label{lem:cv_recentre_brownien}
Suppose that $c_n = o(n)$. Let $N_{n}\to \infty$ be such that $N_{n} = o(n)$. Let $ \mathcal{B}$ and $\widehat{ \mathcal{B}}$ be two independent Brownian motions. 
The convergence in distribution
\[\left(\frac{1}{\sqrt{N_{n}}} \left(B^{n}_{N_{n}t}+\frac{c_{n} N_{n}}{n}t, \widehat{B}^{n}_{N_{n}t}+\frac{c_{n} N_{n}}{n}t\right)\right)_{t \ge 0}
\cvloi \left( \mathcal{B}_{t}, \widehat{ \mathcal{B}}_{t}\right)_{t \ge 0},\]
holds in the uniform topology on compact intervals.
\end{lemma}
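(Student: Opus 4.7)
The plan is to prove this convergence via a Girsanov-type absolute continuity argument combined with asymptotic decorrelation of the two endpoints of the bridge. View $B^n$ as the trajectory of a simple symmetric random walk $(S_0,\ldots,S_{2n})$ conditioned on $\{S_{2n} = -2c_n\}$; equivalently, as a uniformly random arrangement of $n-c_n$ up-steps and $n+c_n$ down-steps.

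First, I would handle the marginal convergence of $(B^n_{N_n t} + c_n N_n t/n)/\sqrt{N_n}$ to $\mathcal{B}_t$ by writing, for any bounded measurable $F$ depending only on $(B^n_0,\ldots,B^n_{N_n})$,
\[ \E\!\left[F(B^n_0,\ldots,B^n_{N_n})\right] = \E\!\left[F(S_0,\ldots,S_{N_n})\, \frac{\P(S'_{2n-N_n} = -2c_n - S_{N_n})}{\P(S_{2n} = -2c_n)}\right], \]
where $S'$ is an independent copy of $S$. Under the assumptions $N_n = o(n)$ and $c_n = o(n)$ (the latter coming from the standing hypothesis), a local central limit theorem derived from Stirling's formula with second-order corrections shows that this Radon--Nikodym density converges to $1$ on the event where $S_{N_n} + c_n N_n/n$ is of order $\sqrt{N_n}$, and is dominated by a deterministic constant uniformly. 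Donsker's invariance principle applied to $S$, combined with the observation that the recentering $c_n N_n t/n$ matches the leading drift induced by the conditioning, then yields the marginal convergence of the first component.

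Second, for the joint convergence with the time-reversed process $\widehat{B}^n$, I would appeal to the Markov property of the bridge: conditionally on $(B^n_{N_n}, B^n_{2n-N_n})$, the initial segment $(B^n_j)_{0 \le j \le N_n}$ and the terminal segment $(B^n_j)_{2n-N_n \le j \le 2n}$ are independent bridges of length $N_n$, the latter being exactly (up to the deterministic shift) the reversal encoding $\widehat{B}^n$ on $[0,N_n]$. A bivariate local central limit theorem for $(S_{N_n}, S_{2n-N_n})$ shows that this pair is asymptotically Gaussian, centered at $(-c_n N_n/n, -2c_n + c_n N_n/n)$, with fluctuations of order $\sqrt{N_n}$ and correlation of order $N_n/n \to 0$. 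Hence after subtracting the drifts, the two endpoints become asymptotically independent centered Gaussians of variance $N_n$ on the Brownian scale; combining this with the marginal argument applied to each end separately yields the stated joint convergence to two independent Brownian motions.

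The main obstacle is to control the local central limit theorem uniformly in the relevant regime, in particular to show that the Radon--Nikodym density above is asymptotically close to $1$ despite the possibly large offset $c_n$. Since only a ratio of two transition probabilities enters, the leading term $-c_n^2/n$ cancels between numerator and denominator, and a second-order Taylor expansion based on Stirling's formula (as is standard for simple random walk bridges) is sufficient; the conditions $c_n = o(n)$ and $N_n = o(n)$ ensure that the remaining error terms vanish on the Brownian scale $\sqrt{N_n}$. Parity issues in Stirling's formula for $\pm1$ walks are handled routinely by restricting to integer values of the correct parity throughout.
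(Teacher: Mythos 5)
There is a genuine gap at the heart of your argument: the claim that the Radon--Nikodym density
$\P(S'_{2n-N_n}=-2c_n-S_{N_n})/\P(S_{2n}=-2c_n)$ converges to $1$ when $S$ is the \emph{symmetric} simple random walk. Expanding both local probabilities by Stirling, the leading term $-c_n^2/n$ in the exponent indeed cancels, but the cross term does not: on the event $S_{N_n}=-c_nN_n/n+x\sqrt{N_n}$ the log-density is asymptotic to $\tfrac12(c_n\sqrt{N_n}/n)^2-(c_n\sqrt{N_n}/n)\,x$, which vanishes only if $c_n\sqrt{N_n}/n\to0$. The hypotheses $c_n=o(n)$, $N_n=o(n)$ do not force this, and in the regime where the lemma is actually applied (Proposition~\ref{prop:cv_contour_brownien_drift}, $N_n\sim(\kappa n/c_n)^2$) one has $c_n\sqrt{N_n}/n\to\kappa>0$, so the density converges to the nontrivial Girsanov factor $\exp(\kappa^2/2-\kappa x)$. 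Your argument is also internally inconsistent on this point: if the density really tended to $1$, the conditioned walk would inherit the \emph{driftless} Donsker limit of $S$, and the recentering by $c_nN_nt/n$ would then shift the limit by $(c_n\sqrt{N_n}/n)\,t$, contradicting the statement. The drift of the bridge is produced precisely by the density \emph{not} being $1$; you cannot have both.

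The paper avoids this by changing the reference measure: it takes $S^n$ to be the \emph{asymmetric} walk with $\P(S^n_1=-1)=\tfrac12(1+c_n/n)$, so that $\E[S^n_{2n}]=-2c_n$ and the conditioning event is typical. With this exponential tilt the ratio of local probabilities is $\sim\sqrt{1/(n\pi)}\big/\sqrt{1/(n\pi)}\to1$ uniformly over endpoint fluctuations of order $\sqrt{N_n}=o(\sqrt n)$, and the unconditioned tilted walk, recentered by $+c_nN_nt/n$, converges to Brownian motion by a standard functional CLT. Your route could be repaired by carrying the Girsanov factor explicitly and checking uniform integrability, but as written the key step fails. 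Your reduction of the two-sided statement to a bivariate local CLT with vanishing correlation $O(N_n/n)$ is fine in spirit and matches the paper's single joint absolute-continuity identity, but it inherits the same defect.
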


\begin{proof}
Fix $T>0$ and suppose that $n$ is large enough so $N_n T < n$. Let $S^{n}$ denote the asymetric random walk with step distribution $\P(S^{n}_{1} = -1) = 1-\P(S^{n}_{1} = 1) = \frac{1}{2}(1+\frac{c_{n}}{n})$. Note that $B^{n}$ has the law of $S^{n}$ conditioned to $S^n_{2n} = -2c_{n}$ and further that $\E[S^{n}_{2n}] = -2c_{n}$. Let us denote by $\widehat{S}^{n}$ and independent copy of $S^{n}$.
Then the Markov property yields the following absolute continuity relation: For any continuous and bounded function $F$, we have
\begin{multline*}
\E\left[F\left(\left(\frac{1}{\sqrt{N_{n}}} \left(B^{n}_{N_{n}t}+\frac{c_{n} N_{n}}{n}t, \widehat{B}^{n}_{N_{n}t}+\frac{c_{n} N_{n}}{n}t\right)\right)_{0 \le t \le T}\right) \right]
\\
= \E\left[F\left(\left(\frac{1}{\sqrt{N_{n}}} \left(S^{n}_{N_{n}t}+\frac{c_{n} N_{n}}{n}t, \widehat{S}^{n}_{N_{n}t}+\frac{c_{n} N_{n}}{n}t\right)\right)_{0 \le t \le T}\right) \frac{\P(S^{n}_{2n-2N_{n}T} = -(S^{n}_{N_{n}T}+\widehat{S}^{n}_{N_{n}T}+2c_{n}))}{\P(S^{n}_{2n} = -2c_{n})}\right]
.\end{multline*}
We first claim that $(N_{n}^{-1/2} (S^{n}_{N_{n}t}+\frac{c_{n} N_{n}}{n}t, \widehat{S}^{n}_{N_{n}t}+\frac{c_{n} N_{n}}{n}t) ; 0 \le t \le T)$ converges in distribution to the pair $((\mathcal{B}_{t}, \widehat{\mathcal{B}}_{t}) ; 0 \le t \le T)$. By e.g.~\cite[Theorem~16.14]{Kal02} it suffices to prove the convergence at time $t=1$ and the latter easily follows by considering the characteristic function.
By Skorokhod's representation, let us assume that this convergence holds almost surely. We next control the ratio of probabilities in the absolute continuity relation.
By Stirling's formula, as $n\to\infty$, using also that $c_{n} = o(n)$ in the last line, we obtain after straightforward calculations:
\begin{align*}
\P(S^{n}_{2n} = -2c_{n}) 
&= \left(\frac{n+c_{n}}{2n}\right)^{n+c_{n}} \left(\frac{n-c_{n}}{2n}\right)^{n-c_{n}} \frac{(2n)!}{(n-c_{n})! (n+c_{n})!}
\\
&\sim \frac{1}{\sqrt{2\pi}} \sqrt{\frac{2n}{(n-c_{n}) (n+c_{n})}} \sim \frac{1}{\sqrt{n\pi}}
.\end{align*}
Similarly, since both $N_{n}, c_{n} = o(n)$, then
\[\P(S^{n}_{2n-2N_{n}T} = -(S^{n}_{N_{n}T}+\widehat{S}^{n}_{N_{n}T}+2c_{n}))
\sim \frac{1}{\sqrt{(n-N_{n}T)\pi}}
\sim \frac{1}{\sqrt{n\pi}}
.\]
We conclude by the above absolute continuity relation, together with the convergence of the unconditioned pair.
\end{proof}

Proposition~\ref{prop:cv_contour_brownien_drift} now easily follows.

\begin{proof}[Proof of Proposition~\ref{prop:cv_contour_brownien_drift}]
In this regime, Lemma~\ref{lem:cv_recentre_brownien} reads in the particular case $N_n \sim (\kappa n/c_{n})^2$:
\begin{equation}\label{eq:cv_brownien_drift}
\left(\frac{c_{n}}{\kappa n} \left(B^{n}_{(\kappa n/c_{n})^2 t}, \widehat{B}^{n}_{(\kappa n/c_{n})^2 t}\right)\right)_{t \ge 0}
\cvloi \left(\mathcal{B}_{t}-\kappa t, \widehat{\mathcal{B}}_{t}-\kappa t\right)_{t \ge 0}.
\end{equation}
The claim is then a consequence of the construction of $W^n$ and $\mathcal{W}^\kappa$ which is continuous in $\mathcal{B}^\kappa$.
\end{proof}

Note that Proposition~\ref{prop:cv_contour_brownien_drift} implies in particular the convergence of $(A^n, R^n)$ after rescaling towards the length of the first excursion of $\mathcal{W}^\kappa$ together with a random time, and this pair has the law $\kappa \overline{\mathbf{n}}^\kappa$ from~\eqref{eq:excursion_biaisee}. Let us give a proof by direct calculations for the reader uncomfortable with excursion theory.

\begin{proposition}[Size of the distinguished tree]\label{prop:volume_arbre_racine}
If $\sqrt{n} \ll c_n \ll n$, then for any $\kappa >0$,
\[\left(\frac{c_n}{\kappa n}\right)^2 \left(A^n, R^n\right) \cvloi (A, R),
\qquad\text{which has law}\qquad
\frac{\kappa}{\sqrt{2 \pi a^3}} \exp\left(- \frac{\kappa^2 a}{2}\right) 1_{0 \le r \le a} \d a \d r.\]
\end{proposition}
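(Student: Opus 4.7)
The plan is to follow the direct combinatorial route and avoid excursion theory, computing the joint law of $(A^n, R^n)$ explicitly and then applying Stirling's formula to identify the limiting density. First, Proposition~\ref{prop:loi_core_kernel} yields that, conditionally on the core having $c_n$ edges, the forest with its mark is uniform, so for any integer $k \ge 0$ and $r \in \{0, 1, \dots, 2k\}$,
\[
\P(A^n = 2k+1, R^n = r) = \frac{C_k \cdot F(n - c_n - k,\, 2c_n - 1)}{\binom{2n}{n + c_n}},
\]
where $C_k = \frac{1}{k+1}\binom{2k}{k}$ is the $k$-th Catalan number and $F(e, c) = \frac{c}{2e+c}\binom{2e+c}{e}$ counts ordered plane forests of $c$ trees with $e$ edges. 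Notice that this probability does not depend on $r$, reflecting the fact that $R^n$ is conditionally uniform on $\{0, \dots, A^n - 1\}$.

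The core step is then to apply Stirling's formula in the scaling $k \sim a(\kappa n/c_n)^2/2$ with $a > 0$ fixed. Rewriting $F(n - c_n - k, 2c_n - 1)$ as $\frac{2c_n - 1}{2n - 2k - 1}\binom{2n - 2k - 1}{n + c_n - k - 1}$ and using $C_k \sim 4^k/(\sqrt{\pi}k^{3/2})$, the analysis reduces to controlling the ratio of central binomials. Rather than estimating each separately (which via the naive local CLT would only work for $c_n = o(n^{2/3})$), one proceeds by telescoping: an elementary expansion gives
\[
\log\frac{\binom{2(m+1)}{(m+1)+c_n}}{\binom{2m}{m+c_n}} = \log 4 - \frac{1}{2(m+1)} + \frac{c_n^2}{(m+1)^2} + O\!\left(\frac{1}{m^2}\right) + O\!\left(\frac{c_n^4}{m^4}\right),
\]
and summing from $m = n-k-1$ to $m = n-1$, together with the relation $\binom{2M+1}{M+c_n} \sim 2\binom{2M}{M+c_n}$ valid for $c_n \ll M$, yields
\[
\frac{\binom{2n-2k-1}{n+c_n-k-1}}{\binom{2n}{n+c_n}} \sim \frac{1}{2}\cdot 4^{-k}\exp\!\left(-\frac{c_n^2 k}{n^2}\right).
\]
In the regime $\sqrt{n} \ll c_n \ll n$ and $k \sim (n/c_n)^2$, the error terms vanish since $k/n = O(n/c_n^2) = o(1)$ and $kc_n^4/n^4 = O(c_n^2/n^2) = o(1)$. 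Combining everything yields
\[
\P(A^n = 2k+1, R^n = r) \sim \frac{c_n}{2 n\sqrt{\pi}\, k^{3/2}}\exp\!\left(-\frac{c_n^2 k}{n^2}\right),
\]
uniformly for $k$ of the relevant order.

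Finally, one converts this to a continuous density. The rescaled variables $\hat A = (c_n/\kappa n)^2 A^n$ and $\hat R = (c_n/\kappa n)^2 R^n$ have respective spacings $2(c_n/\kappa n)^2$ and $(c_n/\kappa n)^2$, so the joint density at $(a, r)$ is obtained by multiplying the above by $(\kappa n/c_n)^4/2$. Substituting $k = a(\kappa n/c_n)^2/2$, so that $c_n^2 k/n^2 = \kappa^2 a/2$ and $k^{3/2} \sim a^{3/2}(\kappa n/c_n)^3/(2\sqrt{2})$, a direct cancellation yields the target density $\kappa (2\pi a^3)^{-1/2}\exp(-\kappa^2 a/2)\mathbf{1}_{0 \le r \le a}$. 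Convergence in distribution then follows from Scheffé's lemma, using that both the rescaled (piecewise constant) density and its limit are non-negative and integrate to $1$. The main obstacle is the Stirling analysis in the intermediate regime $c_n^2 k/n^2 = \Theta(1)$: because we allow $c_n$ to be much larger than $n^{2/3}$, the naive local CLT for $\binom{2m}{m + c_n}$ no longer gives a usable estimate, and the telescoping ratio argument is essential to exploit the cancellation between numerator and denominator.
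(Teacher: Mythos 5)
Your proposal is correct and follows essentially the same route as the paper's proof: an exact combinatorial formula for the law of $(A^n,R^n)$ via the cycle lemma (your $C_k\cdot F(n-c_n-k,2c_n-1)/\binom{2n}{n+c_n}$ is the same count as the paper's $\#W_{2k+1,1}\cdot\#W_{2n-2k-1,2c_n-1}/\binom{2n}{n+c_n}$, with the factor $2k+1$ accounted for by summing over $r$), followed by Stirling-type asymptotics and a local-to-global passage. The only difference is that you spell out the asymptotics via a telescoping of binomial ratios, which the paper leaves as ``straightforward calculations involving Stirling's formula''; your normalisations and the resulting density $\kappa(2\pi a^3)^{-1/2}\e^{-\kappa^2 a/2}\mathbf{1}_{0\le r\le a}$ check out.
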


\begin{proof}
Let $W_{N,K}$ denote the set of $\pm1$ paths of length $N$ that end by hitting $-K$ for the first time at time $N$, where $N$ and $K$ must have the same parity; by the cycle lemma, its cardinal equals $\#W_{N,K} = \frac{K}{N} \binom{N}{(N+K)/2}$. Then for every $k \in \{0, \dots n-c_{n}\}$, under our biased probability measure on $W_{2n, 2c_{n}}$, 
\[\P(A^n = 2k+1) = \frac{(2k+1) \#W_{2k+1,1} \cdot \#W_{2n-2k-1, 2c_{n}-1}}{\binom{2n}{n+c_{n}}}.\]
Then straightforward calculations involving Stirling's formula lead to
\[\left(\frac{\kappa n}{c_n}\right)^2 \P(A^n = 2\floor{a (\kappa n/c_n)^2}+1) 
\cv \frac{\kappa}{\sqrt{\pi a}} \exp\left(- \kappa^2 a\right)
,\]
for every $a>0$, which implies the convergence in distribution of $A^n$.
The joint convergence of $R^n$ follows since the latter is conditionally given $A^n$ uniformly distributed on $\{0, \dots, A^n-1\}$.
\end{proof}

\section{The mesoscopic scaling limit}
\label{sec:mesoscopic}

In this section we finally prove Theorem~\ref{thm:intermediate} involving the continuum tree-decorated trivalent map $ \mathfrak{F}_{ \mathrm{Unic}}$ or $ \mathfrak{F}_{ \mathrm{Plan}}$ for which we first describe two equivalent constructions. The starting block is the local limit of trivalent maps. 
In the planar case, this is the dual of the well-known UIPT of type 1, denoted by $ \mathbb{UIPT}^\dagger$ in Section~\ref{ssec:defaut_planar}. In the unicellular case, the local limit is the deterministic infinite three-regular tree $\mathbb{A}_3$ which appeared in Section~\ref{ssec:unicellularestimates}. 
In each case, we shall consider a slight modification of those maps obtained by splitting its root edge in two by inserting a vertex in the middle and grafting a dangling edge onto this new vertex in the face adjacent on the right of the root edge. Let us denote by $\mathbb{T}_ \mathrm{Plan}$ and $\mathbb{T}_{ \mathrm{Unic}}$ the resulting maps which thus have a unique vertex of degree $1$, and whose root edge is the oriented edge emanating from this vertex.

Throughout this section, to simplify notation we put
\[\kappa =  \sqrt{\frac{3}{2}}, \qquad \text{so that} \qquad  2 \kappa = \sqrt{6}.\]

\begin{figure}[!ht]\centering
\includegraphics[width=\linewidth]{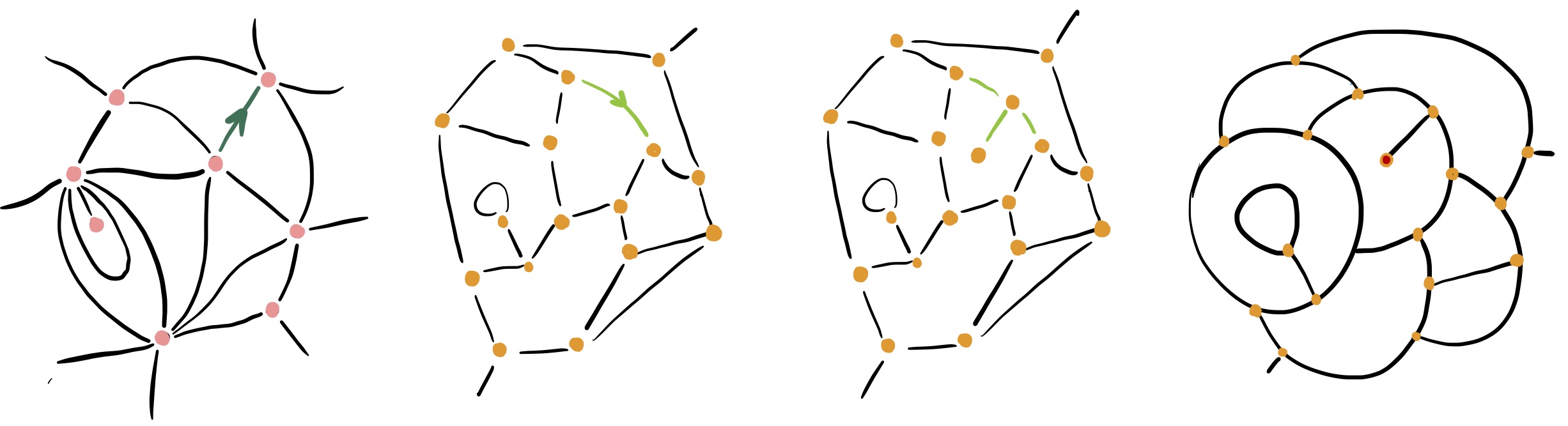}
\caption{The discrete skeleton underlying the construction of $\mathfrak{F}_{ \mathrm{Plan}}$. From left to right: a piece of the Uniform Infinite Planar Triangulation (type 1), its dual $ \mathbb{UIPT}^{\dagger}$, the version $ \mathbb{T}_{ \mathrm{Plan}}$ obtained after the root-transformation and finally the metric graph obtained after dilating each edge independently by an exponential variable of mean $1/\sqrt{6}$.}
\label{fig:squelettediscret}
\end{figure}

\subsection{Construction of the limit}
\label{ssec:construction_limit_meso}

Let $\Mapb_\infty$ denote an infinite, locally finite, map. Let us construct in two equivalent ways a certain metric space $ \mathfrak{F}$ from $\Mapb_\infty$. These constructions applied to $\Mapb_\infty = \mathbb{T}_ \mathrm{Plan}$ and to $\Mapb_\infty =  \mathbb{T}_\mathrm{Unic}$ respectively produce the limits $ \mathfrak{F}_{ \mathrm{Plan}}$ and $ \mathfrak{F}_{ \mathrm{Unic}}$ in Theorem~\ref{thm:intermediate}.
We assume that the reader is familiar with the background on continuum random trees (CRT's).
We shall denote by $\mathbf{e}$ a continuous excursion with duration $\zeta(\mathbf{e})$; it is known (see e.g.~Duquesne \& Le Gall~\cite{DLG05}) that it encodes a CRT $\Tree_{\mathbf{e}}$ by identifying all the pairs of times, say $0 < a < b < \zeta(\mathbf{e})$, which satisfy $\mathbf{e}_a = \mathbf{e}_b = \min_{[a,b]} \mathbf{e}$. We let $\pi_{\mathbf{e}}$ denote the canonical projection $[0, \zeta(\mathbf{e})] \to \Tree_{ \mathbf{e}}$.

\subsubsection{Via bipointed trees surgery}
\label{sssec:bipointed}

Recall from Section~\ref{sec:excursionBMD} the renormalised law $\kappa \overline{ \mathbf{n}}^{\kappa}$ on pairs $( \mathbf{e}, u)$ where $\mathbf{e}$ is a size-biased excursion of a Brownian motion with drift $-\kappa$ and then $u$ is an independent uniform random instant between $0$ and its duration $\zeta(\mathbf{e})$. 
Consider next the law $ \P^{\bullet}$ on bipointed CRT's obtained as the push forward of $\kappa \overline{ \mathbf{n}}^{\kappa}$ by the projection $\pi_{\mathbf{e}}$, or more precisely the map 
\[( \mathbf{e},u) \mapsto ( \Tree_{ \mathbf{e}}, \pi_{\mathbf{e}}(0), \pi_{\mathbf{e}}(u))
.\]
Let us note that by the rerooting property of Brownian CRT's (or more precisely, of Brownian excursions), the two triplets $(\Tree_{ \mathbf{e}}, \pi_{\mathbf{e}}(u), \pi_{\mathbf{e}}(0))$ and $(\Tree_{ \mathbf{e}}, \pi_{\mathbf{e}}(0), \pi_{\mathbf{e}}(u))$ have the same law. We then consider an i.i.d. sample from $\P^{\bullet}$ of bipointed CRT's indexed by the edges of $ \Mapb_\infty$ and we glue these CRT's using their distinguished points according to the adjacency relations of $ \Mapb_\infty$ to get a random locally compact metric space $ \mathfrak{F}$, see Figure~\ref{fig:constructionbis}. Formally, this random compact metric space is obtained by taking the disjoint union of the bipointed CRT's  indexed by the edges of $\Mapb_\infty$  and identifying their distinguished points  according to the adjacency relations of the graph $\Mapb_\infty$, the resulting quotient $ \mathfrak{F}$ is endowed with the quotient metric, see e.g.~\cite[Def.~3.1.12]{BBI01} or the recent paper~\cite{Mug19}.

\begin{figure}[!ht]\centering
\includegraphics[width=\linewidth]{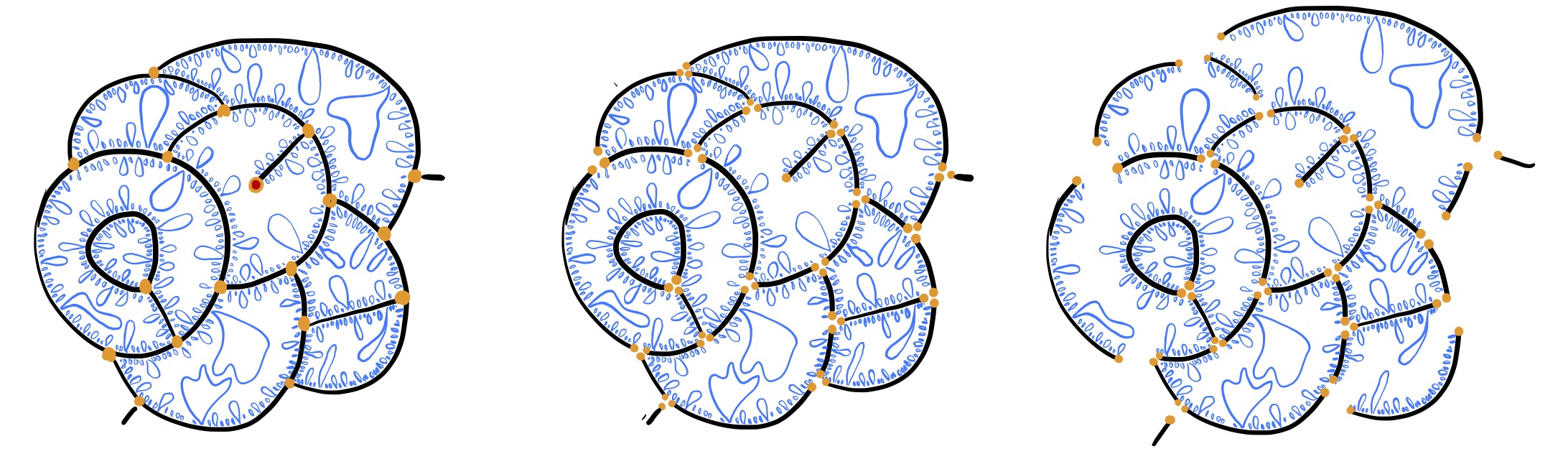}
\caption{The two equivalent constructions of $ \mathfrak{F}$ from the random metric graph $\mathbf{M}_\infty$.}
\label{fig:constructionbis}
\end{figure}

\subsubsection{Via Poissonian theory}
\label{sssec:Poisson_arbres}

Let us give an equivalent construction of $ \mathfrak{F}$ which highlights the connections with the core--kernel decomposition. First, consider the metric (or cable) graph $ \mathbf{M}_\infty$ obtained  by replacing independently each edge of $\Mapb_\infty$ by a compact segment of length distributed according to an exponential law of rate $ 2 \kappa$ (formally defined in the same way as above). This space has a natural Lebesgue measure $\ell$. 
We shall now graft random CRT's on this structure to get our desired space $ \mathfrak{F}$. To do this, consider the infinite measure $\mathbf{n}^{\kappa}$ on the space of pointed compact real trees equipped with the Gromov--Hausdorff topology, obtained by the push forward of the measure $ \mathbf{n}^{\kappa}$ by the application $ \mathbf{e} \mapsto ( \Tree_{ \mathbf{e}}, \pi_{\mathbf{e}}(0))$.
Although this is an infinite measure, the total mass of trees of diameter larger than $  \varepsilon>0$ is finite. We then consider a Poisson cloud on $ \mathbf{M}_\infty$ with intensity
\[2 \d \ell \otimes  \mathbf{n}^{\kappa}\]
and ``graft'' the trees on  $ \mathbf{M}_\infty$ according to the atoms of the measure. 

The fact that the two above constructions are equivalent follows from Bismut's description of the law $ \overline{ \mathbf{n}}^{\kappa}$ recalled in Section~\ref{sec:excursionBMD}. Indeed, once translated in the terms of random trees, this decomposition precisely entails that a random bipointed tree $( \Tree,x,y)$ under $\P^{\bullet}$ can be obtained by first sampling a real segment with a random length with the exponential law with parameter $2 \kappa$ whose endpoints will be the distinguished points of the bipointed tree and then grafting on it a Poisson cloud of trees with intensity $ 2\d \ell \otimes \mathbf{n}^{\kappa}$; the factor $2$ is here to take into account both sides of the segment.

\subsection{Proof of Theorem~\ref{thm:intermediate}}

We are now ready to prove Theorem~\ref{thm:intermediate}. This takes three main steps: first, since the kernel of the maps are almost trivalent, then as discussed in Section~\ref{sec:defauts_bis} it converges locally to the corresponding infinite trivalent map $\Mapb_\infty$. Next, the core is roughly obtained by expanding uniformly at random the edges of the kernel, which translates into i.i.d~exponential random lengths in the limit. Finally, the full map is obtained from the core by grafting trees on the corners, and this forest converges by the results in Section~\ref{sec:contour}.

\begin{proof}[Proof of Theorem~\ref{thm:intermediate}]
\textsc{Step 1: convergence of the kernel.}
By Theorem~\ref{thm:defauts_ker} the kernel of both maps are almost trivalent, in the sense that their defect number are small compared to $\spar_n$ with high probability. On this event, the local limits~\eqref{eq:convUIPT} and~\eqref{eq:convthree-regular} apply. By e.g.~Skorokhod's representation theorem, we shall assume that they hold almost surely and we denote by $\Mapb_n^{\spar_n}$ either the random plane map $\Map_{n}( \spar_{n},0)$ or the unicellular one $\Map_{n}(1, (\spar_{n}-1)/2)$, and by $\Mapb_\infty$ the limit of its kernel, which is $\mathbb{UIPT}^\dagger$ or $\mathbb{A}_3$ respectively. In particular, the kernel is asymptotically locally trivalent.
Fix $r \ge1$, then the ball of radius $r$ (centred at the root vertex) in $\Ker(\Mapb_n^{\spar_n})$ converges almost surely towards that of $\Mapb_\infty$. Since the set of possible such balls is finite, then for every $n$ large enough, the balls coincide and we henceforth assume it is the case. As in Section~\ref{sec:decomposition} we henceforth modify the kernel and $\Mapb_\infty$ by adding a vertex in the middle of its root edge, the corner on the right of the middle vertex is called hereafter the root corner. Note that this is not quite the root transform presented  in the beginning of Section~\ref{sec:mesoscopic}: here we do not graft a dangling leaf on this new vertex.
We let $\Ker_n$ denote the number of edges of the modified kernel.

\textsc{Step 2: convergence of the core.}
Let now $\Core(\Mapb_n^{\spar_n})$ denote the core of the map, with the same modification at the root as in \textsc{step 1} and let $\Core_n$ denote its number of edges.
Recall from Proposition~\ref{prop:loi_core_kernel} that, conditionally on the kernel and the size $\Core_n$, this core is obtained from the kernel by expanding the $\Ker_n$ edges using a uniform random vector of positive integers that sum up to $\Core_n$. Note that the root corner of the kernel is transferred to the core.
Using the representation of such a random vector as i.i.d~geometric random variables conditioned by their sum, where the parameter is arbitrary and can conveniently chosen so the mean matches the average value $\Core_n/\Ker_n$, it is easy to check that for any finite subset of edges of the kernel, $\Ker_n / \Core_n$ times their lengths in the core jointly converge in distribution towards i.i.d.~exponential random variables with unit mean. Alternatively, in the spirit of Section~\ref{sec:decomposition}, for any positive integers $\ell_1, \dots, \ell_k$, the conditional probability that $k$ given edges of the kernel have these lengths equals
\[\frac{\binom{\Core_n-(\ell_1+\dots+\ell_k)-1}{\Ker_n-k-1}}{\binom{\Core_n}{\Ker_n}}.\]
Stirling's approximation then yields a multivariate local form of the convergence to i.i.d.~exponential random variables. 
Fix $r \ge 1$ and let $k_r+1$ denote the number of edges in the ball of radius $r$ of $\Mapb_\infty$, which we assume equals that in $\Ker(\Mapb_n^{\spar_n})$ for $n$ large enough. Let further $L_{n,0}, \dots, L_{n,k_r}$ denote the lengths of these edges in the core. Since $\Ker_n / \Core_n \sim \sqrt{2\Ker_n/n} \sim \sqrt{6\spar_n/n}$ by Theorems~\ref{thm:defauts_ker} and~\ref{thm:aretes_core}, then
\begin{equation}\label{eq:convergence_longueurs_core}
\sqrt{\frac{\spar_n}{n}} \cdot \left(L_{n,0}, \dots, L_{n,k_r}\right) \cvloi (\gamma_0, \dots, \gamma_{k_r}),
\end{equation}
where the $\gamma_i$'s are i.i.d. exponential random variables with mean $1/\sqrt{6}$. Appealing to e.g.~Lebesgue's theorem, this convergence for the conditional law given $\Ker(\Mapb_n^{\spar_n})$ and $\Core_n$ also holds unconditionally, jointly with Theorem~\ref{thm:defauts_ker} and Theorem~\ref{thm:aretes_core}.

\textsc{Step 3: convergence of the trees.}
Next, recall that conditionally on its core, the map $\Mapb_n^{\spar_n}$ is obtained from its core by grafting a rooted plane tree (possibly with a single vertex) onto each corner of the latter. Moreover, the root edge of $\Mapb_n^{\spar_n}$ is either the root edge of the core, or one oriented edge in the tree grafted onto the root corner, hereafter call the ``root tree''.
Let us consider each edge of the kernel, and the corresponding chain of edges in the core, and, except for the root tree, let us group together all the trees grafter on the corners on one side of such a chain (say from one extremity to the other one) and then those on the other side. The root tree is canonically placed first.
Then this forest, together with the root edge, is sampled uniformly at random amongst all possibilities and it is coded by the first-passage path $W^n$ studied in Section~\ref{sec:contour}, with one distinguished time $R^n$ smaller than the first hitting time of $-1$. 

Then a direct consequence of Proposition~\ref{prop:cv_contour_brownien_drift} and Bismut's decomposition is that, conditionally on the kernel and the core of the map, $\sqrt{\spar_n/n}$ times the root tree and its mark converge to a bipointed Brownian tree with law $ \P^{\bullet}$ as defined in Section~\ref{sssec:bipointed}. Moreover for every $r \ge 1$, jointly with this convergence and that~\eqref{eq:convergence_longueurs_core} of the lengths of the chains in the core replacing the edges of the ball of radius $r$ in the kernel, the forests of the trees grafted on both sides of these chains jointly converge after the same rescaling by $\sqrt{\spar_n/n}$ to independent forests coded by Brownian motions with drift $\mathcal{B}^\kappa$ killed when first reaching level $-\gamma_i$ respectively, where we recall that we take $\kappa = \sqrt{3/2}$. Since these $\gamma_i$'s have the exponential law with rate $\sqrt{6} = 2\kappa$, then Bismut's decomposition (recall the discussion closing Section~\ref{sssec:Poisson_arbres}) entails that the bipointed trees obtained by grafting all the trees, except the root tree, in the corners of each chain in the core converge after rescaling by $\sqrt{\spar_n/n}$ to i.i.d~bipointed Brownian trees with law $ \P^{\bullet}$. 
Recall from Section~\ref{ssec:construction_limit_meso} and especially Figure~\ref{fig:squelettediscret} that there when constructing the limit $\mathfrak{F}$, we not only added a middle vertex on the root edge of $\Mapb_\infty$, but also attached to it a dandling leaf on the root corner and this edge was eventually replaced by a bipointed CRT in $\mathfrak{F}$. This CRT is the limit of the root tree and its mark here and again Bismut's decomposition shows the equivalence of the two points of views.

\textsc{Step 4: convergence of the map.}
Again, the previous conditional invariance principle is extended unconditionally by Lebesgue's theorem so we conclude that for any $r\ge 1$, the subset $\mathbb{B}_{n,r}^{\spar_n}$ of $\Mapb_n^{\spar_n}$ obtained by taking the ball of radius $r$ of its kernel and replacing its edges by their corresponding bipointed trees in $\Mapb_n^{\spar_n}$ converges in distribution, once rescaled by $\sqrt{\spar_n/n}$ to the ball of radius $r$ in $\Mapb_\infty$ where each edges is replaced by i.i.d. bipointed CRT's with law $\P^{\bullet}$, with the extra twist for the root. 
In order to conclude with the local Gromov--Hausdorff convergence of the map $\Mapb_n^{\spar_n}$ to $\mathfrak{F}$, it still remains to argue that for any fixed real value $R>0$, the ball of radius $R \sqrt{n/\spar_n}$ of this map is contained in $\mathbb{B}_{n,r}^{\spar_n}$ for some $r$. Indeed, it could a priori happen that, thanks to very short lengths, points which lie at a large graph distance from the root in the kernel get very close to the root in the core and then in the map.
Now recall that with high probability, the kernel of $\Mapb_n^{\spar_n}$ is locally trivalent so for every $r \ge 1$, its ball of radius $r$ contains at most $3^{r}$ distinct non self-intersecting paths. Then a crude large deviation argument shows that, if $\delta>0$ is small enough, then a.s.~for all sufficiently large $r$'s, none of these rescaled paths can have a total random length smaller than $\delta r$. Consequently with high probability, the ball of radius $R \sqrt{n/\spar_n}$ in $\Mapb_n^{\spar_n}$ is indeed entirely contained in $\mathbb{B}_{n,r}^{\spar_n}$ for $r \ge \delta^{-1} R$.
\end{proof}

\subsection{The tree regimes}
\label{ssec:limites_abres}

Let us end this section with the behaviour of the random map $\Mapb_n^{\spar_n}$ when seen at a smaller scale than $\sqrt{n/\spar_n}$, which complements Theorem~\ref{thm:intermediate}. As we have seen in the previous subsection, the tree grafted onto the core which carries the root edge of $\Mapb_n^{\spar_n}$ grows like $\sqrt{n/\spar_n}$ and so does the distance between the root vertex of the map and the core. Therefore, if one looks in a ball centred at the root vertex with a much smaller radius, then one does not escape this tree so we expect the maps to converge to trees at such scales. Let us describe more precisely these limits before stating the result.

Analogously to the compact Brownian CRT coded by a Brownian excursion, one can consider a Self-Similar Continuum Random Tree $\Tree_\infty$, coded by a two sided Brownian motion $(\mathcal{B}_t)_{t \in \R}$, i.e.~a random path such that $(\mathcal{B}_t)_{t \ge 0}$ and $(\mathcal{B}_{-t})_{t \ge 0}$ are two independent standard Brownian motions, see~\cite[Section~2.5]{Ald91c}. This tree is naturally pointed at the image of $0$ and it possesses a unique infinite line, corresponding to the first hitting time of a negative level by both Brownian motions; the excursions above their infimum of each of these paths code the subtrees grafted along this spine, on each side. Another, ``upward'', way of constructing $\Tree_\infty$ is to take instead two independent three-dimensional Bessel processes. The fact that this defines the same object in law comes from the so-called Pitman transform, which shows that such a Bessel process has the same law as $(\mathcal{B}_t - 2\inf_{[0,t]} \mathcal{B})_{t \ge 0}$, and the fact that the tree coded by the latter is the same as that coded by $(\mathcal{B}_t)_{t \ge 0}$ since one can easily that the corresponding pseudo-distances are equal. See also~\cite[Section~7.7.6]{Pit06}.

Finally, a discrete analogue, the Uniform Infinite Random Plane Tree $\mathbb{A}_\infty$, can be described as the discrete tree coded similarly by a two-sided simple random walk, or equivalently two independent such random walks conditioned to stay nonnegative. This infinite tree appears as local limit of large uniform random plane trees; it has one end and is sometimes referred to as Kesten's tree conditioned to survive (with the critical geometric distribution), see e.g.~\cite[Section~5]{Jan12}.

\begin{proposition}\label{prop:limites_arbres}
Let $\spar_n$ satisfy~\eqref{eq:standing}.
\begin{enumerate}
\item Both $\Map_{n}( \spar_{n},0)$ and $\Map_{n}(1, (\spar_{n}-1)/2)$ converge in distribution to $\mathbb{A}_\infty$ for the local topology.

\item For any sequence $a_n \to \infty$ such that $a_n = o(\sqrt{n/\spar_n})$, the two convergences in distribution
\[a_n^{-1}  \Map_{n}( \spar_{n},0) \cvloi \Tree_\infty
\qquad \text{and} \qquad 
a_n^{-1}  \Map_{n}\left(1,\frac{\spar_{n}-1}{2} \right) \cvloi \Tree_\infty\]
hold in the local pointed Gromov--Hausdorff topology.
\end{enumerate}
\end{proposition}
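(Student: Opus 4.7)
The strategy for both parts will exploit the observation that, at any scale much smaller than $\sqrt{n/\spar_n}$, the map $\Mapb_n^{\spar_n}$ coincides with the tree $T^n_1$ grafted in the distinguished corner of the core. Indeed, by Proposition~\ref{prop:loi_core_kernel}, the root edge of the map is either the root of the core (with conditional probability $1/(2t_1+1)$, where $t_1$ denotes the number of edges of $T^n_1$) or a uniformly chosen oriented edge of $T^n_1$. The size $t_1$ has the same distribution as $A^n$ in Proposition~\ref{prop:volume_arbre_racine}, so $(\spar_n/n)\, t_1$ converges in law to a positive random variable, and in particular $t_1 \to \infty$ in probability. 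Hence the root vertex of the map lies inside $T^n_1$ with probability tending to one; moreover, since $T^n_1$ is attached to the rest of the map only through its own root, to leave $T^n_1$ one must travel the graph distance $W^n_{R^n}$ to the root of $T^n_1$, which by Proposition~\ref{prop:cv_contour_brownien_drift} combined with the Bismut decomposition from Section~\ref{sec:excursionBMD} is of order $\sqrt{n/\spar_n}$ times an exponential random variable with rate $2\kappa$.

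For part~(1), conditionally on $t_1$, the tree $T^n_1$ is uniform over plane trees with $t_1$ edges, and the origin of the root of the map is a uniformly chosen corner. A classical local-limit result, in the spirit of Kesten (see for instance the discussion in~\cite{Jan12}), yields that such a uniformly rooted uniform plane tree converges in distribution to $\mathbb{A}_\infty$ for the local topology when the number of edges goes to infinity. Since $t_1 \to \infty$ in probability, this applies here. Finally, the ball of any fixed radius $r$ around the root of the map coincides with the same ball inside $T^n_1$ as soon as $r < W^n_{R^n}$, an event of probability tending to one since $W^n_{R^n} \to \infty$ in probability. This concludes the proof of the first convergence.

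For part~(2), we proceed similarly at scale $a_n$ and rely on the intermediate-scale tree convergence: for any sequences $N_n \to \infty$ and $b_n \to \infty$ with $b_n = o(\sqrt{N_n})$, a uniform plane tree with $N_n$ edges re-rooted at a uniform corner and rescaled by $1/b_n$ converges to the self-similar CRT $\Tree_\infty$ in the local pointed Gromov--Hausdorff topology. This follows by the same strategy as Lemma~\ref{lem:cv_recentre_brownien}, recentering the contour function at a uniform time and rescaling by $1/b_n$ in space and $1/b_n^2$ in time, which yields a two-sided Brownian motion in the limit and translates into Gromov--Hausdorff convergence of the associated trees to the tree coded by a two-sided Brownian motion, namely $\Tree_\infty$. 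Applying this conditionally on $t_1$, with $N_n = t_1 = \Theta(n/\spar_n)$ and $b_n = a_n = o(\sqrt{n/\spar_n})$, yields that $a_n^{-1} T^n_1$ converges to $\Tree_\infty$. The ball of rescaled radius $R$ in $\Mapb_n^{\spar_n}$ is contained in $T^n_1$ as soon as $R\, a_n < W^n_{R^n}$; since $W^n_{R^n}/\sqrt{n/\spar_n}$ converges to a positive exponential variable and $a_n = o(\sqrt{n/\spar_n})$, this condition holds with probability tending to one for any fixed $R$.

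The only delicate step is the quantitative control of the distance $W^n_{R^n}$ from the root of the map to the root of $T^n_1$, which must be shown to be of exact order $\sqrt{n/\spar_n}$ (with a nondegenerate positive limit after rescaling) in order to guarantee that neighborhoods of the relevant size do not escape $T^n_1$. This control is provided directly by Proposition~\ref{prop:cv_contour_brownien_drift} together with the Bismut description of the size-biased excursion measure $\overline{\mathbf{n}}^{\kappa}$, which identifies the marginal of the height at the distinguished time as the exponential distribution with rate $2\kappa = \sqrt{6}$, and in particular almost surely positive.
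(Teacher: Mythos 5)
Your proposal is correct and follows essentially the same route as the paper: identify the map near its root with the uniform tree carrying the root edge (re-rooted at the uniformly chosen oriented edge), invoke the known local and intermediate-scale limits of uniform plane trees (Kesten's tree $\mathbb{A}_\infty$, respectively the self-similar CRT via the recentred contour as in Lemma~\ref{lem:cv_recentre_brownien}), and control the escape distance from this tree by the height $W^n_{R^n}$, which Proposition~\ref{prop:cv_contour_brownien_drift} and Bismut's decomposition show to be of order $\sqrt{n/\spar_n}$ times an exponential variable. If anything, your treatment of the ball-containment step is more explicit than the paper's short proof, which leaves it implicit.
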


See~\cite{KM21a} for related results on local limits of planar \emph{graphs}.
Let us mention that $\Tree_\infty$ also appears in the scaling limit of uniform random quadrangulations with $n$ internal faces and with a boundary with length much larger than $\sqrt{n}$, see~\cite[Theorem~3.4]{BMR19}.

\begin{proof}
For a short proof, one can note from the proof of Proposition~\ref{prop:volume_arbre_racine} that, conditional on the number of edges of the tree grafted onto the core which carries the root edge of the map, this tree has the uniform distribution on plane trees with such a size, and further the oriented edge is independently sampled uniformly at random. Then re-root this tree at this oriented edge: the resulting tree has again the uniform distribution and the latter is known to converge when its size tends to infinity, see~\cite[Theorem~7.1]{Jan12} for the local convergence and~\cite[Section~2.5]{Ald91c}, with the formalism from~\cite{DLG05} for the local Gromov--Hausdorff one.
\end{proof}

The claim can alternatively be proved along the same lines as previously, we keep the same notation.
Indeed, instead of Equation~\eqref{eq:cv_brownien_drift} which was used previously, Lemma~\ref{lem:cv_recentre_brownien} shows that if $\sqrt{n} \ll c_n \ll n$ and if $N_{n} \to \infty$ is such that $N_{n} = o((n/c_{n})^2)$, then the drift disappears and we simply obtain,
\[\left(\frac{1}{\sqrt{N_{n}}} \left(B^{n}_{N_{n}t}, \widehat{B}^{n}_{N_{n}t}\right)\right)_{t \ge 0}
\cvloi \left(\mathcal{B}_{t}, \widehat{\mathcal{B}}_{t}\right)_{t \ge 0},\]
where $\mathcal{B}$ and $-\widehat{\mathcal{B}}$ are independent standard Brownian motions. 
For $r>0$ fixed, 
the concatenation of the these paths stoped when first reaching level $r$ encodes a tree in which $0$ is a distinguished time. 
The convergence of paths then implies the convergence of the corresponding bipointed trees; in particular, the ball of radius $r$ in $N_n^{-1/2} \Mapb_n^{\spar_n}$ converges to that of the image of $0$ in the preceding excursion, which is the ball of radius $r$ in $\Tree_\infty$.
We then applies this result to the random number of edges of the core, which by Theorem~\ref{thm:aretes_core} grows like $\sqrt{n\spar_n}$, so $N_n$ can be any sequence with $N_n = o(n/\spar_n)$.

Similarly, the proof of Lemma~\ref{lem:cv_recentre_brownien} is easily adapted to show that 
for any fixed $N \in \N$,
\[\big(B^{n}_{i}, \widehat{B}^{n}_{i}\big)_{0 \le i \le N} \cvloi \big(S_{i}, \widehat{S}_{i}\big)_{0 \le i \le N},\]
in $\R^{N+1}$, where $S$ and $\widehat{S}$ are two independent simple random walks. 
This similarly implies the convergence in distribution of the map in the local topology towards $\mathbb{A}_\infty$.

\section{Comments and questions}
\label{sec:comments}

Let us finish this paper by raising a few problems and open questions.

\subsection{Short cycles and diameter in the unicellular case}

As mentioned in the introduction, Janson \& Louf~\cite{JL21b} very recently proved that the statistics of the length of short cycles in $ \Map_{n}( 0,  \gen_{n})$ converge when $ 1 \ll \gen_n \ll n$ after normalisation by $ \sqrt{n/(12  \gen_{n})}$ towards an inhomogeneous Poisson process on $ \mathbb{R}_{+}$ with intensity
\begin{equation}\label{eq:cosht}
\frac{\cosh (t)-1}{t},
\end{equation}
and this matches the statistics of the lengths of short non-contractible curves in Weil--Petersson random surfaces~\cite{MP19}. Let us shed some light on these results using ours. We do not however claim to give a full proof. Heuristically we saw that $ \sqrt{12  \gen_{n}/n} \cdot \mathrm{Core}( \Map_{n}( 1,  \gen_n))$ is given by first taking an essentially unicellular trivalent map whose edges have been replaced by independent real segments of length distributed according to an exponential law of mean $1$.  It is classical that the statistics of cycles of length $k\geq 1$ in a random three-regular multi-graph (where loops and multiple edges are allowed) are given by independent Poisson variables with parameters
\[1,\enskip1,\enskip\frac{8}{6},\enskip \dots,\enskip \frac{2^k}{2k},\enskip \dots\]
see~\cite[Theorem~2.5]{Wor99}. Assuming this result still holds in the case of unicellular and essentially trivalent maps, we can guess that the statistics of the length of the short cycles in the rescaled map $ \sqrt{12  \gen_{n}/n} \cdot \mathrm{Core}( \Map_{n}( 1,  \gen_n))$  are obtained by taking the image of the above Poisson process on the number of discrete cycles, after replacing each cycle of length $k$ by an independent sum of $k$ random exponential variables. It is easy to check that the resulting point process on $ \mathbb{R}_+$ is Poisson with intensity given by~\eqref{eq:cosht}. Turning this sketch into a rigorous proof would open another path towards the result of~\cite{JL21b} and get access to more global quantities of sparse unicellular maps such as the diameter.

\subsection{Global limits in the planar case}
\label{ssec:TBM}

Let us here focus on the random plane maps  $\Map_n( \face_n, 0)$. Theorem~\ref{thm:intermediate} and Proposition~\ref{prop:limites_arbres} study their asymptotic behaviour at the scales $\sqrt{n/\face_n}$ and smaller.
In another direction, one can be interested in their asymptotic geometry at larger scales.

At least when $\face_n = o(n^{1/3})$, we know from Theorem~\ref{thm:defauts_ker} that $\Ker(\Map_n( \face_n, 0))$ is trivalent with probability tending to $1$, so by~\cite[Corollary~23]{CLG19}, once rescaled by a factor of order $\face_n^{1/4}$, it converges to the Brownian sphere. Combined with our previous argument (e.g.~in the proof of Theorem~\ref{thm:intermediate}), this indicates that $\face_n^{1/4} \sqrt{n/ \face_n} = (n^2/\face_n)^{1/4}$ is the correct scale of the core and we believe that it also converges to the Brownian sphere by arguments similar to~\cite{CLG19}. Finally, the original map $\Map_n( \face_n, 0)$ is obtained by grafting trees on the core, with $n$ edges in total, so the maximal diameter of such a tree grows like
$
\sqrt{n/\face_n} = o((n^2/\face_n)^{1/4})$ and therefore the rescaled map and its core should be close to each other. 
We refrain to make this precise here for we believe that this holds in a more general setting.

\begin{conjecture}\label{conjecture}
If $\face_n \to \infty$ and $n^{-1} \face_n \to 0$, then the rescaled maps
\[\sqrt{6} \cdot \frac{3^{1/4}}{1+2\sqrt{3}} \cdot \left(\frac{\face_n}{n^2}\right)^{1/4} \Map_n( \face_n, 0)\]
converge in distribution to the Brownian sphere in the Gromov--Hausdorff topology.
\end{conjecture}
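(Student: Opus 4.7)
The plan is to follow the nested decomposition of $\Map_n(\face_n,0)$ into kernel, core, and map, and to establish Brownian-sphere convergence at each level. Three ingredients are needed: (i) the kernel converges to the Brownian sphere at scale $\face_n^{1/4}$; (ii) distances in the core are essentially $C_n/K_n \sim \sqrt{n/(6\face_n)}$ times distances in the kernel, so the core converges at scale $(n^2/\face_n)^{1/4}$; (iii) the trees grafted at the corners of the core have maximum diameter much smaller than $(n^2/\face_n)^{1/4}$, so the map and its core are close in the Gromov--Hausdorff sense at that scale. Chaining these, the rescaled map inherits the Brownian-sphere limit of the rescaled core, with scaling constant $\sqrt{6}\cdot c_1 (\face_n/n^2)^{1/4}$ where $c_1$ is the usual scaling constant for triangulations.

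For (i), when $\face_n = o(n^{1/3})$ Theorem~\ref{thm:defauts_ker} guarantees that $\Ker(\Map_n(\face_n,0))$ is trivalent with high probability; passing to the dual type-1 triangulation with $\face_n$ vertices, convergence to the Brownian sphere at scale $c_1 \face_n^{-1/4}$ with $c_1 = 3^{1/4}/(1+2\sqrt{3})$ follows from~\cite[Corollary~23]{CLG19}. For larger $\face_n$ with $\face_n = o(n)$, Theorem~\ref{thm:defauts_ker} only yields $o(\face_n)$ defects, so one would instead invoke the essentially-trivalent extension of~\cite{Bud21}. For (ii), Proposition~\ref{prop:loi_core_kernel} identifies the conditional law of the chain lengths in the core as the entries of a uniform composition of $C_n$ into $K_n$ positive parts; these are concentrated around $C_n/K_n$ with Gaussian fluctuations of order $\sqrt{C_n/K_n}$. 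Summing along a kernel geodesic of combinatorial length $L$ produces a chain in the core of length $(C_n/K_n) L + O(\sqrt{L \cdot C_n/K_n})$, and since typical kernel distances are of order $\face_n^{1/4}$ the relative error is $o(1)$ on the relevant scale. For (iii), conditionally on the core the $2C_n$ trees at its corners form a uniform forest of total size $n-C_n$; since a uniform plane tree of size $s$ has diameter of order $\sqrt{s}$ with sub-Gaussian tails, a union bound yields a maximum tree diameter of order $\sqrt{n/\face_n} \cdot \sqrt{\log n}$, which is $o((n^2/\face_n)^{1/4})$ exactly because $\face_n \to \infty$.

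The principal obstacle is to make step (ii) into a genuine Gromov--Hausdorff statement: pointwise CLTs along a fixed kernel geodesic do not suffice, because a geodesic in the core could in principle exploit abnormally short chains and deviate from the image of every kernel geodesic, so that $d_{\Core}$ could fail to be close to $(C_n/K_n) \cdot d_{\Ker}$ at the scale of interest. What is needed is a uniform comparison between the core and the ``cable'' version of the kernel obtained by replacing each of its edges by an independent segment of random length. A natural route is to prove a joint scaling limit of the pair $(\Ker,\Core)$ and to verify that the Brownian-sphere limit of the kernel is insensitive to dilating each edge by an i.i.d. length concentrated around the same mean. An alternative, arguably cleaner path would be to adapt directly the encoding arguments of~\cite{CLG19,Mar19} to the core, viewed as a Boltzmann-type plane map with edge weights that reward the presence of long chains of degree-two vertices; this has the additional benefit of sidestepping the need for a separate extension of the essentially-trivalent convergence when $\face_n$ exceeds $n^{1/3}$.
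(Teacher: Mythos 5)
First, note that the statement you are trying to prove appears in the paper as Conjecture~\ref{conjecture}, not as a theorem: the authors explicitly decline to prove it, and your proposal follows exactly the heuristic they sketch in Section~\ref{ssec:TBM} (kernel $\to$ Brownian sphere at scale $\face_n^{1/4}$, core $=$ kernel dilated by $\sim C_n/K_n$, grafted trees of diameter $o((n^2/\face_n)^{1/4})$). As a proof, however, it has two genuine gaps, both of which you partially acknowledge and neither of which you close. The first is step (i) when $\face_n$ is not $o(n^{1/3})$: by Theorem~\ref{thm:defauts_ker} the kernel then has a defect number of order $\sqrt{\spar_n^3/n}\to\infty$, so it is \emph{not} trivalent and \cite[Corollary~23]{CLG19} does not apply. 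The reference~\cite{Bud21} that you propose to ``invoke instead'' is a \emph{local} limit result (convergence to $\mathbb{UIPT}^\dagger$ in the local topology); it says nothing about Gromov--Hausdorff scaling limits. The paper itself identifies precisely this missing ingredient --- convergence of essentially trivalent planar maps to the Brownian sphere after rescaling by the fourth root of the number of edges --- as ``the first step towards a proof'' of the conjecture, i.e.\ as an open problem, and your proposal does not supply it.

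The second gap is the one you yourself call ``the principal obstacle'': upgrading the edgewise comparison between core and kernel distances to a uniform Gromov--Hausdorff statement. Your concern is exactly right --- a geodesic in the core may exploit atypically short chains, so concentration of individual chain lengths around $C_n/K_n$ together with a CLT along a fixed kernel geodesic does not control the Gromov--Hausdorff distance between the rescaled core and the rescaled kernel. But you then only gesture at two possible strategies (prove a joint scaling limit of the pair $(\Ker,\Core)$ and verify that the Brownian-sphere limit is insensitive to i.i.d.\ edge dilations, or adapt the encodings of~\cite{CLG19,Mar19} to the core) without executing either; the insensitivity statement is itself a nontrivial result of first-passage-percolation type, comparable in difficulty to the main theorems of~\cite{CLG19}. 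Step (iii) is fine, and is essentially the argument used in Step~4 of the proof of Theorem~\ref{thm:intermediate} (there to bound path lengths from below, here in the reverse direction). In summary, what you have is a correct and well-organised reduction of the conjecture to two open ingredients, which matches the authors' own assessment of the situation --- but it is not a proof.
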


Similarly, for any sequence $\sigma_n$ such that $\sqrt{n/\face_n} \ll \sigma_n \ll (n^2/\face_n)^{1/4}$, we expect the rescaled random map $\sigma_n^{-1} \Map_n(\face_{n},0)$ to converge in distribution for the pointed local Gromov--Hausdorff topology towards the Brownian \emph{plane}, which is a non-compact analogue of the Brownian sphere introduced in~\cite{CLG14}.

The first step towards a proof of Conjecture~\ref{conjecture} would be to prove the convergence of random trivalent maps with a small defect number compared to the number of edges to the Brownian sphere, once rescaled by the fourth root of the number of edges. This would complement the local point of view of~\cite{Bud21}. 
Let us mention that, because we were especially interested in the mesoscopic behaviour of the maps, we assumed throughout this work that $\face_n = o(n)$, whereas in~\cite{KM21b}, convergence of uniformly chosen \emph{bipartite} maps to the Brownian sphere is shown whenever $\face_n, n-\face_n \to \infty$. More generally, establishing the scaling and local limits of planar maps with prescribed degrees is still open in the non-bipartite case. See~\cite{Mar19} for the scaling limit point of view and~\cite{BL20} for the local limit point of view, both in the bipartite case. Let us mention that the case of local limit of Boltzmann non-bipartite maps  is treated in~\cite{Ste18}.

\addcontentsline{toc}{section}{Bibliography}

\end{document}